\documentclass[reqno]{amsart}
\usepackage{amssymb,amsmath}
\usepackage{amsthm}
\usepackage{color,graphicx}
\usepackage{hyperref}


\newcommand{\R}{\mathbb R}
\newcommand{\N}{{\mathbb N}}

\newcommand{\Z}{\mathbb Z}

\newcommand{\sgn}{\text{sgn}}


\newtheorem{theorem}{Theorem}[section]
\newtheorem{proposition}[theorem]{Proposition}
\newtheorem{remark}[theorem]{Remark}
\newtheorem{lemma}[theorem]{Lemma}

\newtheorem{definition}[theorem]{Definition}

\begin{document}
\vglue-1cm \hskip1cm
\title[The Benjamin-Ono-Zakharov-Kuznetsov equation]{The IVP for the Benjamin-Ono-Zakharov-Kuznetsov equation in weighted Sobolev spaces}



\author[A. Cunha]{Alysson Cunha}
\address{IMECC-UNICAMP, Rua S\'ergio Buarque de Holanda, 651, 13083-859, Cam\-pi\-nas-SP, Bra\-zil.}
\email{apastor@ime.unicamp.br}

\author[A. Pastor]{Ademir Pastor}
\address{IMECC-UNICAMP, Rua S\'ergio Buarque de Holanda, 651, 13083-859, Cam\-pi\-nas-SP, Bra\-zil}
\email{ra030382@ime.unicamp.br}

\subjclass[2010]{Primary 35A01, 35Q53 ; Secondary 35Q35}

\keywords{BO-ZK equation, Cauchy problem, Local well-posedness, Persistence}

\begin{abstract}
In this paper we study  the initial-value problem associated with the
Benjamin-Ono-Zakharov-Kuznetsov equation.  We prove that the IVP for such
equation is locally well-posed in the usual Sobolev spaces $H^{s}(\R^2),$
$s>2$, and in the anisotropic spaces $H^{s_1,s_2}(\R^2)$, $s_2>2$, $s_1\geq
s_2$. We also study the persistence properties of the solution and local
well-posedness in the weighted Sobolev class
$$
\mathcal{Z}_{s,r}=H^{s}(\R^{2})\cap L^{2}((1+x^{2} +y^{2})^rdxdy),
$$
where $s>2$, $r\geq 0$, and $s\geq 2r$. Unique continuation properties of the
solution are also established. These continuation principles show that our
persistence properties are sharp. Most of our arguments are accomplished
taking into account that ones for the Benjamin-Ono equation.
\end{abstract}

\maketitle

\section{Introduction}\label{introduction}

This paper is concerned with the initial-value problem (IVP) for the
Benjamin-Ono-Zakharov-Kuznetsov (BO-ZK) equation
\begin{equation}\label{bozk}
\begin{cases}
u_{t}+\mathcal{H}\partial_{x}^{2}u+u_{xyy}+uu_{x}=0, \;\;(x,y)\in\R^2, \;t>0, \\
u(x,y,0)=\phi(x,y),
\end{cases}
\end{equation}
where $u=u(x,y,t)$ is a real-valued function and $\mathcal{H}$ stands for the
Hilbert transform defined as
$$
\mathcal{H}u(x,y,t)=\mathrm{p.v.}\frac{1}{\pi}\int_{\R}\frac{u(z,y,t)}{x-z}dz.
$$
Recall that $\mathrm{p.v.}$ denotes the Cauchy principal value.

The BO-ZK equation was recently introduced in \cite{Jorge} and
\cite{Latorre}, and it has applications to electromigration in thin
nanoconductors on a dielectric substrate. It may also be viewed as a natural
two-dimensional generalization of the  Benjamin-Ono equation
\begin{equation}\label{boequation}
u_{t}+\mathcal{H}\partial_{x}^{2}u+uu_{x}=0, \;\;x\in\R, \;t>0.
\end{equation}

Throughout the paper,   well-posedness is understood  in Kato's sense, that
is, it includes existence, uniqueness, persistency property, and  continuous
dependence of the map data-solution.

Before stating our main theorems, let us recall some previous results
concerning the problem \eqref{bozk}. In \cite{EP} and \cite{EP1}, the authors
studied existence and stability of solitary waves solutions having the form
$u(x,y,t)=\varphi_c(x-ct,y)$, where $c$ is a real parameter and $\varphi_c$
is smooth and decays to zero at infinity. By using the variational approach
introduced by Cazenave and Lions \cite{CL}, they proved, in particular, the
orbital stability of ground state solutions. Unique continuation properties
was addressed in \cite{EP3}, where the authors showed if a sufficiently
smooth solution has support in a rectangle, for all $t$ as long as the
solution exists, then it must vanish everywhere.

The IVP \eqref{bozk} has similar features as the one for the BO equation
\eqref{boequation}. Indeed, following the ideas in \cite{MST}, the authors in
\cite{EP2} established the ill-posedness of \eqref{bozk}  in the sense that
it cannot be solved in the usual (anisotropic) $L^2$-based Sobolev space by
using a fixed point theorem. More precisely, for all $s\in\R$, the map
data-solution cannot be $C^2$-differentiable at the origin from $H^s(\R^2)$
to $H^s(\R^2)$.

Let us now turn attention to the results in the present paper. We start with
the following.

\begin{theorem}\label{120}
Let $s>2$. Then for any $\phi\in H^{s}(\R^2),$ there exist a positive
$T=T(\|\phi\|_{H^{s}})$  and a unique solution $u\in C([0,T];H^{s}(\R^2))$ of
the $\mathrm{IVP}$ \eqref{bozk}. Furthermore, the flow-map $\phi\mapsto u(t)$
is continuous in the $H^{s}$-norm and there exists a function $\rho\in
C([0,T];\R)$ such that
$$
\|u(t)\|^2_{H^s}\leq \rho(t), \qquad t\in [0,T].
$$
\end{theorem}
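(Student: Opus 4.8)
The plan is to prove local well-posedness for this quasilinear dispersive equation via the classical parabolic-regularization (or artificial-viscosity) method, combined with energy estimates in $H^s$. The key obstacle, which dictates the whole strategy, is the term $\mathcal{H}\partial_x^2 u$: because the Benjamin-Ono-type dispersion is only of order one higher than the nonlinearity $uu_x$ but carries the nonlocal operator $\mathcal{H}\partial_x^2$, the equation cannot be solved by a contraction-mapping/fixed-point argument (as the authors recall, the data-to-solution map fails to be even $C^2$). Hence I would not attempt Duhamel plus Banach fixed point; instead I would construct solutions by compactness and recover uniqueness and continuous dependence separately through energy methods.

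First I would regularize the equation, replacing it by
\begin{equation*}
u_t+\mathcal{H}\partial_x^2 u+u_{xyy}+uu_x=\mu\,\Delta u,\qquad u(0)=\phi,
\end{equation*}
for $\mu>0$, where $\Delta=\partial_x^2+\partial_y^2$. For fixed $\mu>0$ this is a semilinear parabolic problem whose linear part generates an analytic semigroup smoothing in both variables, so standard Duhamel/fixed-point arguments give, for each $\mu$, a local solution $u^\mu\in C([0,T_\mu];H^s)$. The heart of the matter is then to derive $H^s$ energy estimates that are \emph{uniform in} $\mu$, so that $T_\mu$ can be taken bounded below by a single $T=T(\|\phi\|_{H^s})$. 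To do this I apply $J^s=(1-\Delta)^{s/2}$ (or an equivalent anisotropic multiplier), pair with $J^s u^\mu$ in $L^2$, and estimate $\frac{d}{dt}\|u^\mu\|_{H^s}^2$. The dispersive terms $\mathcal{H}\partial_x^2 u$ and $u_{xyy}$ are skew-adjoint, so they contribute nothing to the energy; the viscosity term $\mu\Delta u$ contributes a good (negative) term $-2\mu\|\nabla J^s u^\mu\|_{L^2}^2$ that can simply be discarded. The only dangerous contribution is the nonlinearity, where one must control $\langle J^s(uu_x),J^s u\rangle$.

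The central estimate is therefore the commutator/Kato-Ponce bound: writing $J^s(uu_x)-uJ^s u_x$ and using the Kato-Ponce commutator estimate together with integration by parts on the symmetric piece $\langle uJ^s u_x,J^s u\rangle=-\tfrac12\langle u_x J^su,J^su\rangle$, one obtains
\begin{equation*}
\Big|\frac{d}{dt}\|u^\mu\|_{H^s}^2\Big|\lesssim \|\nabla u^\mu\|_{L^\infty}\,\|u^\mu\|_{H^s}^2,
\end{equation*}
and since $s>2$ the Sobolev embedding $H^{s-1}(\R^2)\hookrightarrow L^\infty$ (valid because $s-1>1$) controls $\|\nabla u^\mu\|_{L^\infty}\lesssim\|u^\mu\|_{H^s}$. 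This gives $\frac{d}{dt}\|u^\mu\|_{H^s}^2\lesssim\|u^\mu\|_{H^s}^3$, a Riccati-type differential inequality whose solution stays bounded on a time interval $[0,T]$ depending only on $\|\phi\|_{H^s}$, uniformly in $\mu$. The requirement $s>2$ is exactly what makes this embedding work in two dimensions, and I expect this energy estimate to be the main technical obstacle, since it is where the quasilinear nature and the sharp regularity threshold enter.

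Finally I would pass to the limit $\mu\to0$. The uniform bounds give a subsequence $u^\mu$ converging weakly-$*$ in $L^\infty([0,T];H^s)$; to upgrade this to a genuine solution I establish that $\{u^\mu\}$ is Cauchy in the lower norm $C([0,T];L^2)$ (or $H^{s'}$, $s'<s$) by an energy estimate on the difference of two solutions, which loses one derivative but is controlled by the uniform $H^s$ bounds via interpolation; Bona-Smith-type arguments then give strong convergence and continuity in time, yielding $u\in C([0,T];H^s)$. Uniqueness follows from the same difference estimate in $L^2$ and Gronwall, and continuous dependence of the flow map likewise follows from the Bona-Smith technique of approximating data by smooth data. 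The existence of the continuous bound $\rho(t)$ is immediate from the Riccati inequality above, taking $\rho(t)$ to be the corresponding solution of the comparison ODE.
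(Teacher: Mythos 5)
Your proposal is correct and follows essentially the same route as the paper: parabolic regularization $u_t+\mathcal{H}\partial_x^2u+u_{xyy}+uu_x=\mu\Delta u$, a contraction/Duhamel argument using the smoothing of the regularized semigroup to solve for each fixed $\mu>0$, a $\mu$-independent energy estimate obtained from the Kato--Ponce commutator bound plus integration by parts on the symmetric piece (with the skew-adjoint dispersive terms dropping out and $s>2$ entering exactly through the embedding controlling $\|\nabla u\|_{L^\infty}$), and finally the limit $\mu\downarrow 0$ with Bona--Smith approximation for continuous dependence. The only cosmetic difference is that the paper carries out the details in the anisotropic spaces $H^{s_1,s_2}$ (Theorem 1.2) and remarks that the isotropic case is analogous, whereas you argue directly in $H^s$.
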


With respect to anisotropic Sobolev spaces, we have the following.

\begin{theorem}\label{105}
Let $\phi \in H^{s_1,s_2}(\R^2),$ where $s_2>2$ and $s_1\geq s_2$. Then there
exist
 $T=T(\|\phi\|_{s_1, s_2})$ and a unique solution $u\in C([0,T];H^{s_1,
s_2}(\R^2))$ of the $\mathrm{IVP}$ \eqref{bozk}. Furthermore, the flow-map
$\phi\mapsto u(t)$ is continuous in the $H^{s_1,s_2}$-norm and there exists a
function $\rho\in C([0,T];\R)$ such that
\begin{equation}\label{rhoestinteo}
\|u(t)\|^2_{s_1,s_2}\leq \rho(t), \qquad t\in [0,T].
\end{equation}
\end{theorem}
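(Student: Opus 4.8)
The plan is to prove Theorem~\ref{105} by the same energy-method/parabolic-regularization scheme that underlies Theorem~\ref{120}, but carried out in the anisotropic norm $\|\cdot\|_{s_1,s_2}$. First I would recall that the linear part of \eqref{bozk} is skew-adjoint: the symbol of $\mathcal{H}\partial_x^2 u+u_{xyy}$ is purely imaginary, namely $i(|\xi|\xi-\xi\eta^2)$ in Fourier variables $(\xi,\eta)$. Hence the linear group $\{U(t)\}$ generated by it is unitary on every $H^{s_1,s_2}(\R^2)$, and the only genuine nonlinearity to control is the quadratic term $uu_x=\tfrac12\partial_x(u^2)$. I would therefore regularize \eqref{bozk} by adding a dissipative term $-\mu\Delta u$ (or $\mu\Delta^2 u$) with $\mu>0$, solve the regularized problem by Duhamel and a contraction argument in $C([0,T_\mu];H^{s_1,s_2})$, and then derive $\mu$-uniform a priori estimates that let $\mu\to 0^+$.

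The core is the a priori estimate. Applying the anisotropic Bessel-type weight $\langle\xi\rangle^{2s_1}$-plus-$\langle\eta\rangle^{2s_2}$ operator, multiplying the equation by the corresponding power of $u$ and integrating, the skew-adjoint linear terms drop out, so that
\begin{equation}\label{apriori-plan}
\frac{d}{dt}\|u(t)\|_{s_1,s_2}^2 = -\,(\text{nonlinear commutator terms coming from }\partial_x(u^2)).
\end{equation}
The heart of the matter is to bound the right-hand side of \eqref{apriori-plan} by $C(\|u\|_{s_1,s_2})\|u\|_{s_1,s_2}^2$, which by Gronwall yields a lifespan $T$ and the bound \eqref{rhoestinteo} with $\rho$ obtained by integrating the resulting differential inequality. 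To do this I would use the Kato–Ponce anisotropic commutator and fractional-Leibniz estimates to move the weights onto the factors of $u^2$, controlling the top-order pieces by $\|u_x\|_{L^\infty}$ (equivalently $\|u\|_{L^\infty}$ after integrating by parts) times $\|u\|_{s_1,s_2}^2$, and closing the $L^\infty$ factor through the anisotropic Sobolev embedding, which is exactly where the hypotheses $s_2>2$ and $s_1\geq s_2$ enter: they guarantee $H^{s_1,s_2}(\R^2)\hookrightarrow L^\infty(\R^2)$ with control of $\partial_x$.

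The main obstacle I anticipate is precisely the commutator estimate in the anisotropic setting: unlike the isotropic case, the weight is not a single power of $\langle(\xi,\eta)\rangle$, so the standard Kato–Ponce lemma does not apply verbatim and one must split into the $\langle\xi\rangle^{s_1}$ and $\langle\eta\rangle^{s_2}$ contributions and handle the mixed terms by hand, which is the reason the condition $s_1\geq s_2$ is imposed (it lets one dominate the $\eta$-weight by the $\xi$-weight on the worst interaction). Once the a priori bound is in place, uniqueness follows by an $L^2$-energy estimate for the difference of two solutions, and continuous dependence of the flow-map follows by the Bona–Smith argument: approximate $\phi$ by smooth data $\phi^\epsilon$, use the uniform bounds to pass to the limit, and verify continuity in the strong $H^{s_1,s_2}$-norm rather than merely weak-$*$ convergence. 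These final steps are routine given the a priori estimate and parallel the proof of Theorem~\ref{120}, so I would state them briefly and concentrate the exposition on \eqref{apriori-plan} and the anisotropic commutator bound.
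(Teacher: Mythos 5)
Your proposal follows essentially the same route as the paper's proof: parabolic regularization by $\mu\Delta u$, a contraction argument for the regularized problem via Duhamel and the smoothing of the dissipative semigroup, the key a priori bound $|(u,uu_x)_{s_1,s_2}|\leq c\|u\|_{s_1,s_2}^{3}$ obtained by splitting the anisotropic norm into the $J_x^{s_1}$ and $J_y^{s_2}$ pieces and applying Kato--Ponce commutator estimates one variable at a time (with Young's inequality on the symbols being exactly where $s_1\geq s_2$ enters, as you anticipated), then Gronwall for a $\mu$-uniform lifespan and the function $\rho$, passage to the limit $\mu\to0$, and Bona--Smith for continuous dependence. This is precisely the structure of Section 3 of the paper, so your plan is correct and not a genuinely different argument.
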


Theorems \ref{120} and \ref{105} are proved by using the {\em parabolic
regularization method}. Since their proofs are quite similar we only prove
Theorem \ref{105}. It should be noted that this technique does not rely on
the dispersive effects of the equation in question. Thus, improvements of the
above results should consider such effects.

In comparison with the BO equation \eqref{boequation}, many authors, in
\cite{BP}, \cite{IK}, \cite{MP}, and \cite{tao} for instance, using
appropriated gauge transformations, have obtained strong results of local
well-posedness in low regularity Sobolev spaces. In the case of BO-ZK
equation, it is not clear how to get a suitable transformation and we do not
know if such approach could be used to improve Theorems \ref{120} and
\ref{105}. On the other hand, we believe that the above theorems can be
improved by employing the techniques introduced by \cite{KK} and \cite{KT},
which combines Strichartz estimates with some energy estimates. This will
appear elsewhere.

Our main focus in this paper consists in proving persistence properties and
local well-posedness in weighted Sobolev spaces. The question we address is
the following: suppose we have an initial data in the Sobolev space
$H^s(\R^2)$ with some ``additional'' decay at infinity; is it true that the
solution inherits the same decay?

For the BO equation, this question has been addressed for instance in
\cite{GermanPonce}, \cite{FLP}, \cite{Iorio}, and  \cite{Iorio1} and the
answer produces very interesting results. In particular, there exist no
nontrivial solutions with strong decay.

 Our first result in this direction is concerned with the persistence and the local well-posedness in
 the weighted Sobolev spaces $H^s(w^2)$ (see notation below).

 \begin{theorem}\label{H}
Let $w$ be a smooth weight with all its first, second, and third derivatives
bounded. Then, the IVP \eqref{bozk} is locally well-posed in $H^s(w^2)$,
$s>2$.
 \end{theorem}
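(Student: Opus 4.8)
The plan is to combine the local well-posedness in $H^s(\R^2)$ from Theorem \ref{120} with a weighted energy estimate, running a single iteration scheme in the intersection space $H^s(w^2) = H^s(\R^2)\cap L^2(w^2\,dxdy)$. The idea is that the $H^s$ theory already furnishes a solution $u\in C([0,T];H^s(\R^2))$ with a controlled norm via the function $\rho$; what remains is to propagate the weighted $L^2(w^2)$ part of the norm along the same time interval. Because the weight $w$ and its derivatives up to third order are bounded, I expect to avoid the subtle algebraic-decay obstructions that plague the polynomial weights $|x|^r$ and can keep everything at the level of bounded multipliers.

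**First I would** derive the basic weighted a priori estimate. Multiplying \eqref{bozk} by $w^2 u$ and integrating in $(x,y)$, the aim is to bound $\frac{d}{dt}\|wu\|_{L^2}^2$. The nonlinear term $uu_x$ contributes $\int w^2 u^2 u_x$, which after integrating by parts becomes a multiple of $\int w w_x u^3$; since $w_x$ is bounded this is controlled by $\|u\|_{L^\infty}\|wu\|_{L^2}^2$, and $\|u\|_{L^\infty}\lesssim\|u\|_{H^s}$ for $s>2$ by Sobolev embedding. The local term $u_{xyy}$ likewise yields, after integration by parts, terms carrying at most one derivative of $w$ paired with lower-order derivatives of $u$; these are absorbed using boundedness of $w_x,w_y$ and control of $\|u\|_{H^s}$ by $\rho(t)$. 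The genuinely delicate contribution is the dispersive term $\mathcal H\partial_x^2 u$, because the Hilbert transform is nonlocal and does not commute with multiplication by $w$. Here the key is to write $\int w^2 u\,\mathcal H\partial_x^2 u = \int (wu)\,\mathcal H\partial_x^2(wu) + \int (wu)\,[\,w,\mathcal H\partial_x^2\,]u$; the first integral vanishes by antisymmetry of $\mathcal H\partial_x^2$ after integration by parts, and the commutator $[w,\mathcal H\partial_x^2]$ is, thanks to the Calder\'on commutator estimates and the boundedness of the derivatives of $w$, a bounded operator on $L^2$ with norm depending only on $\|w_x\|_\infty,\|w_{xx}\|_\infty$. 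This reduces the dispersive contribution to a term bounded by $\|wu\|_{L^2}(\|wu\|_{L^2}+\|u\|_{H^s})$.

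**Then** Gr\"onwall's inequality, combined with the already-established bound $\|u(t)\|_{H^s}^2\le\rho(t)$, yields $\sup_{[0,T]}\|wu(t)\|_{L^2}<\infty$, giving the persistence property $u\in C([0,T];H^s(w^2))$. To upgrade persistence to full local well-posedness one argues uniqueness (which follows already from the $H^s$ theory) and continuous dependence: I would take a sequence $\phi_n\to\phi$ in $H^s(w^2)$, use Theorem \ref{120} to get convergence in $C([0,T];H^s)$, and then apply the same weighted estimate to the difference $u_n-u$ to obtain convergence of $\|w(u_n-u)(t)\|_{L^2}$, typically via a Bona--Smith type approximation to handle the loss of derivatives in the difference equation.

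**The hard part will be** the commutator estimate for the Hilbert-transport term $[w,\mathcal H\partial_x^2]$, since a naive pairing loses two $x$-derivatives; one must exploit the smoothing inherent in the commutator (the principal symbols cancel) so that only first and second derivatives of $w$ appear, which is exactly where the hypothesis on the boundedness of $w_x,w_{xx},w_{xxx}$ is consumed. A secondary technical point is the continuous-dependence step, where the difference of two solutions satisfies an equation with a forcing term $(u_n-u)\partial_x u$ that is only controlled in $H^{s-1}$; the Bona--Smith regularization or a commutator-with-$\langle D\rangle^s$ argument is needed to close the estimate without derivative loss.
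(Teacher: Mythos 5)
Your core energy computation---splitting $w\mathcal H\partial_x^2u=\mathcal H\partial_x^2(wu)+[w,\mathcal H\partial_x^2]u$, killing the first pairing by antisymmetry, bounding the Calder\'on-type commutator through derivatives of $w$ (Theorem \ref{Comu}), and closing with Gronwall---is exactly the computation at the heart of the paper's proof (one caveat: $[w,\mathcal H\partial_x^2]$ is bounded from $H^1$ to $L^2$, not on $L^2$, since $[w,\partial_x^2]u=-2w_xu_x-w_{xx}u$ costs one derivative of $u$; your final bound involving $\|u\|_{H^s}$ implicitly concedes this). But there is a genuine gap before that computation can even begin, and it stems from a misreading of the hypothesis: the theorem does \emph{not} assume $w$ itself is bounded, only its first, second, and third derivatives---the paper stresses that ``boundedness is required only on its derivatives but not on $w$,'' and the motivating example is $w=(1+x^2+y^2)^{\gamma/2}$, $\gamma\in[0,1]$, which grows. (If $w$ were bounded, persistence would be trivial, since $\|wu\|\leq\|w\|_\infty\|u\|$.) With $w$ unbounded, your derivation of $\frac{d}{dt}\|wu\|^2$ is circular: multiplying \eqref{bozk} by $w^2u$ and integrating presupposes that $wu$, $w u_x$, etc.\ lie in $L^2$, which is precisely what is to be proved; moreover, the solution furnished by Theorem \ref{120} is only in $H^s$, $s>2$, so the pairing with the third-order term $u_{xyy}$ and the subsequent integrations by parts are not justified either. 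The paper repairs both defects at once: it runs the estimate on the parabolic regularization $u_\mu\in H^{\infty,\infty}$ (Remark \ref{bootstrapping}), and, crucially, with the damped weights $w_\lambda=w\,e^{-\lambda(x^2+y^2)}$ of Lemma \ref{lema4.1}, whose first three derivatives are bounded \emph{uniformly in} $\lambda$, so that $\|w_\lambda u_\mu\|$ is finite a priori and the Gronwall bound carries constants independent of $\lambda$ and $\mu$; one then lets $\lambda\downarrow0$ (monotone convergence) and $\mu\downarrow0$ (a Cauchy-net/duality argument in $L^2(w_\lambda^2\,dxdy)$). Without some such truncation-plus-regularization scheme, your a priori estimate does not constitute a proof.

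A second, smaller gap: from $\sup_{[0,T]}\|wu(t)\|<\infty$ you assert $u\in C([0,T];H^s(w^2))$, but continuity of $t\mapsto u(t)$ in $L^2(w^2\,dxdy)$ does not follow from boundedness alone. The paper proves it by first establishing weak continuity in the weighted space, then norm right-continuity at $t=0$ (using that the Gronwall majorant $G(t,0)\to0$ as $t\to0$), and finally propagating to all of $[0,T]$ via translation and reflection invariances of the equation. Your plan for continuous dependence (weighted estimate on the difference plus Bona--Smith) is consistent with the paper's, which performs the difference estimate at the regularized level and then passes to the limits in $\mu$ and $\lambda$.
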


To prove Theorem \ref{H} we follow the arguments in \cite{Iorio1} and
\cite{Aniura1} with some adaptations to our problem. In \cite{Aniura1} the
author have considered a two-dimensional model which can also be viewed as a
generalization of \eqref{boequation}, but with a different structure. Note
that boundedness  is required only on its derivatives but not on $w$.

 Next, we have the following.

\begin{theorem}\label{B1}
The following statements are true.
\begin{itemize}
\item [i)] If $s>2$ and $r\in [0,1]$ then the $\mathrm{IVP}$ \eqref{bozk} is
locally well-posed in $\mathcal{Z}_{s,r}$. Furthermore, if $r\in (1,5/2)$ and
$s\geq 2r$ then  \eqref{bozk} is locally well-posed in $\mathcal{Z}_{s,r}.$
\item [ii)] If $r\in [5/2,7/2)$ and $s\geq 2r$, then the $\mathrm{IVP}$
\eqref{bozk} is locally well-posed in $\dot{\mathcal{Z}}_{s,r}.$
\end{itemize}
\end{theorem}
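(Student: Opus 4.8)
The plan is to prove Theorem \ref{B1} by a bootstrapping argument that upgrades the persistence in $H^s$ (already available from Theorem \ref{120}) to persistence of the polynomial weight $(1+x^2+y^2)^{r}$, proceeding in the order of increasing $r$ and reusing the weighted estimates from Theorem \ref{H}. The key observation is that for integer or half-integer thresholds the weight $|\x|^{2r}$ can be broken into pieces governed by the quantities $x^j$ and $y^k$ acting on $u$, and each such quantity satisfies a differential inequality once we commute the weight through the linear part $L=\mathcal H\partial_x^2+\partial_x\partial_y^2$ of the equation. First I would treat the low range $r\in[0,1]$: here the weight $w=(1+|\x|^2)^{r/2}$ has bounded first, second, and third derivatives, so Theorem \ref{H} applies \emph{verbatim} and gives local well-posedness in $\mathcal Z_{s,r}=H^s\cap L^2(w^2)$ with no further work, which is why this case is stated separately.

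For $r\in(1,5/2)$ with $s\geq 2r$, the weight is no longer globally smooth with bounded derivatives, so I would instead estimate $\||\x|^r u\|_{L^2}$ directly via an energy method. The strategy is to apply the operator $|\x|^r$ (or, more conveniently, to estimate $\|x^a y^b u\|_{L^2}$ for the finitely many monomials with $a+b\le r$ and then sum) to the equation and take the $L^2$ pairing with $|\x|^r u$. The nonlinear term $uu_x$ is controlled by the already-established $H^s$ bound together with the product/Leibniz structure, since $s>2$ places $u$ and its derivatives in $L^\infty$ by Sobolev embedding. The delicate contribution comes from commuting $|\x|^r$ past the nonlocal dispersive term $\mathcal H\partial_x^2 u$: the commutator $[|\x|^r,\mathcal H]$ must be shown to be controlled by lower weights and lower derivatives, using the Calder\'on commutator estimate for the Hilbert transform (in the $x$-variable) and interpolation. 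This is what forces the constraint $s\geq 2r$, namely that enough Sobolev regularity is available to absorb the two $x$-derivatives lost in the commutator. The upper bound $r<5/2$ arises because at $r=5/2$ one meets the first genuine obstruction, explained next.

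The content of part ii) is that for $r\in[5/2,7/2)$ persistence can only hold in the \emph{homogeneous} weighted space $\dot{\mathcal Z}_{s,r}$, in which one additionally imposes a mean-zero type condition (the dot denoting $\widehat\phi(0)=0$, as is standard for the BO hierarchy). The point is that when one estimates the top weight one encounters a term of the form $\int |\x|^{2r-?}\,u\,\mathcal H\partial_x(\cdots)$ whose integrability at frequency zero fails unless $\widehat u(0,\cdot,t)=0$; differentiating the conservation of mass and the structure of $\mathcal H$ shows this condition is \emph{propagated} by the flow precisely on the interval where it is assumed at $t=0$. So I would first establish, via the equation in Fourier variables, that $\widehat u(0,\eta,t)$ satisfies a transport-type identity forcing it to remain zero, and only then run the same energy estimate as in part i) but now with the offending zero-frequency term legitimately vanishing. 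The value $7/2$ is the next threshold where even the homogeneous condition no longer suffices and a further obstruction (tied to the sharpness claimed in the abstract) appears.

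The hard part will be the commutator analysis for the nonlocal term $\mathcal H\partial_x^2$ interacting with the fractional weight $|\x|^r$, since the Hilbert transform does not commute with multiplication by $x$ and the resulting error is not obviously lower order. I expect to handle this by splitting the weight in the two variables, treating the $y$-weight (which sees only the local operator $\partial_x\partial_y^2$ and is therefore straightforward) separately from the $x$-weight, and for the $x$-weight invoking the known identity $x\mathcal H - \mathcal H x = -\tfrac{1}{\pi}\int_{\R}\,\cdot\,$ (a rank-type/bounded correction) iterated $\lfloor r\rfloor$ times, with the remaining fractional part absorbed by interpolating between consecutive integer weights already controlled. The secondary difficulty is bookkeeping the exact regularity count to verify that $s\geq 2r$ is both sufficient and, in view of the unique-continuation results quoted in the introduction, sharp; I would defer the sharpness discussion to the continuation principles and here only verify the positive direction. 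Throughout, continuous dependence and uniqueness follow from the $H^s$ theory of Theorem \ref{120} together with the a priori weighted bounds, by the standard Bona--Smith approximation argument.
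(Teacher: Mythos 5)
Your overall architecture does match the paper's: the range $r\in[0,1]$ is handled by Theorem \ref{H} with $w=\langle x,y\rangle^{r}$ (Remark \ref{remh2teo}); the middle range by weighted energy estimates with commutators; and part ii) by propagating $\hat u(0,\eta,t)=0$ from the conservation of $\int u\,dx$, which restores the identity $\mathcal H(xu)=x\mathcal H u$. But two steps of your plan fail as stated. First, your treatment of the fractional part of the weight --- commuting integer powers of $x$ through $\mathcal H$ and then ``interpolating between consecutive integer weights already controlled'' --- has no valid upper endpoint: the identity $\mathcal H(xf)=x\mathcal H f-\tfrac1\pi\int f\,dx$ produces a correction that is constant in $x$, hence has \emph{infinite} $L^{2}(\R^{2})$ norm unless the corresponding moment vanishes. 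In part i) no mean-zero hypothesis is available (the statement is for all of $\mathcal Z_{s,r}$), and the moments that arise, e.g.\ $\int x\partial_{x}u\,dx=-\int u\,dx$, do not vanish; so the integer-weight operator bounds you propose to interpolate between simply do not hold. The paper instead bounds the fractional pieces $\||x|^{\theta}\mathcal H g\|\le c^{*}\||x|^{\theta}g\|$, $2\theta<1$, by the Hunt--Muckenhoupt--Wheeden $A_{2}$ theory (Theorems \ref{boundhilbert} and \ref{indc}, Remark \ref{condap}), and uses the commutation identity only for integer powers acting on functions whose moment is automatically zero ($x$-derivatives in part i), $u$ itself in part ii)). This division of labor is precisely what produces the threshold $r=5/2$ and the need for $\dot{\mathcal Z}_{s,r}$ beyond it.

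Second, you dismiss the local term $u_{xyy}$ as ``straightforward,'' but it is the main new two-dimensional difficulty and it is where the hypothesis $s\ge 2r$ is actually consumed: after integration by parts, terms like $\int x^{2}w_{N}^{2+2\theta}u\,\partial_{x}\partial_{y}^{2}u$ produce weighted norms of $\partial_{y}u$, $\partial_{y}^{2}u$, $\partial_{x}\partial_{y}u$, which the paper controls via the interpolation Lemma \ref{inter} against $\|J^{2r}u\|$ (this, not the Hilbert-transform commutator alone, forces $s\ge 2r$). Worse, in part ii) these weighted derivative norms, $F_{1}=\|xw_{N}^{\theta}\partial_{y}^{2}u\|^{2}$ and $F_{2}=\|xw_{N}^{\theta}\partial_{x}\partial_{y}u\|^{2}$, are \emph{not} controlled by the quantity being evolved, so the single energy inequality does not close; the paper must differentiate the equation twice in $y$, derive companion weighted estimates, and close a coupled system of six differential inequalities ($g_{1},\dots,g_{6}$, $G=\sum g_{i}^{2}$, $\frac{d}{dt}G\le cG$). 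Your plan contains no mechanism for this. A final, more technical omission: the energy computations must be run with the truncated weights $w_{N}$ of \eqref{defWN}, with constants independent of $N$, so that every quantity is a priori finite, the untruncated estimate being recovered by monotone convergence at the end; pairing directly with $|\x|^{r}u$ as you propose is only formal.
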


To the best of our knowledge, the study of the Cauchy problem in fractional
weighted Sobolev spaces  in the spirit of  Theorem \ref{B1} was initiated by
Fonseca and Ponce in \cite{GermanPonce}, where, in particular, the authors
proved the counterpart of Theorem \ref{B1} for the BO equation (see also
\cite{FLP},\cite{FLP1}, \cite{Iorio}, and \cite{Iorio1}). So, here we extend
their ideas to the two-dimensional case in order to establish our results. Of
course, since \eqref{bozk} includes a third-order derivative and the weights
in hand are two-dimensional, additional troubles are expected in comparison
with the BO equation. However, by performing suitable estimates we are able
to handle with all difficulties.

Note that Theorem \ref{B1} establishes some balancing between the regularity
and the decay rate of the initial data. In particular, the condition
$s\geq2r$ is necessary if $r>1$.

Theorem \ref{B1} is in some sense sharp, which is evidenced  by the following
unique continuation principles.

\begin{theorem}\label{P1}
Let $u\in C([0,T]; \mathcal{Z}_{4,2})$ be a solution of the $\mathrm{IVP}$
\eqref{bozk}. If there exist two different times $t_1, t_2 \in [0,T]$ such
that $u(t_j)\in \mathcal{Z}_{5,5/2}, \ j=1,2 \ $, then
$$
\hat{u}(0,\eta,t)=0
$$
for all $\eta\in\R$ and $t\in[0,T]$.
\end{theorem}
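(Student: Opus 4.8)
The plan is to pass to the Fourier side and to track the regularity of $\widehat{u}$ in the $\xi$-variable across the line $\xi=0$. Writing \eqref{bozk} in Fourier variables, the linear part has symbol $i\psi(\xi,\eta)$ with $\psi(\xi,\eta)=\xi|\xi|-\xi\eta^2$, while the nonlinearity $uu_x=\tfrac12\p_x(u^2)$ contributes $\tfrac{i\xi}{2}\widehat{u^2}$, so that
$$
\p_t\widehat{u}+i\psi\,\widehat{u}+\tfrac{i\xi}{2}\widehat{u^2}=0 .
$$
Since $\psi(0,\eta)=0$ and the nonlinear symbol carries a factor $\xi$, evaluating at $\xi=0$ gives $\p_t\widehat{u}(0,\eta,t)=0$; hence $\widehat{u}(0,\eta,t)=g(\eta)$ is independent of $t$, and it suffices to prove $g\equiv0$. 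The Duhamel representation reads
$$
\widehat{u}(\xi,\eta,t)=e^{-i\psi t}\widehat{u}(\xi,\eta,0)-\frac{i}{2}\int_0^t e^{-i\psi(t-s)}\,\xi\,\widehat{u^2}(\xi,\eta,s)\,ds .
$$

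First I would record the $\xi$-regularity coming from the weights. From $u\in C([0,T];\mathcal{Z}_{4,2})$ one gets, for every $t$, that $\p_\xi\widehat{u}(\cdot,\eta,t)$ is continuous at $\xi=0$ and $\p_\xi^2\widehat{u}(\cdot,\eta,t)\in L^2$, so its one-sided limits at $\xi=0$ exist but a jump is a priori allowed; similarly $\p_\xi\widehat{u^2}$ is continuous since $|x|^2u^2\in L^1$. The decisive point is that at $t=t_j$ the hypothesis $u(t_j)\in\mathcal{Z}_{5,5/2}$ gives $|x|^{5/2}u(t_j)\in L^2$, i.e. $\p_\xi^2\widehat{u}(\cdot,\eta,t_j)$ belongs to $H^{1/2}$ in the $\xi$ variable; as a function with a jump does not lie in $H^{1/2}$, the function $\p_\xi^2\widehat{u}(\cdot,\eta,t_j)$ must be continuous across $\xi=0$.

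Next I would compute the jump of $\p_\xi^2\widehat{u}(\cdot,\eta,t)$ at $\xi=0$ from the Duhamel formula. The only source of non-smoothness is $\psi_{\xi\xi}=2\,\sgn(\xi)$; differentiating the homogeneous term $e^{-i\psi t}\widehat{u}(\xi,\eta,0)$ twice produces a contribution $-2it\,\sgn(\xi)\,e^{-i\psi t}\widehat{u}(\xi,\eta,0)$, whose one-sided limits at $\xi=0$ differ by $-4it\,g(\eta)$, together with the time-independent jump $J_0(\eta)$ carried by $\p_\xi^2\widehat{u}(\cdot,\eta,0)$ itself; all remaining terms are continuous at $\xi=0$. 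In the Duhamel integral the $\sgn(\xi)$ singularity is multiplied by the factor $\xi$ coming from the divergence form of the nonlinearity, so it vanishes at $\xi=0$ and contributes no jump. Hence
$$
\bigl[\p_\xi^2\widehat{u}(\cdot,\eta,t)\bigr]_{\xi=0}=-4it\,g(\eta)+J_0(\eta).
$$

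Finally, evaluating this at $t=t_1$ and $t=t_2$ and invoking the continuity established above, both brackets vanish, so $-4it_j\,g(\eta)+J_0(\eta)=0$ for $j=1,2$. Subtracting eliminates the unknown initial jump $J_0(\eta)$ and yields $-4i(t_1-t_2)\,g(\eta)=0$; since $t_1\neq t_2$ we conclude $g(\eta)=0$ for every $\eta$, that is $\widehat{u}(0,\eta,t)=0$ for all $\eta\in\R$ and $t\in[0,T]$. I expect the main obstacle to be the rigorous part of the regularity analysis: making precise, in the two-dimensional weighted setting, that $\mathcal{Z}_{5,5/2}$ forces continuity of $\p_\xi^2\widehat{u}$ at $\xi=0$ while $\mathcal{Z}_{4,2}$ only guarantees existence of the one-sided limits, and justifying the differentiation under the Duhamel integral together with the exchange of the limits $\xi\to0^{\pm}$ with the time integration. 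The two-times hypothesis is exactly what is needed in order to cancel the a priori unknown jump $J_0(\eta)$ of the initial datum.
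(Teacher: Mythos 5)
Your proposal runs on the same mechanism as the paper's proof: differentiating the phase $e^{it\xi(\eta^2-|\xi|)}$ twice in $\xi$ produces the singular term $-2it\,\sgn(\xi)\hat{\phi}$ (cf.\ \eqref{126}); the decay $|x|^{5/2}u(t_j)\in L^2$ translates, via Plancherel and Fubini, into $H^{1/2}$-regularity in $\xi$ of $\partial_{\xi}^{2}\hat{u}(\cdot,\eta,t_j)$ for a.e.\ $\eta$; a jump discontinuity is incompatible with that regularity (this is exactly Proposition \ref{localint} with $p=2$); and the nonlinearity is harmless because $\hat{z}=\tfrac{i\xi}{2}\widehat{u^{2}}$ carries a factor of $\xi$. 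The genuine gap is at the very first step of your jump bookkeeping: the quantity $J_0(\eta)$ is not well defined. Under the standing hypothesis $u\in C([0,T];\mathcal{Z}_{4,2})$, the function $\partial_{\xi}^{2}\hat{u}(\cdot,\eta,0)$ is, for a.e.\ $\eta$, merely an element of $L^{2}_{\xi}$, and your assertion that ``$\partial_{\xi}^{2}\hat{u}(\cdot,\eta,t)\in L^{2}$, so its one-sided limits at $\xi=0$ exist'' is a non sequitur: an $L^{2}$ function need have one-sided limits nowhere. Consequently the identity $[\partial_{\xi}^{2}\hat{u}(\cdot,\eta,t)]_{\xi=0}=-4itg(\eta)+J_0(\eta)$ cannot be formulated, and the same defect undercuts the step where you declare the jumps at $t_1,t_2$ to vanish: Proposition \ref{localint} only runs in the direction ``one-sided limits exist and differ $\Rightarrow$ not in $L^{2}_{1/2}$,'' so you must first know that the one-sided limits exist, which is precisely what fails for the free-propagation term $e^{it_j\xi(\eta^2-|\xi|)}\partial_{\xi}^{2}\hat{u}(\cdot,\cdot,0)$.

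The missing idea is the paper's first move: translate time so that $t_1$ becomes the initial time (WLOG $t_1=0$). Then $\phi=u(t_1)\in\mathcal{Z}_{5,5/2}$, so $D_{\xi}^{1/2}\partial_{\xi}^{2}\hat{\phi}\in L^{2}$; the term $\partial_{\xi}^{2}\hat{\phi}$ in \eqref{126} is genuinely $H^{1/2}$-regular, there is no unknown jump to eliminate, and the only singular term left in the linear part is $-2it\,\sgn(\xi)e^{it\xi(\eta^2-|\xi|)}\hat{\phi}$; evaluating at $t=t_2$ and using $u(t_2)\in\mathcal{Z}_{5,5/2}$ forces $\hat{\phi}(0,\eta)=0$ by Proposition \ref{localint}, and \eqref{fourieru} concludes. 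Your subtraction idea survives only in a modified form: one must cancel the troublesome term itself, not its putative jump, using the group property $e^{i(t_2-t_1)\xi(\eta^2-|\xi|)}e^{it_1\xi(\eta^2-|\xi|)}=e^{it_2\xi(\eta^2-|\xi|)}$ --- and that exact cancellation is nothing other than the time translation above, so the ``two unknowns, two equations'' structure collapses to the paper's ``one hypothesis regularizes the datum, the other constrains the evolution.'' Be aware, finally, that the qualitative scheme conceals the bulk of the actual proof: to isolate the singular term one must show every other term in \eqref{aftH5} lies in $L^{2}(\R^2)$, which requires the localization $\chi(\xi,\eta)=\tilde{\chi}(\xi)e^{-\eta^{2}}$, the commutator estimate of Proposition \ref{C}, the Stein-derivative bounds of Proposition \ref{Pontual} and Lemma \ref{P}, and, for the Duhamel terms, the persistence $u\in C([0,T];H^{5}\cap L^{2}_{r})$, $0<r<5/2$, furnished by Theorem \ref{B1} applied to $\phi\in\mathcal{Z}_{5,5/2}$ --- none of which is available from $\mathcal{Z}_{4,2}$ alone.
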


\begin{theorem}\label{P2}
Let $u\in C([0,T]; \mathcal{Z}_{4,2})$ be a solution of the $\mathrm{IVP}$
\eqref{bozk}. If there exist three different times $t_1, t_2, t_3 \in [0,T]$
such that $u(t_j)\in \mathcal{Z}_{7,7/2}, \ j=1,2,3, \ $, then
$$
u(x,y,t)=0
$$
for all $x,y\in\R$ and $t\in[0,T]$.
\end{theorem}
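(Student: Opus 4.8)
The plan is to pass to the spatial Fourier transform and exploit the limited smoothness of the dispersive symbol at $\xi=0$. Writing $\omega(\xi,\eta)=\xi|\xi|-\xi\eta^2$, the equation \eqref{bozk} has linear symbol $i\omega$, so Duhamel's formula reads
\begin{equation*}
\hat u(\xi,\eta,t)=e^{-it\omega}\hat\phi(\xi,\eta)-\frac{i\xi}{2}\int_0^t e^{-i(t-\tau)\omega}\widehat{u^2}(\xi,\eta,\tau)\,d\tau .
\end{equation*}
The only source of non-smoothness in $\xi$ is $\xi|\xi|$, whose derivatives are $\partial_\xi(\xi|\xi|)=2|\xi|$, $\partial_\xi^2(\xi|\xi|)=2\sgn(\xi)$ and $\partial_\xi^3(\xi|\xi|)=4\delta$. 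I would use the weight--regularity correspondence between the weight $(1+x^2+y^2)^{7/2}$ and membership of $\hat u$ in $H^{7/2}(\R^2_{\xi,\eta})$: since $7/2=3+\tfrac12$ and $\sgn(\xi)$ lies in every $H^{1/2-\varepsilon}$ but not in $H^{1/2}$, the hypothesis $u(t_j)\in\mathcal{Z}_{7,7/2}$ forces the coefficient of the $\sgn(\xi)$-jump of $\partial_\xi^3\hat u(\cdot,t_j)$ at $\xi=0$ to vanish. Since $\mathcal{Z}_{7,7/2}\subset\mathcal{Z}_{5,5/2}$ and we are given three (hence at least two) such times, Theorem \ref{P1} applies and yields $\hat u(0,\eta,t)=0$ for all $\eta$ and $t$, a fact I would use repeatedly to discard the lower-order singular contributions.

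Next I would differentiate the Duhamel formula three times in $\xi$ and collect, via the Leibniz and Fa\`a di Bruno rules, exactly the terms carrying the $\sgn(\xi)$ singularity as $\xi\to0$. The factor $\xi$ in front of the integral is helpful: it turns the worst contributions $\xi\,\partial_\xi^3(\cdots)$ into $|\xi|$- and $\delta$-type terms that are harmless in $H^{1/2}$, so the jump of $\partial_\xi^3$ of the Duhamel term comes only from $3\,\partial_\xi^2$ of the integrand. Using $\hat u(0,\eta,t)=0$ to annihilate every term proportional to $\hat\phi(0,\eta)$ (in particular the $\delta$-contribution), the surviving jump coefficient takes the schematic form
\begin{equation*}
C(\eta,t)=c_1\,t\,\partial_\xi\hat\phi(0,\eta)+c_2\int_0^t(t-\tau)\,\widehat{u^2}(0,\eta,\tau)\,d\tau ,
\end{equation*}
with nonzero absolute constants $c_1,c_2$, and this explicit part vanishes at $t=0$. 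Writing the full jump as $J(\eta)+C(\eta,t)$, where $J(\eta)$ is the a priori uncontrolled time-independent contribution inherited from $\hat\phi$, the membership $\hat u(t_j)\in H^{7/2}$ gives $J(\eta)+C(\eta,t_j)=0$ for $j=1,2,3$.

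Finally I would specialize to $\eta=0$. By the conservation of the $L^2$-norm for \eqref{bozk}, $\widehat{u^2}(0,0,\tau)=\|u(\tau)\|_{L^2}^2=\|\phi\|_{L^2}^2$ is constant in $\tau$, so
\begin{equation*}
C(0,t)=c_1\,\partial_\xi\hat\phi(0,0)\,t+\tfrac12 c_2\,\|\phi\|_{L^2}^2\,t^2
\end{equation*}
is a genuine polynomial of degree at most two in $t$ with vanishing constant term. The three relations $J(0)+C(0,t_j)=0$ then say that the quadratic $t\mapsto J(0)+C(0,t)$ vanishes at three distinct points, hence is identically zero; comparing the $t^2$-coefficients yields $\|\phi\|_{L^2}^2=0$, i.e. $\phi\equiv0$ and therefore $u\equiv0$ on $[0,T]$. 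To make the evaluation at $\eta=0$ legitimate I would first note that $u\in C([0,T];\mathcal{Z}_{4,2})$ renders $\widehat{u^2}(0,\cdot,\tau)$ and $\partial_\xi\hat\phi(0,\cdot)$ continuous, so the identity $C(\eta,t_j)=-J(\eta)$ extends by continuity to $\eta=0$.

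The main obstacle is the rigorous harmonic-analysis bookkeeping underlying this computation: justifying that one may differentiate the Duhamel integral three times in $\xi$, isolating the genuinely singular $\sgn(\xi)$- and $\delta$-contributions as honest distributions, and proving that all remaining ``good'' terms actually belong to $H^{7/2}$, so that the vanishing of the jump coefficient is both necessary and sufficient for $u(t_j)\in\mathcal{Z}_{7,7/2}$. This rests on the weight--regularity lemmas and the fractional Leibniz and product estimates established in the preliminary section, in the spirit of \cite{GermanPonce}, together with the two-dimensional nature of the weight, which couples the $\xi$- and $\eta$-behavior and must be controlled carefully enough that the $\eta=0$ trace is valid.
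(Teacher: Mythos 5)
Your proposal is correct and follows essentially the same route as the paper's own proof: Duhamel's formula on the Fourier side, three $\xi$-derivatives of $e^{it\xi(\eta^2-|\xi|)}$, Theorem \ref{P1} to annihilate the $\delta_\xi$-term and every singular term proportional to $\hat\phi(0,\eta)$, the incompatibility of a $\mathrm{sgn}(\xi)$-jump with the $L^2_{1/2}$-regularity in $\xi$ forced by the weight $|x|^{7/2}\le\langle x,y\rangle^{7/2}$ (Proposition \ref{localint}, applied for a.e.\ $\eta$ via Fubini), and finally $L^2$-conservation together with the first $x$-moment identity \eqref{124} to turn the surviving jump coefficient into a polynomial in $t$ that must vanish at the three given times. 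The remaining differences are cosmetic: the paper takes $t_1=0$ so that the data's own contribution to the jump is controlled directly by $\phi\in\mathcal{Z}_{7,7/2}$ (rather than eliminated by differencing, as in your $J(\eta)$ device), and its endgame applies Rolle's theorem twice to $t\mapsto\int_0^t\!\!\int xe^{-i\eta y}u\,dxdy\,dt'$ at $\eta=0$, which is precisely your vanishing-quadratic interpolation argument in disguise.
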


Two important conclusions emerge from Theorems \ref{P1} and \ref{P2}. The
first one is that the condition $\widehat{\phi}(0,\eta)=0$, for all
$\eta\in\R$, is necessary to have persistence property in
$\mathcal{Z}_{s,5/2}$, $s\geq5$. In particular, part (i) of Theorem \ref{B1}
shows to be sharp. The second one is that if an initial data $\phi$ has a
decay stronger than $|(x,y)|^{7/2}$ then the persistence property does not
hold, unless it vanishes identically. This shows that part (ii) of Theorem
\ref{B1} is also sharp. A similar conclusion for BO equation was obtained in
\cite[Theorems 2 and 3]{GermanPonce}.

It should also be  pointed out that our unique continuation statements are
stronger than the ones in \cite{EP3}, where it is assumed that the solution
has compact support for all $t\in[0,T]$.

\begin{remark}
We believe one can prove similar results as those above in the weighted
anisotropic Sobolev spaces $Z_{s_1,r_1}^{s_2,r_2}=H^{s_1,s_2}\cap
L^2_{r_1,r_2}$, where
$$
L^2_{r_1,r_2}=L^2((1+x^{2r_1}+y^{2r_2})dxdy).
$$
This is currently under investigation.
\end{remark}

The paper is organized as follows. In Section \ref{notation} we introduce the
notation used throughout the paper and give some preliminaries results. By
using the parabolic regularization method, we prove in Section \ref{localwp}
the local well-posedness in Sobolev spaces. Sections \ref{localweighted} and
\ref{localweighted1} are dedicated to prove Theorems \ref{H} and \ref{B1},
respectively. Finally, in Section \ref{uniquep} we establish Theorems
\ref{P1} and \ref{P2}.


\section{Notation and Preliminaries}\label{notation}

Let us first introduce some notation. We use $c$ to denote various constants
that may vary line by line; if necessary we use subscript to indicate
dependence on parameters. With $[A,B]$ we denote the commutator between the
operators $A$ and $B$. By $\|\cdot\|_p$ we denote the usual $L^p$ norm.
Because the $L^2$ norm appears frequently below, we use the notation
$\|\cdot\|$ for it. The scalar product in $L^2$ will be then represented by
$(\cdot,\cdot)$. In particular, note if $f=f(x,y)$ then
$\|f\|=\|\|f(\cdot,y)\|_{L^2_x}\|_{L^2_y}$, where by $\|\cdot\|_{L^2_z}$ we
mean the $L^2_z$ norm with respect to the variable $z$. The integral $\int f$
will stand, otherwise is stated, for the integration of $f$ over $\R^2$.

For any $s\in \R$, $H^s:=H^s(\R^2)$ represents the usual $L^2$-based Sobolev
space with norm $\|\cdot\|_{H^s}$. The Fourier transform of $f$ is defined as
$$
\hat{f}(\xi,\eta)=\int_{\R^2}e^{-i(x\xi+y\eta)}f(x,y)dxdy.
$$
Given any complex number $z$, let us define the operators $J^z_x$, $J^z_y$,
and $J^z$ via its Fourier transform by
$$
\widehat{J^z_xf}(\xi,\eta)=(1+\xi^2)^{z/2}\hat{f}(\xi,\eta);
$$
$$
\widehat{J^z_yf}(\xi,\eta)=(1+\eta^2)^{z/2}\hat{f}(\xi,\eta);
$$
$$
\widehat{J^zf}(\xi,\eta)=(1+\xi^2+\eta^2)^{z/2}\hat{f}(\xi,\eta).
$$
Given $s_1,s_2\in \R$, the anisotropic Sobolev space
$H^{s_1,s_2}=H^{s_1,s_2}(\R^2)$ is the set of all tempered distributions $f$
such that
$$
\|f\|^{2}_{s_1,s_2}=\|f\|^{2} +
\|J_x^{s_1}f\|^{2}+\|J_y^{s_2}f\|^{2}<\infty.
$$
The scalar product in $H^{s_1,s_2}$ will be denoted by
$(\cdot,\cdot)_{s_1,s_2}$.

As usual $\mathcal{S}(\R^2)$ will denote the Schwartz space. Given $s\in \R$
and $w:\R^2\to [0,\infty)$, we define the weighted Sobolev space to be
$$
H^{s}(w^2):=H^{s}(\R^2)\cap L^{2}(w^{2}dxdy).
$$
In particular, for $r>0$, we denote
$$
\mathcal{Z}_{s,r}:=H^s(\R^2)\cap L^2_r,
$$
where $L^2_r:=L^2(\langle x,y\rangle^{2r}dxdy)$. Here, $\langle
x,y\rangle:=(1+x^2+y^2)^{1/2}$. The norm in $\mathcal{Z}_{s,r}$ is given by
$\|\cdot\|_{\mathcal{Z}_{s,r}}^2=\|\cdot\|_{H^s}^2+\|\cdot\|_{L^2_r}^2$.
Also, the subspace $\dot{\mathcal{Z}}_{s,r}$ of $\mathcal{Z}_{s,r}$ is
defined as
$$
\dot{\mathcal{Z}}_{s,r}:=\{f\in\mathcal{Z}_{s,r}\ | \ \hat{f}(0,\eta)=0, \
\eta\in\R \}.
$$

Suppose  $\phi\in \mathcal{Z}_{s,r}$ and let $u$ be the local solution of
\eqref{bozk}. Assuming  that $u$ is sufficiently regular, we can integrate
the equation with respect to $x$ to obtain, at least formally,
\begin{equation}\label{consquan}
\int_{\R}u(x,y,t)dx=\int_\R\phi(x,y)dx, \quad y\in\R,
\end{equation}
as long as the solution exists. This implies that
\begin{equation}\label{fourieru}
\hat{u}(0,\eta,t)=\hat{\phi}(0,\eta), \quad \eta\in \R,
\end{equation}
for all $t$ for which the solution exists. In particular, if $\phi\in
\dot{\mathcal{Z}}_{s,r}$ then $\hat{u}(0,\eta,t)=0$,  for all $\eta\in\R$ and
$t$ for which the solution exits.

 Let $N\in \Z^{+}$. We define a function $\beta_N:\R\to\R$ by letting
\begin{eqnarray}
\beta_{N}(x):=\left\{\begin{array} {lccc}
\langle x \rangle \ \mathrm{if} \  |x|\leq N,\\
2N \ \mathrm{if} \ |x|\geq 3N,
\end{array} \right.
\end{eqnarray}
where $\langle x \rangle = (1+x^2)^{1/2}$. Also, we assume that $\beta_{N}$
is smooth, symmetric, non-decreasing in $|x|$ with $\beta_{N}'(x)\leq 1,$ for
any $x\geq 0$, and there exists a constant $c$ independent of $N$ such that
$|\beta_{N}''(x)|\leq c \partial_{x}^{2}\langle x \rangle.$ Now, we define
the two-dimensional truncated weights
\begin{equation}\label{defWN}
w_{N}(x,y):=\beta_{N}(r), \qquad \mbox{where} \;\;\;r=(x^2 +y^2)^{1/2}.
\end{equation}

Next, we introduce some preliminaries results which will be useful to
demonstrate our main results. Most of these results have appeared elsewhere,
but for the sake of completeness we bring then here.

\begin{definition}
We say that a non-negative function $w\in L^{1}_{loc}(\R)$ satisfies the
$A_{p}$ condition, with $1<p<\infty$, if
\begin{eqnarray}\label{Ap}
\sup_{Q \ \mathrm{interval}}\left (\frac{1}{|Q|}\int _{Q}w\right)\left (\frac{1}{|Q|}\int_{Q}w^{1-p'}\right)^{p-1}=c(w)<\infty,
\end{eqnarray}
where $1/p+1/p'=1.$
\end{definition}

Since our main results are concerned with weighted spaces, we need to deal
with the Hilbert transform in weighted spaces. The next result will be
sufficient to our purposes.

\begin{theorem}\label{boundhilbert}
The condition \eqref{Ap} is necessary  and sufficient for the boundedness of
the Hilbert transform $\mathcal{H}$ in $L^{p}(w(x)dx),$ i.e.,
\begin{eqnarray}\label{bound}
\left(\int_{-\infty}^{\infty}|\mathcal{H}f|^{p}w(x)dx\right)^{1/p}\leq c^{*} \left(\int_{-\infty}^{\infty}|f|^{p}w(x)dx\right)^{1/p}.
\end{eqnarray}
\end{theorem}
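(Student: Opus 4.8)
The plan is to prove both implications in Theorem \ref{boundhilbert}, with the essential work concentrated in the sufficiency direction ($w\in A_p\Rightarrow \mathcal{H}$ bounded). For the \emph{necessity}, I would test the inequality \eqref{bound} against carefully chosen functions. Fixing an interval $Q$ and taking $f=w^{1-p'}\chi_Q$, one checks that for $x$ in a translate $\tilde Q$ of $Q$ at distance comparable to $|Q|$ one has the pointwise lower bound $|\mathcal{H}f(x)|\gtrsim \frac{1}{|Q|}\int_Q w^{1-p'}$. Substituting this into \eqref{bound}, integrating over $\tilde Q$, and rearranging reproduces exactly the product appearing in \eqref{Ap}, bounded by a power of $c^*$. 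This part is elementary once the test function and the location of $\tilde Q$ are fixed.

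For the \emph{sufficiency}, the strategy is to dominate the Hilbert transform by the Hardy--Littlewood maximal operator
$$
Mf(x)=\sup_{Q\ni x}\frac{1}{|Q|}\int_Q|f|,
$$
and to run the argument in three steps. First (Muckenhoupt's theorem), I would show that $M$ is bounded on $L^p(w\,dx)$ whenever $w\in A_p$: this follows from a weak-type $(p,p)$ bound obtained via a Calder\'on--Zygmund/Vitali covering argument combined directly with the $A_p$ inequality \eqref{Ap}, upgraded to the strong bound by Marcinkiewicz interpolation. Second, I would establish the self-improving reverse H\"older inequality: for each $w\in A_p$ there is $\epsilon>0$ with
$$
\left(\frac{1}{|Q|}\int_Q w^{1+\epsilon}\right)^{1/(1+\epsilon)}\leq \frac{c}{|Q|}\int_Q w,
$$
which yields the openness property $A_p\subset A_{p-\delta}$ for some $\delta>0$ and, in particular, places $w$ in the class $A_\infty$ (comparability of Lebesgue and $w$-measure on level sets).

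Third, I would prove a Coifman--Fefferman type estimate transferring boundedness from $M$ to $\mathcal{H}$ in the $A_\infty$ setting, for instance through the good-$\lambda$ inequality
$$
w\big(\{|\mathcal{H}f|>2\lambda,\ Mf\leq\gamma\lambda\}\big)\leq c\,\gamma^{\delta}\,w\big(\{|\mathcal{H}f|>\lambda\}\big),
$$
or, equivalently, via the Fefferman--Stein sharp maximal bound $(\mathcal{H}f)^{\#}\lesssim M(|f|^{r})^{1/r}$ for some $r>1$. Chaining the three steps gives $\|\mathcal{H}f\|_{L^p(w)}\lesssim\|Mf\|_{L^p(w)}\lesssim\|f\|_{L^p(w)}$, which is \eqref{bound}.

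I expect the main obstacle to be the combination of the second and third steps. The reverse H\"older self-improvement requires a delicate iteration on a stopping-time Calder\'on--Zygmund decomposition, and the good-$\lambda$ inequality is where the $A_\infty$ structure must be exploited in earnest, combined with the unweighted weak-$(1,1)$ and $L^2$ bounds for $\mathcal{H}$ to localize the singular integral on the relevant level sets. By contrast, the maximal-function bound and the necessity direction are comparatively routine. Since this is a classical result (Hunt--Muckenhoupt--Wheeden), in the paper it suffices to cite it, but the scheme above is how I would reconstruct the proof.
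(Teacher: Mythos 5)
Your proposal is mathematically sound, but note that the paper does not prove this theorem at all: its entire ``proof'' is the citation to \cite{Hunt&Munk}, since the result is the classical Hunt--Muckenhoupt--Wheeden theorem and is used in the paper purely as a black box. You correctly anticipated this in your last sentence, so there is no conflict; what you have written is a reconstruction that the paper never attempts. Your scheme is the standard modern argument (essentially Coifman--Fefferman): maximal-function bound for $A_p$ weights, reverse H\"older self-improvement, and a good-$\lambda$ (or sharp-maximal) transference from $M$ to $\mathcal{H}$; this is in fact a later and cleaner route than the original 1973 proof, and it is the one found in most textbooks. Two small caveats if you were to write it out in full: in the necessity step, testing with $f=w^{1-p'}\chi_Q$ requires first ruling out $\int_Q w^{1-p'}=\infty$ (a standard truncation handles this), and the pointwise lower bound on the translate $\tilde Q$ yields the \emph{mixed} product $\bigl(\tfrac{1}{|Q|}\int_{\tilde Q}w\bigr)\bigl(\tfrac{1}{|Q|}\int_{Q}w^{1-p'}\bigr)^{p-1}\leq (c^*)^p$ over adjacent intervals, from which the same-interval condition \eqref{Ap} follows by a further (easy but non-vacuous) swapping/doubling argument; and in the good-$\lambda$ step one must work with a dense class of nice $f$ so that the level sets have finite $w$-measure a priori. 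These are routine refinements, not gaps; for the purposes of this paper the citation is the appropriate ``proof,'' and your outline is a correct account of what lies behind it.
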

\begin{proof}
See \cite{Hunt&Munk}.
\end{proof}

\begin{remark}\label{condap}
It is not difficult to check that $|x|^{\alpha}$ satisfies the $A_{2}$
condition  if and only if $\alpha \in (-1,1).$ More generally, $|x|^\alpha$
satisfies the $A_{p}$ condition if and only if $\alpha\in(-1,p-1)$ (see also
\cite[page 441]{GermanPonce}).
\end{remark}

The next three results will be widely used in the proof of Theorem \ref{B1}.
\begin{theorem}\label{indc}
For $p\in [2,\infty)$ the inequality \eqref{bound} holds with $c^{*}\leq
c(p)c(w),$ where $c(p)$ depends only on $p$ and $c(w)$ is as in \eqref{Ap}.
Moreover, for $p=2$ this estimate is sharp.
\end{theorem}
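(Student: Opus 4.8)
The estimate claimed is the sharp quantitative form of the Muckenhoupt--Hunt--Wheeden theorem (Theorem \ref{boundhilbert}), and the plan is to isolate the endpoint $p=2$ and then propagate it across the whole range $p\in[2,\infty)$ by a sharp extrapolation argument. The key observation is that for $p\geq2$ one has $1/(p-1)\leq1$, so the sharp $A_p$ exponent $\max\{1,1/(p-1)\}$ collapses to $1$; hence a \emph{linear} bound in $c(w)$ at $p=2$ is exactly what produces a linear bound in $c(w)$ throughout $[2,\infty)$. Concretely, once the estimate $\|\mathcal{H}f\|_{L^2(w)}\leq c\,c(w)\,\|f\|_{L^2(w)}$ is available for every $A_2$ weight, the sharp Rubio de Francia extrapolation theorem upgrades it to $\|\mathcal{H}f\|_{L^p(w)}\leq c(p)\,c(w)^{\max\{1,1/(p-1)\}}\|f\|_{L^p(w)}=c(p)\,c(w)\,\|f\|_{L^p(w)}$ for all $p\geq2$, which is precisely \eqref{bound} with $c^{*}\leq c(p)c(w)$.

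It therefore remains to establish the linear $A_2$ bound for $\mathcal{H}$, and this is the heart of the matter. I would follow the dyadic route: represent the Hilbert transform as a suitable average, over translated and dilated dyadic grids, of the elementary Haar shift operators, so that the weighted norm of $\mathcal{H}$ is controlled by the supremum of the weighted norms of these dyadic models, uniformly in the grid. Each dyadic shift is then dominated pointwise by a sparse operator built from local averages, and for sparse operators the linear dependence on $c(w)$ follows from a direct two-weight testing computation (equivalently, a Bellman-function estimate). Passing back through the averaging yields $\|\mathcal{H}\|_{L^2(w)\to L^2(w)}\leq c\,c(w)$.

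For the sharpness at $p=2$, the plan is to exhibit a one-parameter family of weights for which both $c(w)$ and the operator norm blow up at the same rate. The natural choice is the power weight $w_a(x)=|x|^{a}$ with $a\uparrow1$: by Remark \ref{condap} these lie in $A_2$, one computes $c(w_a)\sim(1-a)^{-1}$, and testing $\mathcal{H}$ against a function concentrated near the origin shows $\|\mathcal{H}\|_{L^2(w_a)\to L^2(w_a)}\gtrsim(1-a)^{-1}$ as well; comparing the two rates shows the exponent of $c(w)$ cannot be lowered below $1$.

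The main obstacle is clearly the linear $A_2$ estimate for the dyadic shifts: controlling the weighted operator norm by the \emph{first} power of $c(w)$, rather than by a higher power as a naive good-$\lambda$ or interpolation argument would give, requires the sparse or Bellman machinery and is the only genuinely nontrivial ingredient. The extrapolation step and the power-weight example are then routine, so that the whole difficulty is concentrated in the $p=2$ linear bound.
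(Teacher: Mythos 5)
Your outline is mathematically sound, but there is nothing in the paper to compare it against: the paper does not prove Theorem \ref{indc} at all --- its entire proof is the citation ``See \cite{kaikina}''. The statement being quoted is in fact Petermichl's theorem on the linear $A_2$ bound for the Hilbert transform (Amer. J. Math. 129 (2007)) combined with the sharp Rubio de Francia extrapolation theorem of Dragi\v{c}evi\'c, Grafakos, Pereyra, and Petermichl (note that \cite{kaikina} is a 2002 paper and predates the sharp result). What you describe is precisely the standard proof of that quoted theorem, and each step is correct: the representation of $\mathcal{H}$ as an average of dyadic shift operators over translated and dilated grids; the linear-in-$c(w)$ bound at $p=2$ for the dyadic models (Bellman function in Petermichl's original argument, sparse domination in the later proofs); sharp extrapolation, which converts an exponent $\alpha$ at $p=2$ into $\alpha\max\{1,1/(p-1)\}$ at general $p$ and therefore preserves linearity exactly on the range $p\in[2,\infty)$ appearing in the statement; and sharpness at $p=2$ via the weights $w_a(x)=|x|^{a}$, $a\uparrow1$, for which $c(w_a)\sim(1-a)^{-1}$, while the test function $f(x)=x^{-a}\chi_{(0,1)}(x)$ satisfies $|\mathcal{H}f(x)|\gtrsim(1-a)^{-1}|x|^{-a}$ near the origin and hence $\|\mathcal{H}f\|_{L^{2}(w_a\,dx)}\gtrsim(1-a)^{-1}\|f\|_{L^{2}(w_a\,dx)}$, so no exponent below $1$ can work.

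One caution about how your argument should be read. The step you correctly isolate as the crux --- obtaining the \emph{first} power of $c(w)$ for the dyadic models and transferring it to $\mathcal{H}$ --- is not a lemma dispatched by a short computation; it is the substance of the $A_2$ theorem, and every softer route (good-$\lambda$, interpolation with change of measure) loses at least a power of $c(w)$. So your proposal is a correct reduction of Theorem \ref{indc} to named deep theorems of harmonic analysis rather than a self-contained proof. That is the same logical status as the paper's one-line citation, and it is adequate in context: the paper invokes the theorem only as a black box for fixed power weights (via Remark \ref{condap} and the pointwise bound \eqref{wNjap} reducing the truncated weights $w_N$ to them), so only the qualitative control $c^{*}\leq c(p)c(w)$ is ever used in Section \ref{localweighted1}; the linearity in $c(w)$ and the sharpness at $p=2$ play no role in the rest of the paper.
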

\begin{proof}
See \cite{kaikina}.
\end{proof}

The next theorem  is a generalization of Calder\'on commutator estimate
\cite{Cald}. Its proof can be found in \cite{Dawson}. Moreover it has
applications for several dispersive models.

\begin{theorem}\label{Comu}
For any $p\in (1,\infty)$ and $l,m\in \Z^{+}\cup \{0\},$ $l+m\geq 1,$ there exists $c=c(p;l;m)>0$ such that
\begin{equation}\label{cald}
\|\partial_{x}^{l}[\mathcal{H};a]\partial_{x}^{m}f\|_{p}\leq c \|\partial_{x}^{l+m}a\|_{\infty}\|f\|_{p}.
\end{equation}
\end{theorem}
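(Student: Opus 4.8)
The plan is to treat $\partial_x^l[\mathcal{H};a]\partial_x^m$ as a one‑dimensional singular integral operator (in $x$, with $y$ a passive parameter) and to reduce the estimate, through the commutation properties of $\mathcal{H}$ together with the classical Calder\'on commutator theorem, to control by the \emph{single} top-order quantity $\|\partial_x^{l+m}a\|_\infty$.

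First I would reduce to the case $l=0$. Since $\mathcal{H}$ commutes with $\partial_x$, so that $\partial_x^m\mathcal{H}f=\mathcal{H}\partial_x^m f$, the Leibniz rule yields
\begin{equation*}
\partial_x^l[\mathcal{H};a]\partial_x^m f=\sum_{j=0}^l\binom{l}{j}\,[\mathcal{H};\partial_x^j a]\,\partial_x^{\,l+m-j}f .
\end{equation*}
Hence, once the case $l=0$ is available in the form $\|[\mathcal{H};A]\partial_x^M g\|_p\le c\,\|\partial_x^M A\|_\infty\|g\|_p$, applying it with $A=\partial_x^j a$ and $M=l+m-j$ bounds the $j$-th summand by $c\,\|\partial_x^{\,l+m-j}\partial_x^j a\|_\infty\|f\|_p=c\,\|\partial_x^{l+m}a\|_\infty\|f\|_p$; summing the finitely many terms gives the full estimate. (Equivalently, since the formal adjoint of $\partial_x^l[\mathcal{H};a]\partial_x^m$ is, up to sign, $\partial_x^m[\mathcal{H};a]\partial_x^l$ because $\mathcal{H}^*=-\mathcal{H}$, duality between $L^p$ and $L^{p'}$ interchanges the roles of $l$ and $m$.)

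Second, I would settle the case $l=0$. For $f\in\mathcal{S}(\R)$ one has $[\mathcal{H};a]\partial_x^m f(x)=\tfrac1\pi\int_{\R}\frac{a(z)-a(x)}{x-z}\,\partial_z^m f(z)\,dz$, and the key observation is that $z\mapsto (a(z)-a(x))/(x-z)=-\int_0^1 a'(x+s(z-x))\,ds$ is smooth, so one may integrate by parts $m$ times in $z$ with no boundary or principal-value obstruction, producing the kernel $\frac{(-1)^{m+1}}{\pi}\int_0^1 a^{(m+1)}(x+s(z-x))\,s^m\,ds$. A single integration by parts in $s$ then trades the factor $a^{(m+1)}$ for $b:=a^{(m)}$ and restores the singular structure: the resulting operator splits into a term $(-1)^m\mathcal{H}(b\,f)$, controlled by $\|b\|_\infty\|f\|_p$ through the $L^p$-boundedness of the Hilbert transform (Theorems \ref{boundhilbert} and \ref{indc}), plus a term whose kernel is $b(z)$ minus a weighted average of $b$ along the segment $[x,z]$, divided by $z-x$. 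For $m=1$ this latter term is exactly Calder\'on's first commutator $\mathrm{p.v.}\int \frac{a(z)-a(x)}{(z-x)^2}f\,dz$, bounded by $\|a'\|_\infty$; for general $m$ it is a generalized Calder\'on commutator of the same homogeneity, bounded on $L^p$ by $c\,\|b\|_\infty=c\,\|a^{(m)}\|_\infty$ by the theory underlying \cite{Cald}.

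The hard part is precisely this reduction when $l=0$: the estimate must close using only the norm $\|\partial_x^{l+m}a\|_\infty$, so the (in fact \emph{bounded}) kernel obtained after moving all derivatives onto it is useless for a pointwise bound, and one must instead organize the expansion so that every contribution carrying a lower-order derivative of $a$ cancels, leaving genuine Calder\'on-commutator structures. Verifying these cancellations for arbitrary $m$, and checking that the resulting constant depends only on $p$, $l$, and $m$, is the main technical point; the $L^p$-boundedness of the commutators that remain is then the content of the Calder\'on commutator theorem.
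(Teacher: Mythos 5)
Your skeleton is fine as far as it goes: the Leibniz reduction to $l=0$ (with the trivial $M=0$ subcase), the kernel representation $\frac{1}{\pi}\frac{a(z)-a(x)}{x-z}=-\frac1\pi\int_0^1a'(x+s(z-x))\,ds$, the $m$-fold integration by parts in $z$, the integration by parts in $s$, and the identification of the $m=1$ remainder with Calder\'on's first commutator are all correct; for $l+m=1$ your proof is complete. The genuine gap is the case of total order $l+m\geq 2$, which you cannot avoid: the $j=0$ term of your own Leibniz sum is $[\mathcal{H};a]\partial_x^{l+m}f$, so even when the original $m\leq 1$ you need the case $(0,k)$ for $k=l+m$. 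For $k\geq 2$ the remainder operator, with kernel $\frac{1}{z-x}\,k\int_0^1 a^{(k)}(x+s(z-x))\,s^{k-1}\,ds$, is \emph{not} a Calder\'on commutator: the theorem of \cite{Cald} treats exactly the kernel $(A(x)-A(z))/(x-z)^2$, i.e.\ your $m=1$ case, and nothing there covers these averaged kernels. Asserting that they are ``generalized Calder\'on commutators of the same homogeneity, bounded\dots by the theory underlying \cite{Cald}'' is not a proof but a restatement of the claim to be proved, since modulo the harmless term $(-1)^k\mathcal{H}(a^{(k)}f)$ this operator \emph{is} $[\mathcal{H};a]\partial_x^k$; your last paragraph concedes that verifying the cancellation for arbitrary $m$ ``is the main technical point,'' and the proposal never carries it out.

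To see that homogeneity plus a pointwise kernel bound cannot suffice, note that for each fixed $s\in(0,1)$ the operator with kernel $a^{(k)}((1-s)x+sz)/(z-x)$ has the same homogeneity and the same size, yet its bilinear symbol is $c\,\mathrm{sgn}(s\beta+\zeta)$, i.e.\ it is a bilinear Hilbert transform, whose $L^\infty\times L^p\to L^p$ boundedness is a Lacey--Thiele--level fact far beyond \cite{Cald}; only the specific weighted average over $s$ is a classical operator, and that must be exploited. A correct completion needs a real added ingredient: for instance, your kernel manipulations do show that the averaged kernel is a standard Calder\'on--Zygmund kernel with constants $c_k\|a^{(k)}\|_\infty$, and one can then check $T1=0$, $T^*1=\pm\mathcal{H}(a^{(k)})\in BMO$ and the weak boundedness property, so the David--Journ\'e $T1$ theorem gives $L^2$ and Calder\'on--Zygmund theory gives all $p\in(1,\infty)$; alternatively one can invoke Coifman--Meyer multilinear theory. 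Note also that the paper itself offers no proof of Theorem \ref{Comu}: it cites \cite{Dawson}, and the analysis you are missing is precisely the content of that reference.
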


Let us recall that $L^{p}_{s}:=(1-\Delta)^{-s/2}L^{p}(\R^n)$. Such spaces
can be characterized by the following result.

\begin{theorem}\label{stein}
Let $b\in (0,1)$ and $2n/(n+2b)<p<\infty.$ Then $f\in L^{p}_{b}(\R^{n})$ if and only if
\begin{itemize}
\item [a)] $f\in L^{p}(\R^{n}),$
\item [b)]
$\mathcal{D}^{b}f(x)={\displaystyle \left (
\int_{\R^{n}}\frac{|f(x)-f(y)|^{2}}{|x-y|^{n+2b}}dy\right)^{1/2}} \in
L^{p}(\R^{n}),$

\noindent with
\begin{equation}\label{equiv}
\|f\|_{b,p}\equiv \|(1-\Delta)^{b/2}f\|_{p}=\|J^{b}f\|_{p}\simeq \|f\|_{p}+\|D^{b}f\|_{p}\simeq \|f\|_{p}+\|\mathcal{D}^{b}f\|_{p},
\end{equation}
where for $s\in \R$, $D^{s}=(-\Delta)^{s/2}$ with
$D^{s}=(\mathcal{H}\partial_{x})^{s}$ if $n=1.$
\end{itemize}
\end{theorem}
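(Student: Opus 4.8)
The plan is to split the claimed chain of equivalences into two independent pieces: the multiplier comparison $\|J^b f\|_p\simeq \|f\|_p+\|D^b f\|_p$, and the square-function comparison $\|D^b f\|_p\simeq \|\mathcal{D}^b f\|_p$ modulo $\|f\|_p$. Proving these two-sided bounds for all admissible $p$ simultaneously yields that membership in $L^p_b$ is equivalent to conditions (a) and (b), together with the stated norm equivalence.

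For the first piece I would argue purely at the level of Fourier multipliers. The function $m(\xi)=(1+|\xi|^2)^{b/2}/(1+|\xi|^b)$ is smooth, bounded above and below, and, together with $1/m$, satisfies the Mihlin--H\"ormander condition $|\partial^\alpha m(\xi)|\lesssim |\xi|^{-|\alpha|}$, because $(1+|\xi|^2)^{b/2}$ and $1+|\xi|^b$ share the homogeneity $|\xi|^b$ at infinity and are both $\approx 1$ near the origin. Likewise the symbols $1/(1+|\xi|^b)$ and $|\xi|^b/(1+|\xi|^b)$ are Mihlin multipliers. By the Mihlin multiplier theorem all the corresponding operators are bounded on $L^p$ for $1<p<\infty$, and chaining these boundedness statements gives $\|J^b f\|_p\simeq \|f\|_p+\|D^b f\|_p$.

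The core of the matter is the second piece, comparing the fractional derivative with the pointwise difference square function. I would first settle $p=2$ by Plancherel: writing
\[
\mathcal{D}^b f(x)^2=\int_{\R^n}\frac{|f(x)-f(x-z)|^2}{|z|^{n+2b}}\,dz
\]
and using that $f(x)-f(x-z)$ has Fourier symbol $(1-e^{-i\xi\cdot z})\hat f(\xi)$, one gets $\|\mathcal{D}^b f\|_2^2=c_{n,b}\|D^b f\|_2^2$, where $c_{n,b}|\xi|^{2b}=\int_{\R^n}|1-e^{-i\xi\cdot z}|^2|z|^{-n-2b}\,dz$ by scaling and rotation invariance. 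To reach general $p$ I would recast $\mathcal{D}^b$ as a Hilbert-space-valued operator: setting $F(x,z)=(f(x)-f(x-z))/|z|^{n/2+b}$ gives $\mathcal{D}^b f(x)=\|F(x,\cdot)\|_{L^2(dz)}$, hence $\|\mathcal{D}^b f\|_p=\|F\|_{L^p_x(L^2_z)}$. The map $f\mapsto F$ is a convolution operator with an $L^2(dz)$-valued kernel whose $L^2\to L^2(L^2)$ bound is exactly the $p=2$ identity; checking the vector-valued Calder\'on--Zygmund kernel conditions then lets the Hilbert-space-valued theory and the associated Littlewood--Paley/$g$-function machinery upgrade this to $L^p\to L^p(L^2)$ for $1<p<\infty$, yielding $\|\mathcal{D}^b f\|_p\lesssim \|D^b f\|_p$. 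The reverse inequality $\|D^b f\|_p\lesssim \|\mathcal{D}^b f\|_p+\|f\|_p$ I would obtain from the dual representation of $D^b$ as a singular average of the differences of $f$ against a Calder\'on--Zygmund kernel in $z$, again via the vector-valued theory.

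The hard part will be this second comparison for general $p$, and within it the reconstruction of $D^b f$ from the difference square function. The delicacy is that the formal pairing of $F(x,\cdot)$ with the non-$L^2$ weight $|z|^{-n/2}$ is itself a singular integral, so it cannot be estimated pointwise and must be routed through the Hilbert-space-valued Calder\'on--Zygmund theory. This is also precisely where the hypothesis $2n/(n+2b)<p$ is used: it is the threshold $b>n(1/p-1/2)_+$ guaranteeing that the square function genuinely controls the potential in the low-integrability range $p<2$. Once both pieces are in hand, chaining them gives $\|f\|_{b,p}=\|J^b f\|_p\simeq \|f\|_p+\|D^b f\|_p\simeq \|f\|_p+\|\mathcal{D}^b f\|_p$, and the two-sided bounds establish the equivalence of $f\in L^p_b$ with (a) and (b).
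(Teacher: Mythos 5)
Your opening multiplier step and the $p=2$ Plancherel identity are fine, but the core of your plan contains a fatal error: the forward comparison $\|\mathcal{D}^b f\|_p \lesssim \|f\|_p+\|D^b f\|_p$, which you claim to get from vector-valued Calder\'on--Zygmund theory for \emph{all} $1<p<\infty$, is false on part of that range, so no CZ-type argument can prove it. Take $f$ smooth, nonnegative, supported in the ball of radius $2$, with $f\ge 1$ on the unit ball. For $|x|\ge 4$ one has $\big(\mathcal{D}^b f(x)\big)^2=\int_{\R^n}|f(x)-f(y)|^2|x-y|^{-n-2b}\,dy\ \ge\ c\,|x|^{-(n+2b)}$, so $\mathcal{D}^b f\in L^p$ if and only if $p>2n/(n+2b)$; on the other hand $J^b f$ is Schwartz and $\|D^b f\|_p\lesssim\|J^b f\|_p$ by your own multiplier step, so the right-hand side is finite for every $p>1$. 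Hence the inequality fails for all $1<p\le 2n/(n+2b)$, a nonempty range whenever $b<n/2$ (in particular for every $n\ge2$). This also means you have placed the hypothesis $2n/(n+2b)<p$ on the wrong side of the equivalence: it is needed exactly in the direction ``potential controls square function'' --- it is precisely the decay threshold of $\mathcal{D}^b$ applied to a bump --- and not primarily in the reconstruction of $D^b f$ from $\mathcal{D}^b f$.

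One can also see concretely where the vector-valued machinery breaks. The operator whose $L^2\to L^2(L^2_z)$ bound is the Plancherel identity is not $f\mapsto F$ but $D^b f\mapsto F$, i.e.\ the difference operator precomposed with $D^{-b}$ (or $J^{-b}$); its $L^2(dz)$-valued convolution kernel is $K(x)(z)=\big(k_b(x)-k_b(x-z)\big)/|z|^{n/2+b}$, where $k_b$ is the Riesz (or Bessel) kernel, $k_b(y)\sim c|y|^{b-n}$ near $y=0$. Changing variables gives $\|K(x)\|_{L^2_z}=\mathcal{D}^b k_b(x)$, and since $k_b$ fails to be locally square integrable when $2b\le n$, this is $+\infty$ for every $x\ne0$: the kernel violates even the size condition required by the Hilbert-space-valued theory, consistently with (and forced by) the counterexample above. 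The paper itself offers no proof --- Theorem 2.8 is quoted from Stein --- and Stein's actual argument is structurally different from yours: the forward direction is run through the Littlewood--Paley $g_\lambda^*$-function with $\lambda=1+2b/n$, whose $L^p$-boundedness in the range $p\le2$ holds precisely for $p>2/\lambda=2n/(n+2b)$ (this is where the stated range comes from), and the converse direction is then obtained by a separate polarization/duality argument. As written, your outline cannot be repaired within the Calder\'on--Zygmund framework, because the statement you ask that framework to deliver is false.
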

\begin{proof}
See \cite{Stein}.
\end{proof}

\begin{remark}
The operator $\mathcal{D}^b$ introduced in Theorem \ref{stein} is sometimes
referred to as the  Stein derivative of order $b$. The last equivalence in
\eqref{equiv} says that we can compute the norm in $L^{p}_{b}$ by using
either $D^b$ or $\mathcal{D}^{b}$. The advantage in using $\mathcal{D}^{b}$
is that we are able to do point estimates easily (see Proposition
\ref{Pontual} and Lemma \ref{P} below).
\end{remark}

From the previous theorem, part b), with $p=2$ and $b\in(0,1)$, we have
\begin{equation}\label{Leib}
\|\mathcal{D}^{b}(fg)\| \leq \|f\mathcal{D}^{b}g\| + \|g\mathcal{D}^{b}f\|.
\end{equation}

\begin{proposition}\label{Pontual}
Let $b\in (0,1)$. For any $t>0$ and $x\in\R$,
\begin{equation}
\mathcal{D}^{b}(e^{-itx|x|})\leq c(t^{b/2}+t^{b}|x|^{b}).
\end{equation}
\end{proposition}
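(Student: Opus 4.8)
The plan is to exploit a scaling symmetry to reduce the claim to the case $t=1$, and then to estimate the defining integral of the Stein derivative $\mathcal{D}^b$ from Theorem \ref{stein}(b) (with $n=1$) by splitting it at an optimally chosen scale. First, observe that $e^{-itx|x|}=e^{-i(\sqrt{t}\,x)|\sqrt{t}\,x|}$, so writing $g(x):=e^{-ix|x|}$ we have $e^{-itx|x|}=g(\sqrt{t}\,x)$. On the other hand, $\mathcal{D}^b$ is homogeneous under dilations: a change of variables $u=\lambda y$ in the integral defining $\mathcal{D}^b$ gives, for $\lambda>0$, the identity $\mathcal{D}^b\big(g(\lambda\cdot)\big)(x)=\lambda^b(\mathcal{D}^b g)(\lambda x)$. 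Taking $\lambda=\sqrt{t}$ yields $\mathcal{D}^b(e^{-itx|x|})=t^{b/2}(\mathcal{D}^b g)(\sqrt{t}\,x)$. Hence the claimed inequality follows at once from the single bound $\mathcal{D}^b g(z)\le c(1+|z|^b)$, $z\in\R$, since substituting $z=\sqrt{t}\,x$ produces $t^{b/2}\big(1+|\sqrt{t}\,x|^b\big)=t^{b/2}+t^b|x|^b$.

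To bound $\mathcal{D}^b g(z)^2=\int_\R |g(z)-g(y)|^2|z-y|^{-1-2b}\,dy$, I would fix a parameter $\delta>0$ and split the integral into the regions $\{|z-y|\le\delta\}$ and $\{|z-y|>\delta\}$. On the far piece I use the trivial bound $|g(z)-g(y)|\le 2$, which integrates to $c\,\delta^{-2b}$ (this is where $b>0$ is used). On the near piece I use $|g(z)-g(y)|=|e^{-iz|z|}-e^{-iy|y|}|\le |z|z|-y|y||$, and since $\tfrac{d}{ds}(s|s|)=2|s|$, the mean value theorem gives $|z|z|-y|y||\le 2\max(|z|,|y|)\,|z-y|\le 2(|z|+\delta)|z-y|$; the resulting integrand $|z-y|^{1-2b}$ is integrable because $b<1$, producing a term of size $c(|z|+\delta)^2\delta^{2-2b}$. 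Altogether this gives $\mathcal{D}^b g(z)^2\le c\big(\delta^{-2b}+(|z|+\delta)^2\delta^{2-2b}\big)$.

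It remains to optimize in $\delta$, and here lies the only real subtlety. For $|z|\le 1$ I would simply take $\delta=1$ and obtain $\mathcal{D}^b g(z)^2\le c$. For $|z|\ge 1$ the smoothness (mean value) estimate alone would give linear growth in $|z|$, which is far too weak; the point is to balance it against the trivial bound by choosing $\delta=1/|z|\le 1$, so that $(|z|+\delta)^2\le 4|z|^2$ and both terms become comparable to $|z|^{2b}$. This yields $\mathcal{D}^b g(z)^2\le c\,|z|^{2b}$ for $|z|\ge1$, and combining the two ranges gives $\mathcal{D}^b g(z)\le c(1+|z|^b)$, which finishes the proof after undoing the scaling. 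The main obstacle to watch is precisely this large-argument regime: because the chirp $e^{-iz|z|}$ oscillates rapidly there, one cannot rely on differentiability alone and must trade it against the $L^\infty$ bound at the critical scale $\delta\sim|z|^{-1}$, which is exactly the scale singled out by the dilation symmetry above.
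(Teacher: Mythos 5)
Your proof is correct, but there is nothing in the paper to compare it against step by step: the paper's entire ``proof'' of Proposition \ref{Pontual} is the citation ``See \cite{NahasPonce}'', so your argument is a self-contained replacement for an external reference rather than a variant of an internal one. Substantively, what you do is the standard argument for chirp-type phases found in that literature: split the integral defining $\mathcal{D}^b$ at a scale $\delta$, bound the far region by the trivial bound $|g(z)-g(y)|\le 2$ (giving $c\,\delta^{-2b}$, which uses $b>0$), bound the near region by the Lipschitz estimate $|z|z|-y|y||\le 2(|z|+\delta)|z-y|$ coming from the mean value theorem applied to $s\mapsto s|s|$ (giving $c(|z|+\delta)^2\delta^{2-2b}$, which uses $b<1$), and then optimize, taking $\delta=1$ for $|z|\le 1$ and $\delta=|z|^{-1}$ for $|z|\ge 1$. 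All of these steps check out, including the dilation identity $\mathcal{D}^b\bigl(g(\lambda\cdot)\bigr)(x)=\lambda^b(\mathcal{D}^b g)(\lambda x)$, which follows from the substitution $u=\lambda y$ in the Stein integral of Theorem \ref{stein}. The scaling reduction to $t=1$ is a genuinely nice organizational touch: it isolates the single bound $\mathcal{D}^b g(z)\le c(b)(1+|z|^b)$ and makes transparent why the right-hand side carries exactly the two homogeneities $t^{b/2}$ and $t^b|x|^b$, whereas the cited source in effect carries $t$ through the splitting and optimization. It also clarifies the contrast with Lemma \ref{P} of the paper: there the phase $e^{it\eta^2x}$ is linear in $x$, so scaling alone reduces the Stein integral to a fixed convergent integral and no splitting is needed, while here the quadratic chirp forces the trade-off between the oscillation ($L^\infty$) bound and the smoothness bound at the critical scale $\delta\sim|z|^{-1}$, exactly as you point out.
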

\begin{proof}
See  \cite{NahasPonce}.
\end{proof}

We also have the following estimate.

\begin{lemma}\label{P}
Let $b\in (0,1),$ then for all $t>0$ and $x,\eta\in \R$,
$$
\mathcal{D}^{b}(e^{it\eta^{2}x})\leq c(b)\eta^{2b}t^{b},
$$
where $c(b)$ depends only on $b$.
\end{lemma}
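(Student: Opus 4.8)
The plan is to prove the estimate
$$
\mathcal{D}^{b}(e^{it\eta^{2}x})\leq c(b)\,\eta^{2b}t^{b}
$$
by reducing it to the already-established pointwise bound in Proposition \ref{Pontual}. The key observation is that $e^{it\eta^2 x}$ is a pure exponential in $x$ of the form $e^{i\lambda x}$ with frequency $\lambda=t\eta^2$, whereas Proposition \ref{Pontual} handles the chirp $e^{-its x|x|}$. So the first thing I would do is unwind the definition of the Stein derivative for this specific function and exploit the translation/scaling structure of $e^{i\lambda x}$.

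Concretely, writing out part b) of Theorem \ref{stein} with $n=1$, $p=2$, we have
$$
\mathcal{D}^{b}(e^{i\lambda x})=\left(\int_{\R}\frac{|e^{i\lambda x}-e^{i\lambda z}|^{2}}{|x-z|^{1+2b}}\,dz\right)^{1/2}.
$$
Factoring out $e^{i\lambda x}$ (which has modulus one) and substituting $u=\lambda(x-z)$, a direct change of variables shows this integral is independent of $x$ and scales exactly like $|\lambda|^{b}$; that is,
$$
\mathcal{D}^{b}(e^{i\lambda x})=|\lambda|^{b}\left(\int_{\R}\frac{|1-e^{iu}|^{2}}{|u|^{1+2b}}\,du\right)^{1/2}=c(b)\,|\lambda|^{b}.
$$
The remaining integral converges precisely because $b\in(0,1)$: near $u=0$ the numerator $|1-e^{iu}|^{2}\sim u^{2}$ beats the singularity $|u|^{1+2b}$ since $1+2b<3$, and at infinity $|1-e^{iu}|^{2}$ stays bounded while $|u|^{-1-2b}$ is integrable since $1+2b>1$. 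Setting $\lambda=t\eta^{2}$ gives $|\lambda|^{b}=t^{b}\eta^{2b}$ (recalling $t>0$), which is exactly the claimed bound with the constant $c(b)$ equal to the square root of that universal integral.

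The main point to be careful about is the convergence and finiteness of the dimensionless integral $\int_{\R}|1-e^{iu}|^{2}|u|^{-1-2b}\,du$, and the justification that the change of variables is legitimate and produces a constant depending only on $b$; this is the only real content, and it is elementary once the scaling is isolated. An alternative, perhaps closer to the spirit of the paper's reference to Proposition \ref{Pontual}, would be to note that the factor $\eta^2 x$ can be absorbed by rescaling the variable $x$, thereby transferring the problem to a unit-frequency exponential and reading off the power of $\eta^2 t$ from the homogeneity of the Stein derivative under dilations. Either way, no genuine obstacle arises here: the lemma is a clean scaling computation, and the hypothesis $b\in(0,1)$ is exactly what guarantees integrability at both ends.
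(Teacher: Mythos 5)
Your proposal is correct and follows essentially the same route as the paper: both write out the Stein derivative of the exponential, use translation invariance and the change of variables to extract the factor $(\eta^{2}t)^{b}$, and then verify that the universal integral $\int_{\R}|1-e^{iu}|^{2}|u|^{-1-2b}\,du$ is finite precisely because $b\in(0,1)$ (the paper does this by splitting at $|u|=1$ and using $|1-e^{iu}|\leq 2|u|$ near the origin, which is the same check you describe). No meaningful difference in approach or content.
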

\begin{proof}
First note that
\begin{equation*}
\begin{split}
\Big(\mathcal{D}^b(e^{it\eta^{2}x})\Big)^{2}=&\int_{\R}\frac{|e^{it\eta^{2}x}-e^{it\eta^{2}y}|^{2}}{|x-y|^{1+2b}}dy\\
=&\int_{\R}\frac{|e^{it\eta^{2}x}-e^{it\eta^{2}(x-y)}|^{2}}{|y|^{1+2b}}dy\\
=&\int_{\R}\frac{|1-e^{-it\eta^{2}y}|^{2}}{|y|^{1+2b}}dy\\
=&\ (\eta^{2}t)^{2b}\int_{\R}\frac{|1-e^{iy}|^{2}}{|y|^{1+2b}}dy\\
=&\
(\eta^{2}t)^{2b}\left(\int_{-1}^{1}\frac{|1-e^{iy}|^{2}}{|y|^{1+2b}}dy+\int_{|y|>1}
\frac{|1-e^{iy}|^{2}}{|y|^{1+2b}}dy\right).
\end{split}
\end{equation*}
From the inequality $|1-e^{iy}|\leq 2|y|$,  $y\in [-1,1]$, we have
$$
\int_{-1}^{1}\frac{|1-e^{iy}|^{2}}{|y|^{1+2b}}dy\leq 2\int_{-1}^{1}\frac{dy}{|y|^{2b-1}}=4\int_{0}^{1}\frac{dy}{y^{2b-1}}=\frac{2}{1-b}.
$$
Moreover,
$$\int_{|y|>1}\frac{dy}{|y|^{1+2b}}=2\int_{1}^{\infty}\frac{dy}{y^{1+2b}}=\frac{2}{b}.$$
Therefore,
$$\mathcal{D}^{b}(e^{it\eta^{2}x})\leq \left(\frac{2}{1-b}+\frac{2}{b}\right)^{1/2}(\eta^{2}t)^{b}.$$
This completes the proof of the lemma.
\end{proof}

The next proposition will be used in the proof of Theorems \ref{P1}  and
\ref{P2}.
\begin{proposition}\label{localint}
Let $p\in (1,\infty)$. If $f\in L^{p}(\R)$ is such that  there exists $x_0
\in \R$ for which $f(x_{0}^{+}),$ $f(x_{0}^{-})$ are defined and
$f(x_{0}^{+})\neq f(x_{0}^{-}),$ then for any $\delta>0,$
$\mathcal{D}^{1/p}f\notin L^{p}_{loc}(x_0-\delta,x_0+\delta)$ and
consequently $f\notin L^{p}_{1/p}(\R).$
\end{proposition}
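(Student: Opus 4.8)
The plan is to show that if $f$ has a genuine jump discontinuity at $x_0$, then the Stein derivative $\mathcal{D}^{1/p}f$ fails to be $p$-integrable near $x_0$, and then invoke the equivalence in Theorem \ref{stein} (part b), with $n=1$ and $b=1/p$, noting the hypothesis $1<p<\infty$ indeed gives $b=1/p\in(0,1)$ and $2n/(n+2b)=2/(1+2/p)<p$). The heart of the matter is the pointwise lower bound: for $x$ near $x_0$ I want to show that
\[
\mathcal{D}^{1/p}f(x)=\left(\int_{\R}\frac{|f(x)-f(y)|^2}{|x-y|^{1+2/p}}\,dy\right)^{1/2}
\]
is so large that its $p$-th power is not integrable on $(x_0-\delta,x_0+\delta)$. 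The mechanism is that when $x$ sits just to one side of $x_0$, the integrand ``sees'' the jump $|f(x_0^+)-f(x_0^-)|=:J>0$ across the singularity at $y=x_0$, and the weight $|x-y|^{-1-2/p}$ blows up precisely there.

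First I would fix the one-sided limits and choose $\delta$ small enough that $f(y)$ stays within $J/4$ of $f(x_0^+)$ for $y\in(x_0,x_0+\delta)$ and within $J/4$ of $f(x_0^-)$ for $y\in(x_0-\delta,x_0)$; this is possible since the one-sided limits exist. Then, taking $x\in(x_0,x_0+\delta)$ (the right side, say), I would restrict the defining integral to $y\in(x_0-\delta,x_0)$, where $|f(x)-f(y)|\geq J/2$ by the triangle inequality. On that range $|x-y|\le |x-x_0|+\delta$, so I obtain the lower bound
\[
\big(\mathcal{D}^{1/p}f(x)\big)^2\geq \frac{J^2}{4}\int_{x_0-\delta}^{x_0}\frac{dy}{|x-y|^{1+2/p}}.
\]
Computing this elementary integral (with $x>x_0$, so $|x-y|=x-y$ ranges over $(x-x_0,\,x-x_0+\delta)$) produces a term of order $(x-x_0)^{-2/p}$ as $x\to x_0^+$, whence $\mathcal{D}^{1/p}f(x)\geq c\,(x-x_0)^{-1/p}$ for $x$ slightly larger than $x_0$. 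Raising to the $p$-th power gives $\big(\mathcal{D}^{1/p}f(x)\big)^p\gtrsim (x-x_0)^{-1}$, which is non-integrable on any right-neighborhood of $x_0$. Hence $\mathcal{D}^{1/p}f\notin L^p_{loc}(x_0-\delta,x_0+\delta)$.

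The final conclusion $f\notin L^p_{1/p}(\R)$ then follows from the norm equivalence $\|f\|_{1/p,p}\simeq \|f\|_p+\|\mathcal{D}^{1/p}f\|_p$ in Theorem \ref{stein}: if $f$ were in $L^p_{1/p}(\R)$ we would have $\mathcal{D}^{1/p}f\in L^p(\R)$, hence $\mathcal{D}^{1/p}f\in L^p_{loc}$ near $x_0$, contradicting what we just proved. The main obstacle, such as it is, is bookkeeping rather than conceptual: one must make sure the lower bound is extracted only from the part of the integral that genuinely detects the jump (the opposite side of $x_0$ from $x$), so that the near-diagonal behavior of the weight is driven by the fixed gap $J$ rather than by the local continuity of $f$ on the same side. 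I would also take care to handle the symmetric case $x\in(x_0-\delta,x_0)$ identically, and to confirm that the elementary integral indeed yields the sharp exponent $-1/p$ making the $p$-th power scale like $(x-x_0)^{-1}$, exactly the borderline non-integrable rate.
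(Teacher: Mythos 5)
Your proof is correct and follows essentially the same route as the proof the paper relies on: the paper gives no argument of its own but cites Fonseca--Ponce \cite{GermanPonce}, whose proof is precisely your mechanism — for $x$ on one side of $x_0$, bound $\bigl(\mathcal{D}^{1/p}f(x)\bigr)^2$ from below by the integral over the opposite side of the jump, where $|f(x)-f(y)|\geq J/2$, so the exact evaluation of $\int_{x_0-\delta}^{x_0}|x-y|^{-1-2/p}\,dy$ yields $\mathcal{D}^{1/p}f(x)\gtrsim (x-x_0)^{-1/p}$ and hence the borderline non-integrable rate $(x-x_0)^{-1}$ for its $p$-th power. Your final step, deducing $f\notin L^{p}_{1/p}(\R)$ from the equivalence in Theorem \ref{stein} (whose hypotheses you correctly verify for $n=1$, $b=1/p$), matches the cited argument as well.
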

\begin{proof}
See \cite{GermanPonce} (see also \cite{jose}).
\end{proof}

At last, we recall some results to be used in the proof  Theorems \ref{B1},
\ref{P1}, and \ref{P2}.

\begin{lemma}\label{inter}
Let $a,b>0.$ Assume that $J^{a}f=(1-\Delta)^{a/2}f\in L^{2}(\R^2)$ and
$\langle x,y \rangle^b f=(1+x^2 +y^2)^{b/2}f\in L^{2}(\R^2).$ Then for any
$\alpha \in (0,1)$
\begin{equation}\label{inter1}
\|J^{\alpha a}(\langle x, y \rangle^{(1-\alpha)b}f)\|\leq c\|\langle x, y
\rangle^{b}f\|^{1-\alpha}\|J^{a}f\|^{\alpha}.
\end{equation}
Moreover, the inequality \eqref{inter1} is  still valid with $w_{N}(x,y)$
instead of $\langle x, y \rangle$ with a constant $c$ independent of $N.$
\end{lemma}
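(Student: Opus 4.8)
The plan is to obtain \eqref{inter1} from the Hadamard three-lines theorem applied to an analytic family of operators, in the spirit of Stein interpolation (cf. Nahas--Ponce). First I would assume $f\in\mathcal S(\R^2)$, the general case following by density once the constant is seen to be independent of $f$. Fix $G\in\mathcal S(\R^2)$ with $\|G\|=1$ and, for $z$ in the closed strip $\{0\le\mathrm{Re}\,z\le1\}$, set
$$
\Phi(z)=e^{z^2-\alpha^2}\big(J^{za}(\langle x,y\rangle^{(1-z)b}f),\,G\big).
$$
The Gaussian factor $e^{z^2}$ decays like $e^{-(\mathrm{Im}\,z)^2}$ on each vertical line, which will later absorb polynomial growth in $\mathrm{Im}\,z$; for Schwartz $f,G$ the function $\Phi$ is analytic in the interior of the strip and bounded and continuous on its closure. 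Since the Gaussian factor equals $1$ at $z=\alpha$, we have $\Phi(\alpha)=(J^{\alpha a}(\langle x,y\rangle^{(1-\alpha)b}f),G)$, so taking the supremum over $\|G\|=1$ recovers the left-hand side of \eqref{inter1}.

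Next I would bound $\Phi$ on the two edges of the strip. On $\mathrm{Re}\,z=0$, writing $z=i\tau$ with $\tau\in\R$, the Fourier multiplier $J^{i\tau a}$ has unimodular symbol $(1+\xi^2+\eta^2)^{i\tau a/2}$ and is therefore an $L^2$-isometry, while $\langle x,y\rangle^{(1-i\tau)b}=\langle x,y\rangle^{b}\langle x,y\rangle^{-i\tau b}$ with the last factor of modulus one. As $|e^{(i\tau)^2-\alpha^2}|\le1$, this yields $|\Phi(i\tau)|\le\|\langle x,y\rangle^b f\|$. On $\mathrm{Re}\,z=1$, writing $z=1+i\tau$, we have $J^{(1+i\tau)a}=J^{i\tau a}J^a$ and $\langle x,y\rangle^{(1-z)b}=\langle x,y\rangle^{-i\tau b}$, so the quantity to control becomes $\|J^{a}(\langle x,y\rangle^{-i\tau b}f)\|$.

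The main obstacle is precisely this last norm, because for non-integer $a$ the operator $J^a$ is nonlocal and does not commute with the multiplier $m_\tau:=\langle x,y\rangle^{-i\tau b}$. I would resolve it with the boundedness of multiplication on $H^a=L^2_a$: the symbol $m_\tau$ is smooth with $\|\partial^\gamma m_\tau\|_\infty\le c_\gamma(1+|\tau|)^{|\gamma|}$, so (by a Kato--Ponce/commutator estimate, writing $J^a(m_\tau f)=m_\tau J^a f+[J^a,m_\tau]f$) one gets $\|J^{a}(m_\tau f)\|\le c(1+|\tau|)^{N}(\|J^a f\|+\|f\|)$ for some $N$ depending on $a$. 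Since $a>0$ forces $\|f\|\le\|J^a f\|$, and since $|e^{(1+i\tau)^2-\alpha^2}|=e^{1-\tau^2-\alpha^2}$ decays faster than $(1+|\tau|)^N$ grows, the product $e^{-\tau^2}(1+|\tau|)^N$ is bounded and we obtain $|\Phi(1+i\tau)|\le c\|J^a f\|$ with $c$ independent of $\tau$.

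With the two edge bounds in hand, the three-lines theorem gives
$$
|\Phi(\alpha)|\le\Big(\sup_{\mathrm{Re}\,z=0}|\Phi|\Big)^{1-\alpha}\Big(\sup_{\mathrm{Re}\,z=1}|\Phi|\Big)^{\alpha}\le c\,\|\langle x,y\rangle^b f\|^{1-\alpha}\,\|J^a f\|^{\alpha},
$$
and taking the supremum over $\|G\|=1$ establishes \eqref{inter1}. For the version with $w_N$ replacing $\langle x,y\rangle$, I would run the identical argument, observing that the defining properties of $\beta_N$ (namely $\beta_N'\le1$ and $|\beta_N''|\le c\,\partial_x^2\langle x\rangle$) furnish bounds on the derivatives of $w_N$, and hence on $\|\partial^\gamma w_N^{-i\tau b}\|_\infty$, that are uniform in $N$. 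Consequently the multiplier estimate on the line $\mathrm{Re}\,z=1$ holds with a constant independent of $N$, and the resulting inequality inherits the same uniformity.
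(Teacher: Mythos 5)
Your proposal is correct and follows essentially the same route as the paper: the paper's proof of Lemma \ref{inter} simply defers to Lemma 1 of Fonseca--Ponce \cite{GermanPonce}, which is proved by exactly this Hadamard three-lines/Stein interpolation argument (Gaussian factor $e^{z^2}$, $L^2$-isometry of the imaginary powers $J^{i\tau a}$ on $\mathrm{Re}\,z=0$, and a polynomially growing multiplier bound on $\mathrm{Re}\,z=1$ absorbed by the Gaussian). The only loose point is your appeal to the Kato--Ponce commutator estimate for $[J^a,m_\tau]$, which in its standard form requires $J^a m_\tau\in L^2$ and so does not literally apply to the non-decaying function $m_\tau=\langle x,y\rangle^{-i\tau b}$; the needed fact --- that multiplication by $m_\tau$ is bounded on $H^a$ with norm $O((1+|\tau|)^{N})$ --- is nonetheless standard (e.g., by Leibniz at integer orders and interpolation), so this is a citation slip rather than a gap.
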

\begin{proof}
The proof  is similar to that carried out in \cite[Lemma 1]{GermanPonce}.
\end{proof}

\begin{proposition}\label{C}
If $f\in L^{2}(\R)$ and $\phi \in H^{2}(\R),$ then
\begin{equation}
\|[D^{1/2};\phi]f\|_{L^{2}(\R)}\leq c\|\phi\|_{H^{2}(\R)}\|f\|_{L^{2}(\R)}.
\end{equation}
\end{proposition}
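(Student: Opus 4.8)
The plan is to prove the commutator estimate $\|[D^{1/2};\phi]f\| \leq c\|\phi\|_{H^2}\|f\|$ by exploiting the characterization of the homogeneous derivative $D^{1/2}$ through the Stein derivative $\mathcal{D}^{1/2}$ from Theorem \ref{stein}. Recall that in one dimension $D^{1/2}=(\mathcal{H}\partial_x)^{1/2}$, and by \eqref{equiv} with $n=1$, $b=1/2$, $p=2$, the norm $\|D^{1/2}g\|$ is controlled by $\|\mathcal{D}^{1/2}g\|$ up to lower-order terms involving $\|g\|$. Writing out the commutator as $[D^{1/2};\phi]f = D^{1/2}(\phi f) - \phi D^{1/2}f$, I would aim to bound the $L^2$ norm by pointwise estimates on the Stein derivative, which is exactly the advantage highlighted in the remark following Theorem \ref{stein}.

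First I would reduce matters to estimating $\|\mathcal{D}^{1/2}(\phi f) - \phi \mathcal{D}^{1/2}f\|$ and its companion pieces, since the passage between $D^{1/2}$ and $\mathcal{D}^{1/2}$ introduces only benign terms. The natural object to control is the quantity
$$
\mathcal{D}^{1/2}(\phi f)(x) - \phi(x)\mathcal{D}^{1/2}f(x),
$$
where one must be careful because $\mathcal{D}^{1/2}$ is defined through a nonnegative square-root integral, so the commutator is not literally a difference of the integrands. The cleaner route is to use the integral representation directly: for each $x$,
$$
\big(\mathcal{D}^{1/2}g(x)\big)^2=\int_{\R}\frac{|g(x)-g(z)|^2}{|x-z|^{2}}\,dz,
$$
and to write $\phi(x)f(x)-\phi(z)f(z)=\phi(x)(f(x)-f(z))+f(z)(\phi(x)-\phi(z))$. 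The first term, after subtracting $\phi(x)\mathcal{D}^{1/2}f$, cancels, leaving the genuine commutator governed by the factor $\phi(x)-\phi(z)$ tested against $f(z)$. The kernel then carries a difference quotient of $\phi$, and the main estimate becomes controlling
$$
\left\|\left(\int_{\R}\frac{|\phi(x)-\phi(z)|^2}{|x-z|^{2}}|f(z)|^2\,dz\right)^{1/2}\right\|_{L^2_x}.
$$

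The heart of the argument is to bound this double integral by $c\|\phi\|_{H^2}^2\|f\|^2$. By Minkowski's integral inequality and Fubini, after integrating in $x$ the task reduces to showing $\int_{\R}\frac{|\phi(x)-\phi(z)|^2}{|x-z|^{2}}\,dx \leq c\|\phi\|_{H^2}^2$ uniformly in $z$, which is precisely a statement that $\mathcal{D}^{1/2}\phi \in L^\infty$ with the bound controlled by $\|\phi\|_{H^2}$. This follows from the Sobolev embedding $H^2(\R)\hookrightarrow C^1(\R)$ with bounded first derivative: splitting the integral into $|x-z|\leq 1$ and $|x-z|>1$, the near-diagonal part is controlled using $|\phi(x)-\phi(z)|\leq \|\phi'\|_\infty|x-z|$ (giving a convergent integral of $|x-z|^{-1}$ near zero after the square), while the far part uses $|\phi(x)-\phi(z)|\leq 2\|\phi\|_\infty$ against the integrable tail $|x-z|^{-2}$. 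Both $\|\phi'\|_\infty$ and $\|\phi\|_\infty$ are dominated by $\|\phi\|_{H^2}$.

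The main obstacle I anticipate is the careful handling of the square-root nonlinearity in $\mathcal{D}^{1/2}$: because $[D^{1/2};\phi]$ is not simply $D^{1/2}$ applied to a difference, one cannot naively subtract integrands, and the algebraic identity splitting $\phi(x)f(x)-\phi(z)f(z)$ must be combined with the triangle inequality for the $L^2_z$-norm (as in \eqref{Leib}) to isolate the commutator cleanly. Once this is done correctly, the remaining analysis is the uniform bound on $\mathcal{D}^{1/2}\phi$ described above, which is routine. A subtlety worth flagging is that passing from the homogeneous $D^{1/2}$ to $\mathcal{D}^{1/2}$ via Theorem \ref{stein} requires the full inhomogeneous equivalence, so I would keep track of the auxiliary $\|\phi f\|$ and $\|f\|$ terms and absorb them using $\|\phi\|_\infty \lesssim \|\phi\|_{H^2}$.
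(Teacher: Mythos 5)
Your plan founders at its very first step, and the difficulty is structural, not one of bookkeeping. The reduction you propose --- trading the commutator $[D^{1/2};\phi]f=D^{1/2}(\phi f)-\phi D^{1/2}f$ for the Stein-derivative surrogate $\mathcal{D}^{1/2}(\phi f)-\phi\,\mathcal{D}^{1/2}f$ --- has no justification. Theorem \ref{stein} and \eqref{equiv} compare $\|D^{b}g\|$ with $\|\mathcal{D}^{b}g\|$ for \emph{one function $g$ at a time}; since $g\mapsto\mathcal{D}^{b}g$ is nonlinear, no version of that equivalence (homogeneous or inhomogeneous) relates a difference of two different functions under $D^{1/2}$ to the corresponding difference under $\mathcal{D}^{1/2}$. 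What your splitting, \eqref{Leib}, Fubini, and the uniform bound $\|\mathcal{D}^{1/2}\phi\|_{\infty}\leq c(\|\phi\|_{\infty}+\|\phi'\|_{\infty})\leq c\|\phi\|_{H^{2}}$ actually prove is
\begin{equation*}
\bigl\|\mathcal{D}^{1/2}(\phi f)-|\phi|\,\mathcal{D}^{1/2}f\bigr\|\leq c\|\phi\|_{H^{2}}\|f\|,
\end{equation*}
a true statement, but it does not imply Proposition \ref{C}. The only bridge back to $D^{1/2}$ runs through the separate terms $\|D^{1/2}(\phi f)\|$ and $\|\phi D^{1/2}f\|$, and these are not controlled by $\|f\|$ at all: for $f\in L^{2}\setminus H^{1/2}$ they are in general infinite (for such $f$ one has $\|\mathcal{D}^{1/2}f\|=\infty$, and $\mathcal{D}^{1/2}f$ can even fail to be locally square integrable, cf. Proposition \ref{localint}), so your pointwise cancellation becomes an $\infty-\infty$ statement. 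The entire content of the proposition is the cancellation between two individually unbounded pieces, and that cancellation is one of signed, oscillating quantities; the absolute values inside $\mathcal{D}^{1/2}$ erase precisely the structure needed. The ``auxiliary terms'' you propose to absorb at the end are therefore not where the problem lies; the reduction itself is the gap.

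The good news is that your kernel analysis is the right computation applied to the wrong object: run it on the \emph{linear} integral representation of $D^{b}$, $b\in(0,1)$, rather than on the quadratic Stein derivative. For Schwartz functions,
\begin{equation*}
D^{b}g(x)=c_{b}\int_{\R}\frac{g(x)-g(z)}{|x-z|^{1+b}}\,dz,
\end{equation*}
since the Fourier multiplier of the right-hand side equals $|\xi|^{b}\int_{\R}(1-\cos v)\,|v|^{-1-b}dv$. Inserting your decomposition $\phi(x)f(x)-\phi(z)f(z)=\phi(x)(f(x)-f(z))+f(z)(\phi(x)-\phi(z))$, the first part reconstructs $\phi(x)D^{1/2}f(x)$ \emph{exactly}, so that
\begin{equation*}
[D^{1/2};\phi]f(x)=c\int_{\R}\frac{(\phi(x)-\phi(z))\,f(z)}{|x-z|^{3/2}}\,dz,
\end{equation*}
a genuine integral operator whose kernel carries your difference quotient of $\phi$. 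Your near/far splitting is then precisely the Schur test: on $|x-z|\leq1$ the kernel is bounded by $\|\phi'\|_{\infty}|x-z|^{-1/2}$, on $|x-z|>1$ by $2\|\phi\|_{\infty}|x-z|^{-3/2}$, so its integral in each variable is at most $c(\|\phi\|_{\infty}+\|\phi'\|_{\infty})\leq c\|\phi\|_{H^{2}}$, and a density argument in $f$ and $\phi$ finishes. (Minor point: in your Stein computation the near-diagonal integrand is bounded by $\|\phi'\|_{\infty}^{2}$, not of size $|x-z|^{-1}$; harmless, but worth fixing.) For comparison, the paper itself offers no proof of Proposition \ref{C}: it is a known commutator estimate from the Benjamin--Ono literature (see \cite{GermanPonce}), and the standard proofs are either the kernel/Schur argument just described or a Fourier-side argument based on $\bigl||\xi|^{1/2}-|\sigma|^{1/2}\bigr|\leq|\xi-\sigma|^{1/2}$ and Young's inequality, which gives the sharper bound $c\,\||\xi|^{1/2}\hat{\phi}\|_{L^{1}}\|f\|\leq c\|\phi\|_{H^{2}}\|f\|$.
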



\section{Local well-posedness in Sobolev spaces}\label{localwp}

In this section, we are concerned with local well-posedness in (anisotropic)
Sobolev spaces. We only prove Theorem  \ref{105}. The ideas are by now quite
standard, so we only sketch the main steps. The arguments are based on that
proposed  in \cite{Iorio}. Let $\mu>0$ and consider the following
perturbation of \eqref{bozk}:
\begin{equation}\label{2}
\begin{cases}
u_{t}+\mathcal{H}\partial_{x}^{2}u+u_{xyy}+uu_{x}=\mu\Delta u , \;\;(x,y)\in\R^2, \;t>0, \\
u(x,y,0)=\phi(x,y).
\end{cases}
\end{equation}
Let us first consider the linear IVP
\begin{equation}\label{2.1a}
    \begin{cases}
{\displaystyle u_t+ \mathcal{H}\partial_x^2u+ u_{xyy}-\mu\Delta u =  0,  }  \qquad (x,y) \in \mathbb{R}^2, \,\,\,\, t>0, \\
{\displaystyle  u(x,y,0)=\phi(x,y)}.
\end{cases}
\end{equation}
The solution of \eqref{2.1a} is given by
\begin{equation}\label{2.2a}
u(t)=E_\mu(t)\phi(x,y)= \int_{\mathbb{R}^2} e^{i\big(t(-\xi
|\xi|+\xi\eta^2)+x\xi+y\eta\big)-t\mu(\xi^2+\eta^2)}\hat{\phi}(\xi,\eta) d\xi
d\eta.
\end{equation}

A straightforward calculation reveals the following.

\begin{proposition}\label{propa1}
Let $\lambda_1, \lambda_2\in[0,\infty)$ and $\mu>0.$ Then,
\begin{itemize}
    \item [a)] for any $t>0$ and $s_1,s_2\in \R$, $E_{\mu}(t)$ is a bounded
        operator from $H^{s_1,s_2}$ to $H^{s_1+\lambda_1,s_2
        +\lambda_2}$. Moreover
$$
\|E_{\mu}(t)\phi\|_{s_1+\lambda_1,s_2+\lambda_2}\leq
C_{\lambda_1,\lambda_2,\mu}
(1+t^{-\lambda_{1}/2}+t^{-\lambda_{2}/2})\|\phi\|_{s_1,s_2}, \quad \phi\in
H^{s_1,s_2},
$$
and the map $t\in (0,\infty)\longmapsto E_{\mu}(t)\phi\in
H^{s_1+\lambda_1,s_2 +\lambda_2}$ is continuous.

    \item [b)]  $E_{\mu}(t)$ is a semigroup of contractions in $H^{s_1,s_2}$ and  can
        be extended, when $\mu=0$, to a unitary group.
\end{itemize}
\end{proposition}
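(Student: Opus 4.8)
The goal is to establish the smoothing and semigroup properties of the operator $E_\mu(t)$ defined by the Fourier multiplier in \eqref{2.2a}, whose symbol is
$$
\widehat{E_\mu(t)\phi}(\xi,\eta)=e^{it(-\xi|\xi|+\xi\eta^2)-t\mu(\xi^2+\eta^2)}\hat\phi(\xi,\eta).
$$
The plan is to read everything off directly from this symbol using Plancherel, since $E_\mu(t)$ is diagonalized on the Fourier side. The key observation is that the purely dispersive part of the phase, $-\xi|\xi|+\xi\eta^2$, is real, so it contributes a factor of modulus one; all the gain of regularity comes entirely from the real parabolic factor $e^{-t\mu(\xi^2+\eta^2)}$.

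For part a), I would start from the definition of the $H^{s_1+\lambda_1,s_2+\lambda_2}$ norm, which by Plancherel amounts to integrating $|\widehat{E_\mu(t)\phi}|^2$ against the weights $1$, $(1+\xi^2)^{s_1+\lambda_1}$, and $(1+\eta^2)^{s_2+\lambda_2}$. After inserting the symbol, the dispersive exponential drops out in modulus and the task reduces to the pointwise bound
$$
(1+\xi^2)^{\lambda_1/2}(1+\eta^2)^{\lambda_2/2}e^{-t\mu(\xi^2+\eta^2)}\leq C_{\lambda_1,\lambda_2,\mu}\,(1+t^{-\lambda_1/2}+t^{-\lambda_2/2}),
$$
so that the extra derivatives can be absorbed into the decaying Gaussian. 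This is the standard parabolic-smoothing estimate: for each variable one bounds $\xi^{\lambda_1}e^{-t\mu\xi^2}$ by its maximum, which behaves like $t^{-\lambda_1/2}$, while the harmless constant terms in $(1+\xi^2)^{\lambda_1/2}$ account for the summand $1$. Once this multiplier bound is in place, the norm inequality follows immediately, and continuity of $t\mapsto E_\mu(t)\phi$ on $(0,\infty)$ follows from dominated convergence, using the same integrable majorant coming from the Gaussian decay.

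For part b), when $\mu>0$ the factor $e^{-t\mu(\xi^2+\eta^2)}\le 1$ makes the symbol bounded by one in modulus, so $\|E_\mu(t)\phi\|_{s_1,s_2}\le\|\phi\|_{s_1,s_2}$ and $E_\mu(t)$ is a contraction; the semigroup identity $E_\mu(t)E_\mu(t')=E_\mu(t+t')$ is verified at the level of symbols since the exponents simply add. When $\mu=0$ the parabolic factor disappears and the symbol has modulus exactly one, so Plancherel gives an isometry; the group property and invertibility (with inverse $E_0(-t)$) again follow by adding exponents, allowing the extension to a unitary group on $H^{s_1,s_2}$.

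I do not expect a serious obstacle here, since the operator is an explicit Fourier multiplier and every assertion reduces to an elementary pointwise estimate on its symbol. The only point requiring mild care is the precise form of the smoothing constant: one must check that separating the weight $(1+\xi^2+\eta^2)$ contributions in the two variables genuinely produces the additive structure $1+t^{-\lambda_1/2}+t^{-\lambda_2/2}$ rather than a product, which is why the anisotropic norm is used with the three separate pieces $\|\cdot\|$, $\|J_x^{s_1}\cdot\|$, and $\|J_y^{s_2}\cdot\|$. Handling each piece independently keeps the two smoothing scales $\lambda_1$ and $\lambda_2$ decoupled and yields the stated bound.
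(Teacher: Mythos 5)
Your strategy is the right one, and it is exactly what lies behind the paper's one-line proof (the paper simply cites Iorio's Theorem 2.1): $E_\mu(t)$ is an explicit Fourier multiplier, the dispersive phase $e^{it(-\xi|\xi|+\xi\eta^2)}$ has modulus one, all smoothing comes from the Gaussian factor via the one-variable maximization $|\xi|^{\lambda}e^{-t\mu\xi^2}\leq C_{\lambda,\mu}\,t^{-\lambda/2}$, continuity follows from dominated convergence, and part b) is pure symbol calculus. However, the displayed inequality to which you claim the task reduces is false as stated. Taking $\xi^2=\eta^2=1/(2t\mu)$ gives
$$
(1+\xi^2)^{\lambda_1/2}(1+\eta^2)^{\lambda_2/2}e^{-t\mu(\xi^2+\eta^2)}\ \geq\ c_{\mu,\lambda_1,\lambda_2}\, t^{-(\lambda_1+\lambda_2)/2},
$$
which for small $t$ cannot be controlled by $C(1+t^{-\lambda_1/2}+t^{-\lambda_2/2})$ whenever both $\lambda_1,\lambda_2>0$: a joint bound of that form can only yield the product $(1+t^{-\lambda_1/2})(1+t^{-\lambda_2/2})$, and its cross term $t^{-(\lambda_1+\lambda_2)/2}$ is precisely what the proposition's estimate excludes.

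The correct reduction never forms this product. Since
$$
\|f\|^{2}_{s_1+\lambda_1,s_2+\lambda_2}=\|f\|^{2}+\|J_x^{s_1+\lambda_1}f\|^{2}+\|J_y^{s_2+\lambda_2}f\|^{2},
$$
each of the three terms carries only one of the two weights, so one needs only the two separate one-variable bounds $(1+\xi^2)^{\lambda_1/2}e^{-t\mu\xi^2}\leq C(1+t^{-\lambda_1/2})$ and $(1+\eta^2)^{\lambda_2/2}e^{-t\mu\eta^2}\leq C(1+t^{-\lambda_2/2})$; summing the three estimates and taking a square root gives exactly the additive constant $1+t^{-\lambda_1/2}+t^{-\lambda_2/2}$. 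Your own closing paragraph says precisely this (handling the pieces independently keeps the scales decoupled), so the repair is already contained in your write-up: delete the joint pointwise display and run the argument term by term on the three pieces of the anisotropic norm. With that correction the proof is complete; the remaining points (dominated convergence for continuity on $(0,\infty)$, additivity of exponents for the semigroup law, contraction from the symbol having modulus at most one, and isometry plus the group law giving the unitary extension when $\mu=0$) are all fine as you state them.
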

\begin{proof}
See for instance \cite[Theorem 2.1]{Iorio}.
\end{proof}

To proceed with the arguments, we need to use that $H^{s_1,s_2}$ is a Banach
algebra. So, we prove the following.

\begin{proposition}
Let $u,v \in H^{s_1,s_2}$, with $s_1,s_2>1$. Then
$$\|uv\|_{s_1,s_2}\leq c_{s_1s_2}\|u\|_{s_1,s_2}\|v\|_{s_1,s_2}.$$
\end{proposition}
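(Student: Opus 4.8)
The plan is to prove that $H^{s_1,s_2}$ is a Banach algebra by reducing the anisotropic estimate to the well-known isotropic Sobolev algebra property, exploiting the fact that the anisotropic norm controls enough regularity in each variable separately. First I would recall the characterization of the anisotropic norm, namely $\|f\|_{s_1,s_2}^2 \simeq \|f\|^2 + \|J_x^{s_1}f\|^2 + \|J_y^{s_2}f\|^2$, and observe that it suffices to bound the three pieces $\|uv\|$, $\|J_x^{s_1}(uv)\|$, and $\|J_y^{s_2}(uv)\|$ individually. The $L^2$ piece is the easiest and will follow from a Sobolev embedding combined with Hölder's inequality once we have pointwise control of $u$ (note that $s_1,s_2>1$ is exactly what we need to embed into $L^\infty$).

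Next I would treat $\|J_x^{s_1}(uv)\|$, which is the core of the matter. The natural tool is the fractional Leibniz (Kato-Ponce) rule in the $x$-variable applied for each fixed $y$, giving pointwise-in-$y$ bounds of the schematic form
\begin{equation*}
\|J_x^{s_1}(uv)(\cdot,y)\|_{L^2_x} \leq c\big(\|J_x^{s_1}u(\cdot,y)\|_{L^2_x}\|v(\cdot,y)\|_{L^\infty_x} + \|u(\cdot,y)\|_{L^\infty_x}\|J_x^{s_1}v(\cdot,y)\|_{L^2_x}\big).
\end{equation*}
Squaring, integrating in $y$, and using that $s_2>1$ permits controlling the $L^\infty_x$ factors uniformly in $y$ via the anisotropic embedding $H^{s_1,s_2}(\R^2)\hookrightarrow L^\infty(\R^2)$ (valid precisely because $s_1,s_2>1$), one arrives at the desired product bound. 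The piece $\|J_y^{s_2}(uv)\|$ is handled symmetrically, applying the fractional Leibniz rule in $y$ for each fixed $x$ and then integrating in $x$.

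The main obstacle I expect is the careful interchange of the one-variable fractional Leibniz estimates with the $L^2$-integration in the transverse variable, and in particular establishing the anisotropic Sobolev embedding $H^{s_1,s_2}\hookrightarrow L^\infty$ with a uniform (in the frozen variable) $L^\infty$ bound. A clean way to obtain this embedding is to write $\|\hat{f}\|_{L^1} \leq \|\langle\xi\rangle^{-s_1}\langle\eta\rangle^{-s_2}\|_{L^2_{\xi\eta}}\,\|\langle\xi\rangle^{s_1}\langle\eta\rangle^{s_2}\hat{f}\|_{L^2_{\xi\eta}}$ and check that $\langle\xi\rangle^{-s_1}\langle\eta\rangle^{-s_2}\in L^2(\R^2)$ exactly when $s_1,s_2>1$, while $\langle\xi\rangle^{s_1}\langle\eta\rangle^{s_2}\hat f$ is controlled by $\|f\|_{s_1,s_2}$ up to equivalence of the multiplier $\langle\xi\rangle^{s_1}\langle\eta\rangle^{s_2}$ with $1+\langle\xi\rangle^{s_1}+\langle\eta\rangle^{s_2}$. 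Once the embedding and the fiberwise Leibniz estimates are in hand, combining the three pieces yields $\|uv\|_{s_1,s_2}\leq c_{s_1s_2}\|u\|_{s_1,s_2}\|v\|_{s_1,s_2}$, completing the proof.
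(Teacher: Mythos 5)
Your overall strategy is exactly the one in the paper: establish the embedding $H^{s_1,s_2}\hookrightarrow L^{\infty}$ (the paper's bound \eqref{embLinfty}, which it states without proof), apply the Kato--Ponce Leibniz estimate (Lemma X4 of \cite{KP}) in the $x$-variable for each frozen $y$, take the $L^{2}_{y}$ norm bounding the $L^{\infty}_{x}$ factors by $\|\cdot\|_{L^{\infty}_{xy}}\leq c\|\cdot\|_{s_1,s_2}$, and then argue symmetrically for $J_{y}^{s_2}$. That part of your proposal is sound and coincides with the paper's proof.

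The gap is in your justification of the embedding itself. You bound $\|\hat f\|_{L^{1}}$ by Cauchy--Schwarz against $\langle\xi\rangle^{-s_1}\langle\eta\rangle^{-s_2}$ and then claim $\|\langle\xi\rangle^{s_1}\langle\eta\rangle^{s_2}\hat f\|_{L^{2}}\lesssim\|f\|_{s_1,s_2}$ via an ``equivalence'' of the multiplier $\langle\xi\rangle^{s_1}\langle\eta\rangle^{s_2}$ with $1+\langle\xi\rangle^{s_1}+\langle\eta\rangle^{s_2}$. That equivalence is false: a product of this kind is not dominated by a sum. The $H^{s_1,s_2}$ norm is comparable to $\|(1+\langle\xi\rangle^{2s_1}+\langle\eta\rangle^{2s_2})^{1/2}\hat f\|_{L^{2}}$, and taking $\hat f=\mathbf{1}_{[R,R+1]^{2}}$ gives
$$
\|f\|_{s_1,s_2}^{2}\simeq R^{2\max(s_1,s_2)}
\qquad\text{while}\qquad
\|\langle\xi\rangle^{s_1}\langle\eta\rangle^{s_2}\hat f\|_{L^{2}}^{2}\simeq R^{2(s_1+s_2)},
$$
so the claimed control fails as $R\to\infty$; the tensor-product norm defines a strictly smaller space than $H^{s_1,s_2}$ as defined in the paper. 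The repair is straightforward: run Cauchy--Schwarz with the weight actually attached to the norm,
$$
\|\hat f\|_{L^{1}}\leq
\Big\|\big(1+\langle\xi\rangle^{2s_1}+\langle\eta\rangle^{2s_2}\big)^{-1/2}\Big\|_{L^{2}_{\xi\eta}}
\Big\|\big(1+\langle\xi\rangle^{2s_1}+\langle\eta\rangle^{2s_2}\big)^{1/2}\hat f\Big\|_{L^{2}_{\xi\eta}},
$$
where the second factor is $\simeq\|f\|_{s_1,s_2}$. For the first factor, weighted AM--GM gives $1+\langle\xi\rangle^{2s_1}+\langle\eta\rangle^{2s_2}\geq \langle\xi\rangle^{2s_1\alpha}\langle\eta\rangle^{2s_2\beta}$ for any $\alpha,\beta\geq 0$ with $\alpha+\beta\leq 1$; choosing $\alpha>\tfrac{1}{2s_1}$ and $\beta>\tfrac{1}{2s_2}$ with $\alpha+\beta\leq 1$ is possible precisely because $s_1,s_2>1$ forces $\tfrac{1}{2s_1}+\tfrac{1}{2s_2}<1$, and this makes the weight square-integrable. (A smaller slip: $\langle\xi\rangle^{-s_1}\langle\eta\rangle^{-s_2}\in L^{2}(\R^{2})$ already for $s_1,s_2>1/2$, not ``exactly when $s_1,s_2>1$''; the true role of $s_1,s_2>1$ is the one just described.) With the embedding corrected in this way, the remainder of your argument goes through verbatim and reproduces the paper's proof.
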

\begin{proof}
It is easy to see that, for all $f\in H^{s_1,s_2}$,
\begin{equation}\label{embLinfty}
\|f\|_{\infty}\leq c_{s_1s_2}\|f\|_{s_1,s_2}.
\end{equation}
Now, fixing $y$ in Lemma X4 of \cite{KP}, we have
$$
\|J^{s_1}_x(uv)\|_{L^2_x}\leq
c(\|u\|_{L^{\infty}_{x}}\|J_{x}^{s_1}v\|_{L^{2}_{x}}+\|v\|_{L^{\infty}_{x}}\|J_{x}^{s_1}u\|_{L^{2}_{x}}).
$$
Now, taking the $L^2$ norm with respect to $y$, using Holder's inequality and
\eqref{embLinfty}, we deduce
\begin{equation}\label{2002}
\begin{split}
\|J_{x}^{s_1}(uv)\|=& \|\|J_{x}^{s_1}(uv)\|_{L^{2}_{x}}\|_{L^{2}_{y}}\\
 \leq & c(\|\|u\|_{L^{\infty}_{x}}\|J_{x}^{s_1}v\|_{L^{2}_{x}}+\|v\|_{L^{\infty}_{x}}\|J_{x}^{s_1}u\|_{L^{2}_{x}}\|_{L^{2}_{y}})\\
\leq& \ c(\|u\|_{L^{\infty}_{xy}}\|\|J_{x}^{s_1}v\|_{L^{2}_{x}}\|_{L^{2}_{y}}+\|v\|_{L^{\infty}_{xy}}\|\|J_{x}^{s_1}u\|_{L^{2}_{x}}\|_{L^{2}_{y}})\\
\leq & \ c_{s_1,s_2}(\|u\|_{s_1,s_2}\|J_{x}^{s_1}v\|+\|v\|_{s_1,s_2}\|J_{x}^{s_1}u\|)\\
\leq & \ c_{s_1,s_2} \|u\|_{s_1,s_2}\|v\|_{s_1,s_2}.
\end{split}
\end{equation}
Analogously,
\begin{eqnarray}\label{2003}
\|J_{y}^{s_2}(uv)\|\leq c_{s_1s_2} \|u\|_{s_1,s_2}\|v\|_{s_1,s_2}.
\end{eqnarray}
The result then follows from \eqref{2002} and \eqref{2003}.
\end{proof}

With these tools in hand, we can prove the local well-posedness of \eqref{2}.

\begin{theorem}\label{modtheorem}
Let $\mu>0,$ and $\phi\in H^{s_1,s_2}$, where $s_1,s_2>1$ and $s_1\geq s_2.$
Then there exist $T_\mu=T_\mu(\|\phi\|_{s_1,s_2},\mu)$ and a unique $u_\mu
\in C([0,T_\mu];H^{s_1,s_2}),$  satisfying the integral equation
\begin{equation}\label{integralequation}
u_{\mu}(t)=E_{\mu}(t)\phi-\int_{0}^{t}E_{\mu}(t-t')\frac{1}{2}\partial_x(u_\mu^2)(t')dt'.
\end{equation}
\end{theorem}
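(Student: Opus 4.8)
The plan is to prove Theorem \ref{modtheorem} via a standard contraction-mapping (Picard) argument applied to the integral equation \eqref{integralequation}, exploiting the strong smoothing of the parabolic semigroup $E_\mu(t)$. Define the map
$$
\Psi(u)(t)=E_{\mu}(t)\phi-\int_{0}^{t}E_{\mu}(t-t')\tfrac{1}{2}\partial_x(u^2)(t')\,dt',
$$
and seek a fixed point in a closed ball of the Banach space $X_{T}=C([0,T];H^{s_1,s_2})$ equipped with the norm $\|u\|_{X_T}=\sup_{t\in[0,T]}\|u(t)\|_{s_1,s_2}$. First I would fix the ball $B=\{u\in X_T : \|u\|_{X_T}\leq M\}$ with, say, $M=2\|\phi\|_{s_1,s_2}$, and show that for $T=T_\mu$ small enough $\Psi$ maps $B$ into itself and is a contraction there. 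The linear term is handled by part (b) of Proposition \ref{propa1}, which gives $\|E_\mu(t)\phi\|_{s_1,s_2}\leq\|\phi\|_{s_1,s_2}$ since $E_\mu$ is a contraction semigroup on $H^{s_1,s_2}$.

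The core of the argument is estimating the Duhamel term. I would use part (a) of Proposition \ref{propa1} with $\lambda_1=\lambda_2=1$ to absorb the single $x$-derivative in $\partial_x(u^2)$: writing $\partial_x(u^2)=J_x(J_x^{-1}\partial_x(u^2))$ and noting $\|\partial_x f\|_{s_1,s_2}\lesssim \|f\|_{s_1+1,s_2}$, the smoothing estimate yields, for $0<t'<t\leq T$,
$$
\Big\|E_\mu(t-t')\tfrac12\partial_x(u^2)(t')\Big\|_{s_1,s_2}\leq C_\mu\big(1+(t-t')^{-1/2}\big)\|u^2(t')\|_{s_1,s_2}.
$$
By the Banach-algebra property just established (the preceding Proposition, valid since $s_1,s_2>1$), $\|u^2(t')\|_{s_1,s_2}\leq c_{s_1s_2}\|u(t')\|_{s_1,s_2}^2\leq c_{s_1s_2}M^2$. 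Integrating and using that $\int_0^t(1+(t-t')^{-1/2})\,dt'\leq T+2T^{1/2}$ is integrable near the singularity, I obtain
$$
\|\Psi(u)(t)\|_{s_1,s_2}\leq \|\phi\|_{s_1,s_2}+C_\mu c_{s_1s_2}M^2\,(T+2\sqrt{T}),
$$
so choosing $T=T_\mu(\|\phi\|_{s_1,s_2},\mu)$ small forces the right-hand side below $M$, giving $\Psi(B)\subseteq B$.

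For the contraction estimate I would repeat the computation for the difference, using the algebraic identity $u^2-v^2=(u+v)(u-v)$ together with the algebra property to get $\|\partial_x(u^2-v^2)(t')\|_{s_1+1,\,\ldots}$-type control by $\|u+v\|_{s_1,s_2}\|u-v\|_{s_1,s_2}\leq 2M\|u-v\|_{X_T}$; the same time integral then yields $\|\Psi(u)-\Psi(v)\|_{X_T}\leq C_\mu c_{s_1s_2}\cdot 2M\,(T+2\sqrt T)\,\|u-v\|_{X_T}$, which is a strict contraction after shrinking $T$ further if necessary. The Banach fixed-point theorem then produces the unique $u_\mu\in C([0,T_\mu];H^{s_1,s_2})$ solving \eqref{integralequation}, and continuity in time is inherited from the continuity of $t\mapsto E_\mu(t)\phi$ asserted in Proposition \ref{propa1}(a) together with the continuity of the Duhamel integral. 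The main technical obstacle is the integrable singularity $(t-t')^{-1/2}$ coming from the smoothing estimate: one must confirm that the two gained derivatives ($\lambda_1=\lambda_2=1$) suffice to control the full $\partial_x(u^2)$ in the $H^{s_1,s_2}$ norm while keeping the time weight integrable, which is precisely why $\lambda_1=\lambda_2=1$ (rather than needing more than one derivative in a single variable) is exactly matched to the first-order nonlinearity; the hypothesis $s_1\geq s_2$ is what ensures the $x$-derivative can be absorbed without demanding extra $y$-regularity.
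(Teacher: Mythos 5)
Your overall strategy coincides with the paper's: a Banach fixed-point argument for the Duhamel map in $C([0,T];H^{s_1,s_2})$, using the algebra property of $H^{s_1,s_2}$ (for $s_1,s_2>1$) and the parabolic smoothing of $E_\mu$ from Proposition \ref{propa1} to absorb the derivative in $\partial_x(u^2)$ at the cost of an integrable time singularity. The paper works in the ball $\{f:\|f(t)-E_\mu(t)\phi\|_{s_1,s_2}\le\|\phi\|_{s_1,s_2}\}$ rather than your ball of radius $2\|\phi\|_{s_1,s_2}$, but that difference is immaterial.

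There is, however, one step whose justification is false as written: the inequality $\|\partial_x f\|_{s_1,s_2}\lesssim\|f\|_{s_1+1,s_2}$ does not hold in anisotropic spaces. The component $\|J_y^{s_2}\partial_x f\|$ carries the mixed Fourier multiplier $|\xi|(1+\eta^2)^{s_2/2}$, and a genuine product of powers in $\xi$ and $\eta$ is not dominated by the sum $1+(1+\xi^2)^{(s_1+1)/2}+(1+\eta^2)^{s_2/2}$: fixing $|\xi|=L$ and letting $|\eta|\to\infty$, the ratio of the two sides grows like $L$, so no uniform constant exists. This mixed term is exactly the crux of the anisotropic setting, and it is where the paper does its real work: it estimates $\|J_x^{s_1}E_\mu(t-t')\partial_x f^2\|$ and $\|J_y^{s_2}E_\mu(t-t')\partial_x f^2\|$ separately, handling the latter with Young's inequality with exponents tied to $s_1$ and a singularity $(t-t')^{-\alpha/2}$, $\alpha\in[s_2/s_1,1]$ --- which is precisely where the hypothesis $s_1\ge s_2$ enters. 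Fortunately, the displayed estimate you deduce, $\|E_\mu(t-t')\partial_x(u^2)\|_{s_1,s_2}\lesssim\big(1+(t-t')^{-1/2}\big)\|u^2\|_{s_1,s_2}$, is nevertheless true, and your setup essentially contains the repair: either argue directly on the Fourier side, noting $|\xi|e^{-\mu(t-t')(\xi^2+\eta^2)}\le c\,\mu^{-1/2}(t-t')^{-1/2}$, which controls all three pieces $\|\cdot\|$, $\|J_x^{s_1}\cdot\|$, $\|J_y^{s_2}\cdot\|$ of the norm at once; or replace the false embedding by the correct one, $\|\partial_x f\|_{s_1,s_2}\lesssim\|f\|_{s_1+1,s_2+1}$, which does hold when $s_1\ge s_2$ (by Young's inequality, $(1+\eta^2)^{s_2/2}(1+\xi^2)^{1/2}\le c\big[(1+\xi^2)^{(s_1+1)/2}+(1+\eta^2)^{s_2(s_1+1)/(2s_1)}\big]$ with $s_2(s_1+1)/s_1\le s_2+1$), and then apply Proposition \ref{propa1}(a) with $\lambda_1=\lambda_2=1$ exactly as you intended. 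With that correction, your self-mapping, contraction, and continuity steps go through and yield the theorem.
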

\begin{proof}
The proof is based on the contraction principle. Consider the complete metric
space
$$
X_{s_1,s_2}(T)=\Big\{f\in C([0,T];H^{s_1,s_2})\ | \ \|f(t)-E(t)\phi\|_{s_1,
s_2}\leq \|\phi\|_{s_1,s_2}, \forall t\in [0,T]\Big\},
$$
with the supremum norm. Define the operator
$$
Af(t)=E_{\mu}(t)\phi-\int_{0}^{t}E_{\mu}(t-t')(ff_{x})(t')dt'.
$$
The idea is to show that $A$ has a unique fixed point in $X_{s_1,s_2}(T)$ for
some $T>0$ to be chosen later. For any $f\in X_{s_1,s_2}(T)$, from
Proposition \ref{propa1}, we get
\begin{equation*}
\begin{split}
\|J_{x}^{s_{1}}E(t-t')\partial_{x}f^2\|\leq& \ c_{\mu}(1+(t-t')^{-1/2})\|J_{x}^{s_{1}-1}\partial_{x}f^2\|\\
\leq &\ c_{\mu,s_1,s_2}(1+(t-t')^{-1/2})\|\phi\|_{s_1s_2}^2.
\end{split}
\end{equation*}
Since $\frac{s_2}{s_1}\leq 1$, there exists a real number $\alpha$ satisfying
$\frac{s_2}{s_1} \leq \alpha\leq 1$. Thus,
\begin{equation*}
\begin{split}
\|J_{y}^{s_{2}}E(t-t')\partial_{x}f^2\|\leq&
c_{\alpha,\mu}(1+(t-t')^{-\alpha/2})(\|J_{x}^{s_1}f^2\|+\|J_{y}^{s_2}f^2\|)\\
\leq & \ c_{\alpha,\mu}(1+(t-t')^{-\alpha/2})\|f^2\|_{s_1,s_2}\\
\leq &  \ c_{\alpha,\mu}(1+(t-t')^{-\alpha/2})\|\phi\|_{s_1,s_2}^2,
\end{split}
\end{equation*}
where  we have used Plancherel's theorem and Young's inequality (with
$p=s_1$, $q=\frac{s_1}{s_{1} -1}$). Therefore, from the above inequalities it
transpire that
$$
\|Af(t)-E(t)\phi\|_{s_1,s_2}\leq
\left[c\|\phi\|_{s_1,s_2}\int_{0}^{t}(1+(t-t')^{-1/2}+(t-t')^{-\alpha/2})dt'\right]\|\phi\|_{s_1,s_2}.
$$
As a consequence, there exists a $T_{\mu}'=T_{\mu}'(\mu,\|\phi\|_{s_1,s_2})$
such that $A:X_{s_1,s_2}(T_{\mu}')\to X_{s_1,s_2}(T_{\mu}')$. By using
similar estimates we also show that  $A:X_{s_1,s_2}(T_\mu)\to
X_{s_1,s_2}(T_\mu)$ is a contraction. The Banach fixed point theorem gives
the desired. This completes the proof of the theorem.
\end{proof}

\begin{remark}\label{bootstrapping}
Using the integral equation \eqref{integralequation},  part a) in Proposition
\ref{propa1}, and a bootstrapping argument we can show that ${\displaystyle
u_\mu \in H^{\infty,\infty}=\bigcap_{s_1,s_2\in \R} H^{s_1,s_2}}$ for all
$t\in (0,T]$ and $\mu>0$ (see, for instance, \cite[Theorem 3.3]{Iorio}).
\end{remark}

\begin{proposition}\label{modprop}
Let $s_2>2$ and $s_1 \geq s_2$. If $u\in \mathcal{S}(\R^2)$ is real then
$$|(u,uu_{x})_{s_1,s_2}|\leq c\|u\|^{3}_{s_1, s_2}.$$
\end{proposition}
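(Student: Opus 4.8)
The plan is to expand the scalar product according to the definition of $(\cdot,\cdot)_{s_1,s_2}$, namely
$$
(u,uu_x)_{s_1,s_2}=(u,uu_x)+(J_x^{s_1}u,J_x^{s_1}(uu_x))+(J_y^{s_2}u,J_y^{s_2}(uu_x)),
$$
and to estimate the three terms separately. The first one vanishes: since $u$ is real and Schwartz, $(u,uu_x)=\int u^2u_x=\tfrac13\int\partial_x(u^3)=0$. Only the two high-order terms require work, and for both I would follow the same scheme — isolate the worst contribution, which is symmetric and integrates by parts, and control the remainder by a Kato--Ponce commutator estimate.

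For the $J_x^{s_1}$ term, since $J_x^{s_1}$ commutes with $\partial_x$, I would write $J_x^{s_1}(uu_x)=u\,\partial_x(J_x^{s_1}u)+[J_x^{s_1},u]u_x$. The first piece yields
$$
(J_x^{s_1}u,u\,\partial_x(J_x^{s_1}u))=\tfrac12\int u\,\partial_x\big((J_x^{s_1}u)^2\big)=-\tfrac12\int u_x\,(J_x^{s_1}u)^2,
$$
which is bounded by $\tfrac12\|u_x\|_{\infty}\|J_x^{s_1}u\|^2$. For the commutator piece, Cauchy--Schwarz together with the Kato--Ponce estimate (Lemma X4 of \cite{KP}, applied in $x$ for each fixed $y$ and then integrated in $y$, exactly as in the Banach algebra argument of the previous section) gives $\|[J_x^{s_1},u]u_x\|\leq c\,\|u_x\|_{\infty}\big(\|J_x^{s_1}u\|+\|J_x^{s_1-1}u_x\|\big)$. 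Here $\|J_x^{s_1-1}u_x\|\leq\|J_x^{s_1}u\|$ is immediate from the symbol bound $|\xi|\lesssim(1+\xi^2)^{1/2}$, and the Sobolev embedding $\|u_x\|_{\infty}\leq c\|u\|_{s_1,s_2}$ (valid since $s_1,s_2>2$, as in the proof of Theorem \ref{modtheorem}) controls the rest, so this term is $\leq c\|u\|_{s_1,s_2}^3$.

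The $J_y^{s_2}$ term is handled analogously, writing $J_y^{s_2}(uu_x)=u\,\partial_x(J_y^{s_2}u)+[J_y^{s_2},u]u_x$, where again $J_y^{s_2}$ commutes with $\partial_x$. The symmetric piece integrates by parts in $x$ to give $-\tfrac12\int u_x(J_y^{s_2}u)^2$, bounded by $\tfrac12\|u_x\|_{\infty}\|J_y^{s_2}u\|^2$, and the commutator is estimated by applying Kato--Ponce in the $y$ variable, producing the factor $\|J_y^{s_2-1}u_x\|$. This factor is the one point needing genuine care — and precisely where I expect the hypothesis $s_1\geq s_2$ to enter — since its Fourier symbol $(1+\eta^2)^{(s_2-1)/2}|\xi|$ must be dominated by $(1+\xi^2)^{s_1/2}+(1+\eta^2)^{s_2/2}$. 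This follows from Young's inequality with $p=s_1$ and $q=s_1/(s_1-1)$, exactly as in Theorem \ref{modtheorem}: because $t\mapsto t/(t-1)$ is decreasing, $s_1\geq s_2$ forces $s_1/(s_1-1)\leq s_2/(s_2-1)$, hence $(s_2-1)q\leq s_2$ and the bound closes with constant independent of the frequencies. Combining the three contributions then gives $|(u,uu_x)_{s_1,s_2}|\leq c\|u\|_{s_1,s_2}^3$, as claimed.
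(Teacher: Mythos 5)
Your proof is correct and follows essentially the same route as the paper's: the same splitting $J^{s}(uu_x)=[J^{s},u]u_x+u\,\partial_x(J^{s}u)$ in each variable, integration by parts for the symmetric pieces, the Kato--Ponce commutator estimate for the rest, and Young's inequality with $p=s_1$, $q=s_1/(s_1-1)$ (using $s_1\geq s_2$) to absorb the mixed symbol $(1+\eta^2)^{(s_2-1)/2}|\xi|$ — the paper is merely terser about this last point and about the vanishing of the $L^2$ term, which you rightly make explicit. The only slip is bibliographic: the commutator estimate you quote is Lemma X1 of \cite{KP} (Lemma X4 is the product estimate used in the Banach algebra proposition), though the inequality you wrote is the correct one.
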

\begin{proof}
Write
\begin{equation}
\begin{split}\label{2012}
(u,uu_{x})_{s_1,s_2}=&\ (J_{x}^{s_1}u,J_{x}^{s_1}(uu_{x}))+(J_{y}^{s_2}u,J_{y}^{s_2}(uu_{x}))\\
=&\ (J_{x}^{s_1}u,[J_{x}^{s_1},u]u_x)+(J_{x}^{s_1}u,uJ_{x}^{s_1}u_{x})+(J_{y}^{s_2}u,[J_{y}^{s_2},u]u_x)\\
&+(J_{y}^{s_2}u,uJ_{y}^{s_2}u_{x}).
\end{split}
\end{equation}
Fixing $y$ in  Lemma X1 of \cite{KP}, we obtain
\begin{equation*}
\|[J_{x}^{s_1},u]u_{x}\|_{L^{2}_{x}}\leq c(\|u_{x}\|_{L^{\infty}_{x}}\|J^{s_{1}-1}u_{x}\|_{L^{2}_{x}}
+\|J_{x}^{s_1}u\|_{L^{2}_{x}}\|u_{x}\|_{L^{\infty}_{x}}).
\end{equation*}
Calculating the $L^{2}$ norm  in $y$,  using  H\"older's inequality and
\eqref{embLinfty}, we obtain
\begin{equation}\label{2010}
\|[J_{x}^{s_1},u]u_{x}\|\leq c \|u\|_{s_1,s_2}^{2}.
\end{equation}
Using similar arguments and Young's inequality, we deduce that
\begin{equation}\label{2011}
\|[J_{y}^{s_2},u]u_{x}\|\leq c \|u\|_{s_1,s_2}^{2}.
\end{equation}
Note now that integrating by parts yields
\begin{equation}\label{2013}
\begin{split}
(J_{x}^{s_1}u,uJ_{x}^{s_1}u_{x})=&\ (J_{x}^{s_1}u\partial_{x}(J_{x}^{s_1}u),u)\\
=&\ \frac{1}{2}(\partial_{x}(J_{x}^{s_1}u)^{2},u)\\
=&\ -\frac{1}{2}((J_{x}^{s_1}u)^{2},\partial_{x}u)\\
\leq& \ \|u_{x}\|_{L^{\infty}}\|J_{x}^{s_1}u\|^{2}\\
\leq & \ c\|u\|_{s_1,s_2}^{3}.
\end{split}
\end{equation}
In a similar fashion,
\begin{equation}\label{2007}
(J_{y}^{s_2}u,uJ_{y}^{s_2}u_{x})\leq c\|u\|_{s_1,s_2}^{3}.
\end{equation}
From \eqref{2012}--\eqref{2007} and the  Cauchy-Schwartz inequality, we
obtain the result.
\end{proof}

\begin{remark}\label{obs111}
Once we have proved Theorem \ref{modtheorem} and Proposition \ref{modprop},
the conclusion of Theorem \ref{105} is standard. Indeed, by using Proposition
\ref{modprop} one can show that the solution, $u_\mu\in
C([0,T_\mu];H^{s_1,s_2})$, obtained in Theorem \ref{modtheorem} can be
extended, for all $\mu>0$, to an interval $[0,T]$, where $T$ depends only on
$s_1,s_2$ and $\|\phi\|_{s_1,s_2}$ but not $\mu$. Moreover, there exists a
function $\rho\in C([0,T];\R_+)$ such
\begin{equation}\label{rho}
\|u_\mu\|^2_{s_1,s_2}\leq \rho(t), \quad \rho(0)=\|\phi\|^2_{s_1,s_2}, \quad
t\in [0,T].
\end{equation}
This in turn enable us to pass the limit in \eqref{2}, as $\mu\to0$, to
obtain a solution for \eqref{bozk} in $H^{s_1,s_2}$. The interested reader
will find all the arguments in \cite{Iorio} (see also \cite{Aniura1}, where
the author deals with a two-dimensional model). The continuous dependence
upon the data can be obtained by using the Bona-Smith approximation.
\end{remark}

\section{Local well-posedness in $H^{s}(w^2)$}\label{localweighted}

This section is devoted to prove Theorem \ref{H}. We start with the following
lemma. A similar result has also appeared in \cite{Aniura1}.

\begin{lemma}\label{lema4.1}
Let $w$ be a smooth weight with all its first, second, and third derivatives
bounded. Define
$$
w_\lambda(x,y)=w(x,y)e^{-\lambda(x^2 +y^2)},\quad (x,y)\in \R^2,\;\; \lambda
\in (0,1).
$$
Then, there exist constants $c_{j}, j=1,2,3$, independents of $\lambda$, such
that
$$\|\nabla w_\lambda\|_\infty \leq c_1,$$
$$\|D^{\alpha}w_\lambda\|_\infty\leq c_2 ,$$
$$\|D^{\beta}w_\lambda\|_\infty\leq c_3,$$
where $\alpha, \beta \in \N^2$, with $|\alpha|=2$ and $|\beta|=3.$
\end{lemma}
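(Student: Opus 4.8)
The plan is to estimate the three quantities directly using the product rule, exploiting the fact that every derivative falling on the Gaussian factor produces a polynomial in $(x,y)$ that is controlled by the exponential decay, while the factors coming from $w$ are bounded by hypothesis. Write $w_\lambda = w \cdot g_\lambda$ where $g_\lambda(x,y) = e^{-\lambda(x^2+y^2)}$. The key observation is that $\lambda^{1/2}|x|e^{-\lambda x^2}$, $\lambda |x|^2 e^{-\lambda x^2}$, and more generally $\lambda^{k/2}|x|^k e^{-\lambda x^2}$ are bounded uniformly in $\lambda \in (0,1)$ by a constant depending only on $k$; this follows from the substitution $u = \lambda^{1/2} x$, which reduces each to $\sup_u |u|^k e^{-u^2} < \infty$. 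Since every factor of $\lambda$ appearing is at most $\lambda \le 1$, these Gaussian-weighted polynomials are bounded independently of $\lambda$.

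First I would compute $\nabla w_\lambda$ via the product rule: $\partial_x w_\lambda = (\partial_x w) g_\lambda - 2\lambda x\, w\, g_\lambda$, and similarly for $\partial_y$. The first term is bounded by $\|\partial_x w\|_\infty$ (using $g_\lambda \le 1$), which is finite by hypothesis. For the second term, here is where the lack of a bound on $w$ itself forces care: I cannot simply bound $|w|$. Instead I must use the fundamental theorem of calculus to control $w$ by its gradient, writing $w(x,y) = w(0,0) + \int_0^1 \nabla w(tx,ty)\cdot(x,y)\,dt$, so that $|w(x,y)| \le |w(0,0)| + c\,|(x,y)|$ with $c = \|\nabla w\|_\infty$. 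Thus $|2\lambda x\, w\, g_\lambda| \lesssim \lambda|x|(1+|(x,y)|)g_\lambda$, and each resulting term is a $\lambda$-weighted polynomial times the Gaussian, hence uniformly bounded by the observation above. This handles $\|\nabla w_\lambda\|_\infty \le c_1$.

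Next I would treat the second-order case $|\alpha|=2$ by applying the product rule once more. A representative term is $\partial_x^2 w_\lambda = (\partial_x^2 w)g_\lambda - 4\lambda x (\partial_x w) g_\lambda + (4\lambda^2 x^2 - 2\lambda)w\, g_\lambda$. The first two terms are controlled by $\|\partial_x^2 w\|_\infty$ and $\|\partial_x w\|_\infty$ respectively, while the last term again requires the linear growth bound on $w$, yielding factors like $\lambda^2|x|^2(1+|(x,y)|)$ and $\lambda(1+|(x,y)|)$, all uniformly bounded. The third-order case $|\beta|=3$ proceeds identically, with the highest-derivative term bounded by $\|D^\beta w\|_\infty$ and the lowest term again using the growth estimate on $w$; the worst Gaussian-polynomial factor arising is of the form $\lambda^3|x|^3(1+|(x,y)|)$, still uniformly bounded.

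The main obstacle is precisely the absence of an $L^\infty$ bound on $w$ itself, which prevents naively discarding the $w$ factor whenever all derivatives land on $g_\lambda$. The resolution, as indicated, is to invoke the at-most-linear growth of $w$ guaranteed by the boundedness of $\nabla w$; one extra power of $|(x,y)|$ is harmless since the Gaussian-polynomial terms $\lambda^{k/2}|(x,y)|^k g_\lambda$ remain uniformly bounded for every $k$. Once this is in place, the entire proof is a bookkeeping exercise in the product rule together with the uniform-in-$\lambda$ boundedness of $\sup |u|^k e^{-u^2}$.
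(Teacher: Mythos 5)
Your proof is correct and follows essentially the same route as the paper's: both arguments combine the product rule with the linear-growth bound $|w(x,y)|\leq |w(0,0)|+r\|\nabla w\|_\infty$ (obtained from the mean-value theorem, or equivalently your FTC identity) and the uniform scaling estimate for Gaussian-weighted polynomials, which in the paper appears in the equivalent form $r^a e^{-\lambda r^2}\leq c_a\lambda^{-a/2}$. Nothing further is needed.
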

\begin{proof}
Let $r=\sqrt{x^2+y^2}$. From the mean-value theorem,
\begin{equation}\label{lambda}
|w(x,y)-w(0,0)|\leq r \|\nabla w\|_\infty.
\end{equation}
Thence, $|w(x,y)|\leq r \|\nabla w\|_\infty+|w(0,0)|.$  Because
$\partial_{x}w_\lambda =(w_x -2\lambda x w)e^{-\lambda r^2}$, we get
$$
|\partial_x w_\lambda|\leq \|w_x\|_\infty+\|\nabla w\|_\infty +|w(0,0)|,
$$
where  we used the inequality $r^a e^{-\lambda r^2}\leq c_a \lambda^{-a/2}$,
which is valid for all $ \lambda ,a >0 $. Also, since
$$
\partial_{x}^2 w_\lambda =(w_{xx}-4\lambda x w_x -2\lambda w +4\lambda^2 x^2 w)e^{-\lambda r^2},
$$
we deduce
$$
|\partial_{x}^2 w_\lambda|\leq c(\|w_{xx}\|_\infty +\|\nabla w\|_{\infty} +|w(0,0)|).
$$
The computations for the second-order mixed derivatives are similar. Finally,
we have
$$
\partial_{x}^{3}w_\lambda = (w_{xxx}-6\lambda x w_{xx}-6\lambda w_x
 + 12\lambda^2 x^2 w_x +12\lambda^2 x w -8\lambda^3 x^3 w)e^{-\lambda r^2}.$$
As above, there exists $c_3$, independent of $\lambda,$ such that
$\|\partial_{x}^{3}w_\lambda\|_\infty \leq c_3$. For the third-order mixed
derivatives the argument is analogue. The proof of the lemma is thus
completed.
\end{proof}

Let $w$ satisfy the hypotheses of  Lemma \ref{lema4.1}. For any $\lambda \in
(0,1)$, inequality \eqref{lambda} implies that there exists $c_{\lambda}>0$,
depending on $\lambda$, such that
$$
|w(x,y)|\leq c_{\lambda} e^{\lambda(x^2+y^2)}, \forall x,y\in \R.
$$

Now we are able to prove Theorem \ref{H}.

\begin{proof}[Proof of Theorem \ref{H}]

\underline{Existence and uniqueness:} Assume that $\phi\in H^s(w^2)$, $s>2$.
In view of Theorem \ref{120} and Remark \ref{obs111} there exists $T>0$, such
that, for all $\mu\geq0$, the unique solutions (in $H^s$) of \eqref{bozk} and
\eqref{2} are defined in the interval $[0,T]$ and satisfy
\begin{equation}\label{rhoest}
\|u_\mu(t)\|^2_{H^s}\leq \rho(t), \qquad t\in [0,T].
\end{equation}
Here, it is understood that $u_0:=u$ and $u_\mu$  are the solutions of
\eqref{bozk} and \eqref{2}, respectively. Let $M:=\sup_{t\in
[0,T]}\|u_\mu(t)\|_{H^s}$. From \eqref{rhoest}, it may be assumed that $M$
does not depend on $\mu\geq0$.\\

\noindent \underline{Persistence:} To simplify the notation, in what follows
we write, for $\mu>0$, $u_\mu=v$. Let  $w_{\lambda}$ be as in  Lemma
\ref{lema4.1}. Using Remark \ref{bootstrapping}, multiplying the equation
\eqref{2} by $w_{\lambda}^{2}v$ and integrating on $\R^2$, we obtain
\begin{equation}\label{muestL2}
\frac{1}{2}\frac{d}{dt}\|w_{\lambda}v\|^2 = (w_\lambda
v,-w_{\lambda}\mathcal{H}\partial_{x}^2 v- w_{\lambda}v_{xyy} -w_{\lambda} v
v_x + \mu w_{\lambda}\Delta v).
\end{equation}
Let us estimate the right-hand side of \eqref{muestL2}. Since $(w_\lambda v,
\mathcal{H}\partial_{x}^2 (w_\lambda v))=0$, we can write
\begin{equation}\label{teow2a}
\begin{split}
(w_\lambda v,w_\lambda \mathcal{H}\partial_{x}^2 v)=\ (w_{\lambda}v,
[w_{\lambda},\mathcal{H}]\partial_{x}^2 v)
+(w_{\lambda}v,\mathcal{H}[w_\lambda, \partial_{x}^2]v).
\end{split}
\end{equation}
By using Theorem \ref{Comu}, H\"older's inequality, and Lema \ref{lema4.1},
we obtain
$$
\|[w_\lambda,\mathcal{H}]\partial_{x}^2 v\|\leq c \|\partial_{x}^2
w_{\lambda}\|_{\infty}\|v\|\leq cM.
$$
Also, since  $\mathcal{H}$ is an isometry on $L^{2}(\R)$, Lemma \ref{lema4.1}
implies
$$
\|\mathcal{H}[w_\lambda, \partial_{x}^2]v\|=\|[w_\lambda,\partial_{x}^2]v\|=
\|\partial_{x}^2 w_{\lambda}v+2\partial_{x}w_\lambda \partial_{x}v\|\leq cM.
$$
Thus, from \eqref{teow2a},
\begin{equation}\label{teow2b}
\begin{split}
(w_\lambda v,w_\lambda \mathcal{H}\partial_{x}^2 v)\leq cM\|w_{\lambda}v\|.
\end{split}
\end{equation}
Next, we note that
\begin{equation}\label{teow2c}
\begin{split}
(w_\lambda v,w_\lambda v_{xyy})=&\ (w_\lambda v,[w_\lambda , \partial_{xyy}^3]v)+(w_\lambda v,\partial_{xyy}^3(w_\lambda v))\\
\leq & \ cM\|w_{\lambda}v\|,
\end{split}
\end{equation}
where we used that $(w_\lambda v, \partial_{xyy}^3 (w_\lambda v))=0$ and
Lemma \ref{lema4.1} to get
$$
\|[w_\lambda,\partial_{xyy}^3]v\|=\|\partial_{xyy}^3 w_\lambda v+2\partial_{xy}^2 w_\lambda \partial_y
v+ \partial_{x}w_\lambda \partial_{y}^2 v + \partial_{y}^2 w_{\lambda}
\partial_{x}v +2\partial_y w_\lambda \partial_{yx}^2 v\|\leq cM.
$$
Integrating by parts, we see that
\begin{equation}\label{teow2d}
\begin{split}
 (w_\lambda v,w_\lambda \Delta v)=&(w_\lambda v ,[w_\lambda,\Delta]v)+(w_\lambda v,\Delta (w_\lambda v))\\
=&(w_\lambda v ,[w_\lambda,\Delta]v)-\|\nabla(w_\lambda v)\|^{2}\\
\leq& \  |(w_\lambda v ,[w_\lambda,\Delta]v)|\\
\leq&  \ cM\|w_{\lambda}v\|,
\end{split}
\end{equation}
where we have used  Lemma \ref{lema4.1} to obtain
$$
\|[w_\lambda, \Delta]v\|=\|(\Delta w_\lambda)v-2\nabla w_\lambda \cdot \nabla
v\|\leq \|\Delta w_\lambda\|_\infty \|v\|+2\|\nabla w_\lambda\|_\infty
\|\nabla v\|\leq cM.
$$
Finally, we have
\begin{equation}\label{teow2e}
|(w_\lambda v,w_\lambda v v_x)|\leq \|v_x\|_\infty \|w_\lambda v\|^2 \leq M
\|w_\lambda v\|^2.
\end{equation}
Therefore, gathering together \eqref{teow2a}-\eqref{teow2e},
$$
\frac{d}{dt}\|w_\lambda v(t)\|^2 \leq c^2 M^2 +(\mu^2 +1 +cM)\|w_\lambda v(t)\|^2.
$$
By Gronwall's lemma (see e.g. \cite[page 369]{Hille}), we then deduce that
\begin{equation}\label{teow2f}
\|w_\lambda v(t)\|^2\leq \|w_\lambda \phi\|^2+tc^2M^2 + \int_{0}^{t}g_\lambda
(s)ds \quad t\in [0,T],
\end{equation}
where $g_{\lambda}(s)=(\|w_\lambda \phi\|^2+sc^2M^2)(\mu^2 +1 +cM)\exp
[s(\mu^2 +1 +cM)].$ Note that the constant $c$ in \eqref{teow2f} is
independent of $\lambda$. Thus, taking the limit, as $\lambda \to 0$, using
the monotone convergence theorem and inequality \eqref{rho}, we obtain
\begin{equation}\label{60}
\begin{split}
\|w v(t)\|^2&\leq \|w \phi\|^2+tc^2M^2 + \int_{0}^{t}g_{0}(s)ds\\
            &\leq \|w \phi\|^2+tc^2\rho(t) + \int_{0}^{t}g_{0}(s)ds\\
            &=\|w \phi\|^2+G(t,\mu)^2,\quad t\in [0,T],
\end{split}
\end{equation}
where $G$ is a continuous function on both parameters. Therefore, \eqref{60}
shows the persistence of the solution $u_\mu$, for all $\mu>0.$

Fixed $\lambda\in(0,1)$, using inequality \eqref{rhoest}, equation \eqref{2},
and Gronwall's lemma it is not difficult to prove that $\{u_{\mu}\}_{\mu> 0}$
is a Cauchy net in $L^{2}_{w_{\lambda}}=L^{2}(w_{\lambda}^{2}dxdy)$ and
$u_{\mu}\to u$ in $L^{2}_{w_{\lambda}}$, as $\mu\downarrow 0$. Therefore, if
$\varphi\in L^{2}_{w_{\lambda}}$, from \eqref{60}, we have
$$|(u,\varphi)_{L^{2}_{w_{\lambda}}}|=\lim_{\mu\to 0}|(u_{\mu},\varphi)_{L^{2}_{w_{\lambda}}}|
\leq \|\varphi\|_{L^{2}_{w_{\lambda}}}\lim_{\mu\to 0}(\|w_{\lambda}
\phi\|+G(t,\mu)).
$$
So, taking the supremum over all function $\varphi$ with
$\|\varphi\|_{L^{2}_{w_\lambda}}=1,$ in the above inequality, we find that
 \begin{eqnarray}\label{F}
\|w_{\lambda} u(t)\|\leq \|w_{\lambda} \phi\|+G(t,0),\quad t\in [0,T].
\end{eqnarray}
Next, taking the limit in \eqref{F}, as $\lambda\downarrow 0,$ and using the
monotone convergence theorem, we obtain
\begin{eqnarray}\label{D}
\|w u(t)\|\leq \|w \phi\|+G(t,0),\ \quad t\in [0,T],
\end{eqnarray}
where $G(t,0)\to 0,$ as $t\to 0.$ Inequality \eqref{D} then give us the
persistence of the solution $u$ in $L^{2}_{w}=L^{2}(w^2 dxdy).$\\

\noindent  \underline{Continuity:} We first state that $u:[0,T]\to L_{w}^2$
is weakly continuous. Indeed, for any $\varphi \in L^{2}_{w}$, define
$\varphi_{\lambda}=\varphi e^{-\lambda(x^2+y^2)}$. By the monotone
convergence theorem we have that $\varphi_{\lambda} \to \varphi$ in
$L^{2}_{w}$, as $\lambda \downarrow 0.$ Let $\epsilon>0$ be given and choose
$\lambda_{0}>0$ such that
$$
\|\varphi-\varphi_{\lambda_{0}}\|_{L^{2}_{w}}<\frac{\epsilon}{4(\|\phi\|_{L^{2}_{w}}+G(T,0))}.
$$
Fixed $t\in [0,T],$ let $\delta>0$ such that
$$
|t-s|<\delta\Rightarrow
\|u(t)-u(s)\|<\frac{\epsilon}{2\|\varphi_{\lambda_{0}}\|_{L^{2}({w^4})}}.
$$
This is possible thanks to the $H^s$ theory and the inequality
\begin{equation*}
\begin{split}
\|\varphi_{\lambda_{0}}\|_{L^{2}(w^4)}^{2}=&\ \int w^{4}|\varphi(x,y)|^{2}e^{-2\lambda_{0}(x^{2}+y^{2})}\\
\leq &\ \sup_{(x,y)\in \R^2}\big\{w^2 e^{-2\lambda_{0}(x^{2}+y^{2})}\big\}\int w^{2}|\varphi(x,y)|^{2}\\
\leq& \sup_{(x,y)\in \R^2}\big\{((x^2+y^2)\|\nabla u\|_{\infty}^2+|w(0,0)|^2) e^{-2\lambda_{0}(x^{2}+y^{2})}\big\}\int w^{2}|\varphi(x,y)|^{2}\\
\leq &\ c(w,\lambda_{0})\|\varphi\|_{L^{2}_w}<\infty.
\end{split}
\end{equation*}
Hence,  if $|t-s|<\delta$ from  \eqref{D}, we have
\begin{equation*}
\begin{split}
|(\varphi,u(t)-u(s))_{L^{2}_w}|\leq & \ |(\varphi-\varphi_{\lambda_{0}}, u(t)-u(s))_{L^{2}_w}|+|(\varphi_{\lambda_{0}},u(t)-u(s))_{L^{2}_w}|\\
\leq &  \ \|\varphi-\varphi_{\lambda_{0}}\|_{L^{2}_w}(\|u(t)\|_{L^{2}_w}+\|u(s)\|_{L^{2}_w})\\
&+|(w^2 \varphi_{\lambda_0},u(t)-u(s))|\\
\leq& \ 2\|\varphi-\varphi_{\lambda_{0}}\|_{L^{2}_w}(\|\phi\|_{L^{2}_w}+G(T,0))+\|\varphi_{\lambda_{0}}\|_{L^{2}(w^4)}\|u(t)-u(s)\|\\
<& \ \frac{\epsilon}{2}+\frac{\epsilon}{2}=\epsilon.
\end{split}
\end{equation*}
This proves our statement.

Now observe that
\begin{equation}\label{teow2g}
\begin{split}
\|u(t)-\phi\|_{L^{2}_w}^2=&\ \|u(t)\|^{2}_{L^{2}_w}+\|\phi\|^{2}_{L^{2}_w}-(\phi,u(t))_{L^{2}_w}-(\phi,u(t))_{L^{2}_w}\\
            \leq&\ G(t,0)+\|\phi\|^{2}_{L^{2}_w}+\|\phi\|^{2}_{L^{2}_w}-(\phi,u(t))_{L^{2}_w}-(\phi,u(t))_{L^{2}_w}.\\
\end{split}
\end{equation}
The weak continuity of $u$ in $L^{2}_w$ and the fact that $G(t,0)\to 0$, as
$t\to0$, then yield the   right continuity of $u$ at $t=0.$ To finish the
argument, we fixe $\tau \in (0,T)$ and use the invariance of the solution by
the translation
$$
(t,x,y)\in [0,T-\tau]\times \R^2\mapsto (t+\tau,x,y),
$$
to obtain that $u$ is  right  continuous in $[0,T)$. The left continuity at
$t=T$ is attained  in view of the change of variables
$$
(t,x,y)\in [0,T]\times \R^2\mapsto (T-t,x,y).
$$
Finally, using  the transformation
$$
(t,x,y)\mapsto (\tau-t,-x,-y),
$$
we conclude the left continuity. Thus $u$ is continuous in the interval
$[0,T]$.\\

\noindent \underline{Continuous dependence:} Let $u$ and $v$ be solutions of
\eqref{bozk} defined on the same interval $[0,T]$ such that
$u(x,y,0)=\phi(x,y)$ and $v(x,y,0)=\psi(x,y)$, with $\phi, \psi \in
H^{s}(w^2),$ $s>2$. Let  $u_\mu$ and $v_\mu$ be solutions of \eqref{2} with
$u_\mu(x,y,0)=\phi(x,y),$ $v_\mu(x,y,0)=\psi(x,y)$.  By denoting
$z=u_\mu-v_\mu,$ we see that
$$z_t +\mathcal{H}\partial_{x}^2 z+z_{xyy}+zu_x+v_\mu z_x=\mu \Delta z.$$
Multiplying the above equation by $w_{\lambda}^2 z$ and integrating on
$\R^2$, we get
\begin{equation}\label{continuousdep}
\frac{1}{2}\frac{d}{dt}\|wz\|^2+(w_\lambda z,
w_\lambda\mathcal{H}\partial_{x}^2 z+w_\lambda z_{xyy} +w_\lambda
zu_x+w_\lambda v_\mu z_x -w_\lambda\mu \Delta z)=0.
\end{equation}

Let
$\tilde{M}=\sup_{[0,T]}\{\|u_\mu(t)\|_{H^{s}(w^2)}+\|v_\mu(t)\|_{H^{s}(w^2)}\},$
then by \eqref{rhoest} and \eqref{D} $\tilde{M}$ is bounded by a constant
that is independent of $\mu$ and $\tilde{M}=O(\|\phi\|_{H^{s}(w^2)})$, as
$\phi \to \psi$, in $H^{s}(w^2)$.

By using similar computations as above, we face the inequality
$$
\frac{d}{dt}\|w_\lambda z\|^2 \leq k_1 \|w_\lambda z\|^2 +k_2
\|z\|_{L^{\infty}_{T}L^2}^2, \qquad 0\leq t\leq T,
$$
where $k_1$ and $k_2$ are constants depending only on $\tilde{M}.$ Then, by
the Gronwall lemma
\begin{equation}\label{dep5}
\|w_\lambda z\|^2 \leq (\|w_\lambda (\phi -\psi)\|+k_2 T \|z\|_{L^{\infty}_{T}L^2}^2)e^{k_1 T}.
\end{equation}
Taking the limit in \eqref{dep5}, as $\mu \downarrow 0$, we obtain
\begin{equation}\label{dep6}
\|w_\lambda (u-v)\|^2 \leq (\|w_\lambda (\phi -\psi)\|+k_2 T \|u-v\|_{L^{\infty}_{T}L^2}^2)e^{k_1 T}.
\end{equation}
Finally, as $\lambda \downarrow 0$,  \eqref{dep6} implies
\begin{equation}\label{dep7}
\|w(u-v)\|^2 \leq (\|w(\phi -\psi)\|+k_2 T \|u-v\|_{L^{\infty}_{T}L^2}^2)e^{k_1 T}.
\end{equation}
From \eqref{dep7} and the continuous dependence in $H^s(\R^2)$, we see that
$u\to v$ in $H^s(w^2)$ as $\phi \to \psi$ in $H^s(w^2).$ The proof of Theorem
\ref{H} is thus completed.
\end{proof}

\begin{remark}\label{remh2teo}
It is easily seen that the weight $w(x,y)=(1+x^2+y^2)^{\gamma/2}, \ \gamma\in
[0,1],$ satisfies the hypothesis of Theorem \ref{H}. Additional information
concerning the spaces $H^{s}(w^{2})$ can be found in \cite{Aniura1}.
\end{remark}

\section{Local  well-posedness in  $\mathcal{Z}_{s,r}$} \label{localweighted1}

In this section, we prove Theorem \ref{B1}. So, let us assume that
$\phi\in\mathcal{Z}_{s,r}$. First of all, we note that the existence of a
local solution, say $u:[0,T]\to H^s$, is obtained from Theorem \ref{120}.
Thus, we need to handle only with the space $L^2_r$. Moreover, once we obtain
the persistence property in $L^2_r$, the continuity of $u:[0,T]\to L^2_r$ and
the continuity of the map data-solution follow as in Theorem \ref{H}.\\

\noindent {\bf Part i).} The first part, that is,   the case $s>2$ and $r\in
[0,1]$ was proved in Theorem \ref{H} (see also Remark \ref{remh2teo}).
Therefore, it remains to consider $r\in(1,5/2)$.  We divide this part into
two cases.\\

\noindent {\bf Case a):} $r\in (1,2]$ and  $s\geq 2r$.  Write $r=1+\theta,$
with $\theta \in (0,1]$. Define
$$
M_{1}:=\sup_{[0,T]}(\|u\|_{H^s}+\|\langle x,y\rangle^{\theta}u\|).
$$
Since $\theta\in(0,1]$, the second term in $M_1$  is finite by the first part
of the theorem.

Let $w_N$ be as in  \eqref{defWN}. We multiply  the differential equation
\eqref{bozk} by $w_{N}^{2+2\theta}u$ and integrate on $\R^{2}$ to obtain
\begin{equation}\label{106}
\frac{1}{2}\frac{d}{dt}\|w_{N}^{1+\theta}u\|^{2}+
(w_{N}^{1+\theta}u,w_{N}^{1+\theta}\mathcal{H}\partial_{x}^{2}u+\beta
w_{N}^{1+\theta}u_{xyy}+w_{N}^{1+\theta}uu_{x})=0.
\end{equation}
Following the ideas in \cite{GermanPonce}, we can write
\begin{equation*}
\begin{split}
w_{N}^{1+\theta}\mathcal{H}\partial_{x}^{2}u=&\ [w_{N}^{1+\theta};\mathcal{H}]\partial_{x}^{2}u+\mathcal{H}(w_{N}^{1+\theta}\partial_{x}^{2}u)\\
=&\ A_1+\mathcal{H}\partial_{x}^{2}(w_{N}^{1+\theta}u)-
2\mathcal{H}(\partial_{x}w_{N}^{1+\theta}\partial_{x}u)-\mathcal{H}\partial_{x}^{2}w_{N}^{1+\theta}u\\
=&\ A_1 +A_2 +A_3 +A_4.
\end{split}
\end{equation*}
In view of Theorem \ref{Comu}, we have
\begin{equation}\label{A1estimate}
\begin{split}
\|A_1\|&=\|\|[w_{N}^{1+\theta};\mathcal{H}]\partial_{x}^{2}u\|_{L^{2}_{x}}\|_{L^{2}_{y}}\\
&\leq
c\|\|\partial_{x}^{2}w_{N}^{1+\theta}\|_{L^{\infty}_{x}}\|u\|_{L^{2}_{x}}\|_{L^{2}_{y}}\leq
\;c\|\partial_{x}^{2}w_{N}^{1+\theta}\|_{L^{\infty}_{xy}}\|u\| \leq  \;
cM_{1}.
\end{split}
\end{equation}
An application of Lemma \ref{inter}, with $a=1+2\theta, \
\alpha=\frac{1}{1+2\theta}$, and $b=1/2+\theta$ yields
\begin{equation}\label{A3estimate}
\|A_3\|\leq 2(1+\theta)\Big(\|\partial_{x}(w_{N}^{\theta}u)\|+\|u\|\Big)\leq
c(\|J^1(w_{N}^{\theta}u)\|+\|u\|)\leq c(\|w_{N}^{1+\theta}u\|+M_1),
\end{equation}
and, similarly,
\begin{equation}\label{A4estimate}
\|A_4\|\leq cM_{1}.
\end{equation}
Furthermore, inserting $A_2$ in \eqref{106} we find that its contribution is
null.  The constant $c$ that appears here and in the rest of the proof of the
theorem will always be independent of $N$. From  Lemma \ref{inter}, with
$a=2+2\theta, \ \alpha=\frac{1}{2+2\theta}$, and \ $b=1+\theta$, we obtain
\begin{equation}\label{teoZa}
\|J^1(w_{N}^{1/2+\theta}u)\|\leq c(\|w_{N}^{1+\theta}u\|+\|J^{2+2\theta}u\|+M_1).
\end{equation}
Another application of Lemma \ref{inter}, with $a=2+2\theta, \
\alpha=\frac{2}{2+2\theta}$, and $b=1+\theta$ implies
\begin{equation}\label{teoZb}
\|J^{2}(w_{N}^{\theta}u)\|\leq c(\|w_{N}^{1+\theta}u\|+\|J^{2+2\theta}u\|+M_1).
\end{equation}

Using integration by parts, the  inequality
$|\partial_{x}w_{N}^{2+2\theta}|\leq cw_{N}^{1+2\theta}$, \eqref{teoZa} and
\eqref{teoZb}, we obtain
\begin{equation}\label{teoZc}
\begin{split}
\int w_{N}^{2+2\theta}u\partial_{x}\partial_{y}^{2}u=&\ \frac{1}{2}\int(-2\partial_{y}w_{N}^{2+2\theta}u\partial_{x}\partial_{y}u+
\partial_{x}w_{N}^{2+2\theta}(\partial_{y}u)^2 )\\
\leq& \
\|w_{N}^{1+\theta}u\|\|w_{N}^{\theta}\partial_{x}\partial_{y}u\|+\|w_{N}^{1/2+\theta}\partial_{y}u\|^2\\
\leq & \ c(\|J^{2}(w_{N}^{\theta}u)\|^2 +\|J(w_{N}^{1/2+\theta}u)\|^2
+\|w_{N}^{1+\theta}u\|^2+M_{1}^{2})\\ \leq &\
c(\|w_{N}^{1+\theta}u\|^2+M_{1}^2).
\end{split}
\end{equation}

Finally, since $s>2$, Sobolev's embedding gives
\begin{equation}\label{teoZd}
(w_{N}^{1+\theta}u,w_{N}^{1+\theta}uu_{x})\leq M_{1}\|w_{N}^{1+\theta}u\|^2.
\end{equation}

From \eqref{106}, H\"older's and the  above inequalities,  we find that
$$
\frac{d}{dt}\|w_{N}^{1+\theta}u\|^{2}\leq c(1+\|w_{N}^{1+\theta}u\|^{2}).
$$

So, by the Gronwall lemma, we get
$$
\|w_{N}^{1+\theta}u\|^{2}\leq \|w_{N}^{1+\theta}\phi\|^{2}+tc+c\int_{0}^{t}e^{ct'}(\|w_{N}^{1+\theta}\phi\|^{2}+t'c)dt'.
$$

The monotone convergence theorem then yields
\begin{equation}\label{107}
\|\langle x,y \rangle^{1+\theta}u\|^{2}\leq \|\langle x,y
\rangle^{1+\theta}\phi\|^{2}+g(t),
\end{equation}
where $g(t)\to 0,$  as  $t \downarrow 0.$ This  proves the persistence
property in $L^2_r$. As we already pointed out, the continuity follows as in
Theorem \ref{H}.\\

\noindent {\bf Case b):} $r\in (2,5/2)$ and $s\geq2r$. Let $r=2+\theta$, with
$\theta \in (0,1/2)$. Define
$$
M_{2}=\displaystyle\sup_{[0,T]}\{\|\langle x,y\rangle^{2}u\|+\|u\|_{H^{s}}\}.
$$
We now multiply the differential equation  \eqref{bozk} by
$x^{2}w_{N}^{2+2\theta}u$ and integrate on $\R^2$ to obtain
\begin{equation}\label{300}
\begin{split}
\frac{1}{2}\frac{d}{dt}&\|xw_{N}^{1+\theta}u\|^{2}\\
&\leq |(xw_{N}^{1+\theta}u, xw_{N}^{1+\theta}\mathcal{H}\partial_{x}^{2}u)+
(xw_{N}^{1+\theta}u, x
w_{N}^{1+\theta}u_{xyy})+(xw_{N}^{1+\theta}u,xw_{N}^{1+\theta}uu_{x})|.
\end{split}
\end{equation}
Let us control the first term on the right-hand side of \eqref{300}. Since
$\partial_{x}^{2}(xu)=2\partial_{x}u+x\partial_{x}^{2}u$ and
$\mathcal{H}(x\partial_xu)=x\mathcal{H}(\partial_xu)$,  we  can write
$$
x\mathcal{H}\partial_{x}^{2}u=\mathcal{H}\partial_{x}^{2}(xu)-2\mathcal{H}\partial_{x}u=B_{1}+B_{2}.
$$
By definition of $w_N$, we deduce the inequality
\begin{equation}\label{wNjap}
w_N^{1+\theta}(x,y)\leq \langle x,y\rangle^{1+\theta}\leq (1+|x|+|y|)\langle
x,y\rangle^{\theta}.
\end{equation}
Using \eqref{wNjap},  Theorems \ref{boundhilbert} and  \ref{indc}, Remark
\ref{condap},  and  the identity $\widehat{\partial_{x}u}(0,\eta,t)=0$ we get
\begin{equation*}
\begin{split}
\|w_{N}^{1+\theta}B_{2}\|\leq & \ c \|w_{N}^{1+\theta}\mathcal{H}\partial_{x}u\|\\ \nonumber
\leq& \ c \|w_{N}^{\theta}\mathcal{H}\partial_{x}u\|+\|xw_{N}^{\theta}\mathcal{H}\partial_{x}u\|+\|yw_{N}^{\theta}\mathcal{H}\partial_{x}u\|\\
\leq & \ c\|\langle x,y\rangle^{\theta}\mathcal{H}\partial_{x}u\|+\|x\langle x,y\rangle^{\theta}\mathcal{H}\partial_{x}u\|+\|y\langle x,y\rangle^{\theta}\mathcal{H}\partial_{x}u\|\\
\leq & \ c\|\langle x,y\rangle^{\theta}\mathcal{H}\partial_{x}u\|+\|\langle x,y\rangle^{\theta}\mathcal{H}(x\partial_{x}u)\|+\|\langle x,y\rangle^{\theta}\mathcal{H}(y\partial_{x}u)\|\\
\leq & \ c(\|\mathcal{H}\partial_{x}u\|+\||x|^{\theta}\mathcal{H}\partial_{x}u\|+\||y|^{\theta}\mathcal{H}\partial_{x}u\|+
\|\mathcal{H}(x\partial_{x}u)\|+\||x|^{\theta}\mathcal{H}(x\partial_{x}u)\|\\
&+\||y|^{\theta}\mathcal{H}(x\partial_{x}u)\|+\|\mathcal{H}(y\partial_{x}u)\|+\||x|^{\theta}\mathcal{H}(y\partial_{x}u)\|+
\||y|^{\theta}\mathcal{H}(y\partial_{x}u)\|\\
\leq& \ c(\|\partial_{x}u\|+c^{*}\||x|^{\theta}\partial_{x}u\|+\||y|^{\theta}\partial_{x}u\|+\|x\partial_{x}u\|+c^{*}\||x|^{\theta}x\partial_{x}u\|\\
&+\||y|^{\theta}x\partial_{x}u\|+\|y\partial_{x}u\|+c^*\||x|^{\theta}y\partial_{x}u\|+\||y|^{\theta}y\partial_{x}u\|)\\
\leq& \ c\|\langle x,y\rangle^{1+\theta}\partial_{x}u\|)\\
=&\ C.
\end{split}
\end{equation*}
From Lemma \ref{inter}, it follows that
\begin{eqnarray*}
C\leq c(\|J(\langle x,y \rangle^{1+\theta}u)\|+M_2)\leq c(\|\langle x,y
\rangle^{3/2+\theta}u\|+\|J^{3+2\theta}u\|+M_2)\leq cM_2.
\end{eqnarray*}

To estimate the term with $B_1$, note that
\begin{equation*}\label{101}
\begin{split}
w_{N}^{1+\theta}\mathcal{H}\partial_{x}^{2}(xu)=&\
[w_{N}^{1+\theta},\mathcal{H}]\partial_{x}^{2}(xu)+\mathcal{H}(w_{N}^{1+\theta}\partial_{x}^{2}(xu))\\
=& \ D_1 +\mathcal{H}(\partial_{x}^{2}(w_{N}^{1+\theta}xu))-2\mathcal{H}(\partial_{x}w_{N}^{1+\theta}\partial_{x}(xu))-\mathcal{H}(\partial_{x}^{2}w_{N}^{1+\theta}xu)\\
=& \ D_1 +D_2+D_3+D_4.
\end{split}
\end{equation*}
Inserting $D_2$ in \eqref{300} one has that its contribution is null. Furthermore, using similar arguments as above,
\begin{equation*}
\|D_1\|\leq cM_2, \qquad  \ \|D_4\|\leq cM_2.
\end{equation*}
To control $D_3$, we use that $|\partial_xw_N|\leq1$, to obtain
\begin{equation*}
\|D_3\|=2\|\mathcal{H}(\partial_{x}w_{N}^{1+\theta}\partial_{x}(xu))\|\leq c(\|w_{N}^{\theta}u\|+\|xw_{N}^{\theta}\partial_{x}u\|)\leq cM_2.
\end{equation*}

Next, we will control the middle term on the right-hand side of \eqref{300}.
The estimates  $|x\partial_{x}w_{N}|\leq 3w_{N}$ and
$|x\partial_{y}^{2}w_{N}|\leq 1$  give the inequalities
\begin{equation}\label{137}
|\partial_{x}(x^{2}w_{N}^{2+2\theta})|\leq c|xw_{N}^{2+2\theta}|, \ \quad\  |\partial_{y}^{2}(x^{2}w_{N}^{2+2\theta})|\leq c|xw_{N}^{2+2\theta}|.
\end{equation}
Also,  Lemma \ref{inter} leads to
\begin{equation}\label{desigualdade}
\|J^{2}(w_{N}^{1+\theta}u)\|\leq c(\|w_{N}^{2+\theta}u\|+\|J^{4+2\theta}u\|).
\end{equation}
Using integrating by parts, \eqref{137} and \eqref{desigualdade} (to estimate the term with second-order derivatives), we obtain
\begin{equation*}
\begin{split}
\int x^{2} w_{N}^{2+2\theta}u\partial_{x}\partial_{y}^{2}u=&\ -\frac{1}{2}\int(\partial_{y}^{2}(x^{2}w_{N}^{2+2\theta})u\partial_{x}u+\partial_{x}(x^{2}w_{N}^{2+2\theta})\partial_{y}^{2}u u\\
&+2\partial_{y}(x^{2}w_{N}^{2+2\theta})\partial_{x}\partial_{y}u u) \\
\leq& \ \|w_{N}^{1+\theta}\partial_{x}u\|\|xw_{N}^{1+\theta}u\|+\|xw_{N}^{1+\theta}u\|^{2}+\|w_{N}^{1+\theta}\partial_{y}^{2}u\|\|xw_{N}^{1+\theta}u\|+\\
&+\|w_{N}^{1+\theta}\partial_{x}\partial_{y}u\|\|xw_{N}^{1+\theta}u\|\\
\leq & \ c(M_{2}^{2}+\|xw_{N}^{1+\theta}u\|^2 +\|w_{N}^{1+\theta}\partial_{y}^{2}u\|^2+\|w_{N}^{1+\theta}\partial_{x}\partial_{y}u\|^2)\\
\leq & \ c(M_{2}^{2}+\|xw_{N}^{1+\theta}u\|^2 +\|w_{N}^{2+\theta}u\|^2+\|J^{4+2\theta}u\|^2)\\
\leq& \ c(M_{2}^{2}+\|xw_{N}^{1+\theta}u\|^2+\|yw_{N}^{1+\theta}u\|^2).
\end{split}
\end{equation*}

\noindent Finally, the last term on the right-hand side of \eqref{300} is controlled as
\begin{equation*}
(xw_{N}^{1+\theta}u,xw_{N}^{1+\theta}uu_{x})\leq M_{2}\|xw_{N}^{1+\theta}u\|^{2}.
\end{equation*}

\noindent The H\"older inequality applied to \eqref{300},  together with the
above estimates yield
\begin{equation}\label{8}
\frac{d}{dt}\|xw_{N}^{1+\theta}u\|^{2}\leq c(M_{2}^{2}+\|xw_{N}^{1+\theta}u\|^{2}+\|yw_{N}^{1+\theta}u\|^{2}).
\end{equation}
A similar computation with $y$ instead of $x$ gives
\begin{equation}\label{140}
\frac{d}{dt}\|yw_{N}^{1+\theta}u\|^{2}\leq c(M_{2}^{2}+\|xw_{N}^{1+\theta}u\|^{2}+\|yw_{N}^{1+\theta}u\|^{2}).
\end{equation}

From \eqref{8}, \eqref{140}, Gronwall's inequality,  and the monotone
convergence theorem we are able to establish the persistence property. This
proves Case b).\\

\noindent {\bf Part ii).} We also split this case in two other ones.

\noindent {\bf Case a):} $r\in [5/2,3)$ and $s\geq2r$.  Let $r=2+\theta,$
with $\theta\in [1/2,1),$ $s\geq 2r$. Let
$$
M_3=\sup_{[0,T]}\{\|\langle x,y \rangle^{3/2+\theta}u\|+\|u\|_{H^{s}}\}.
$$
We multiply the differential equation \eqref{bozk} by $x^{4}w_{N}^{2\theta}u$
and integrate over $\R^2$ to obtain
\begin{equation}\label{600}
\frac{1}{2}\frac{d}{dt}\|x^{2}w_{N}^{\theta}u\|^{2}+(x^{2}w_{N}^{\theta}u,x^{2}w_{N}^{\theta}\mathcal{H}\partial_{x}^{2}u+
\beta x^{2}w_{N}^{\theta}u_{xyy}+x^{2}w_{N}^{\theta}uu_{x})=0.
\end{equation}
From the equality
$$
x^{2}\mathcal{H}\partial_{x}^{2}u=\mathcal{H}\partial_{x}^{2}(x^{2}u)-4\mathcal{H}\partial_{x}(xu)+2\mathcal{H}u,
$$
we have
\begin{equation*}
\begin{split}
w_{N}^{\theta}x^{2}\mathcal{H}\partial_{x}^{2}u=&\
 w_{N}^{\theta}\mathcal{H}\partial_{x}^{2}(x^{2}u)+4w_{N}^{\theta}\partial_{x}(xu)-2w_{N}^{\theta}\mathcal{H}u\\
                                               =&\ Q_1 +Q_2 +Q_3,
\end{split}
\end{equation*}
Since $\phi\in\dot{\mathcal{Z}}_{s,r}$, we deduce that
$\hat{\phi}(0,\eta)=\hat{u}(t,0,\eta)=0,$ for all $t\in [0,T],$ (see
\eqref{fourieru}), which implies that $\mathcal{H}(xu)=x\mathcal{H}u.$
Therefore, the boundedness of $\mathcal{H}$ in $L^2_x$, gives
\begin{equation*}
\begin{split}
\|Q_{3}\|=& \ 2\|w_{N}^{\theta}\mathcal{H}u\|\\
         \leq& \ c(\|\mathcal{H}u\|+\|x\mathcal{H}u\|+\|y\mathcal{H}u\|)\\
         =&\ c(\|u\|+\|\mathcal{H}(xu)\|+\|\mathcal{H}(yu)\|)\\
         \leq& \ cM_{3}.
\end{split}
\end{equation*}
Note that
\begin{equation*}
\begin{split}
Q_{2}=&\ w_{N}^{\theta}\mathcal{H}\partial_{x}(xu)\\
     =&\ [w_{N}^{\theta};\mathcal{H}]\partial_{x}(xu)+\mathcal{H}(w_{N}^{\theta}\partial_{x}(xu))\\
=&\ Q_{2}^{1}+Q_{2}^{2}.
\end{split}
\end{equation*}
A simple analysis reveals that
$$
\|Q_{2}^{1}\|\leq \|\partial_{x}w_{N}^{\theta}\|_{\infty}\|xu\|\leq cM_{3}.
$$
Moreover, Lemma \ref{inter} yields
\begin{equation*}
\begin{split}
\|Q_{2}^{2}\|\leq& \ \|w_{N}^{\theta}u\|+\|xw_{N}^{\theta}\partial_{x}u\|\\
\leq& \ \|\langle x,y \rangle^{\theta}u\|+\|\langle x,y \rangle^{\theta+1}\partial_{x}u\|\\
\leq& \ c(\|\langle x,y \rangle u\|+\|\langle x, y \rangle^{3/2+\theta}u\|+\|J^{3+2\theta}\|)\\
\leq& \ cM_{3}.
\end{split}
\end{equation*}
For $Q_1$  we can write
\begin{equation*}
\begin{split}
Q_1=&\ w_{N}^{\theta}\mathcal{H}\partial_{x}^{2}(x^{2}u)\\
   =&\ [w_{N}^{\theta};\mathcal{H}]\partial_{x}^{2}(x^{2}u)+\mathcal{H}(w_{N}^{\theta}\partial_{x}^{2}(x^{2}u))\\
=&\ V_{1}+\mathcal{H}\partial_{x}^{2}(x^{2}w_{N}^{\theta}u)-2\mathcal{H}(\partial_{x}w_{N}^{\theta}\partial_{x}(x^{2}u))
-\mathcal{H}(\partial_{x}^{2}w_{N}^{\theta}x^{2}u)\\
=& \ V_1 +V_2 +V_3 +V_4.
\end{split}
\end{equation*}
Inserting $V_2$ in \eqref{600} one has that its contribution is null. By
Theorem \ref{Comu}
\begin{equation*}
\|V_1\|\leq cM_3,\qquad\|V_4\|\leq c M_3.
\end{equation*}
In view of Lemma \ref{inter},
\begin{equation*}\nonumber
\begin{split}
\|V_3\|\leq& \
c(\|x^{2}\partial_{x}w_{N}^{\theta}\partial_{x}u\|+2\|x\partial_{x}w_{N}^{\theta}u\|)\\
\leq & \ c(\|xw_{N}^{\theta}\partial_{x}u\|+\|w_{N}^{\theta}u\|)\\
\leq & \ cM_{3}.
\end{split}
\end{equation*}
Also,
\begin{equation}
(x^{4}w_{N}^{2\theta}u,uu_{x})\leq \|u_{x}\|_{\infty}\|x^{2}w_{N}^{\theta}u\|^{2}.
\end{equation}
Similar computations  as the previous ones leads us to the inequalities
\begin{equation}\label{144}
 |\partial_{x}(x^{4}w_{N}^{2\theta})|, |\partial_{y}(x^{4}w_{N}^{2\theta})|, |\partial_{y}^{2}(x^{4}w_{N}^{2\theta})|\leq c|x^3 w_{N}^{2\theta}|.
\end{equation}
Therefore, integrating by parts, using \eqref{144} and  Lemma \ref{inter}, we
obtain
\begin{equation*}
\begin{split}
\int x^{4} w_{N}^{2\theta}u\partial_{x}\partial_{y}^{2}u=&\
-\frac{1}{2}\int\Big(\frac{1}{2}\partial_{y}^{2}(x^{4}w_{N}^{2\theta})u\partial_{x}u-\partial_{x}(x^{4}w_{N}^{2\theta})\partial_{y}^{2}u u\\
&-2\partial_{y}(x^{4}w_{N}^{2\theta})\partial_{x}\partial_{y}u u\Big) \\
 \leq
&\int |\partial_{y}^{2}(x^{4}w_{N}^{2\theta})u\partial_{x}u|+\int |x^3
w_{N}^{2\theta}\partial_{y}^{2}u u|+\int |x^3
w_{N}^{2\theta}\partial_{x}\partial_{y}u u|\\
\leq& \
\|x^{2}w_{N}^{\theta}u\|\|xw_{N}^{\theta}\partial_{x}u\|+\|xw_{N}^{\theta}\partial_{y}^{2}u\|\|x^2
w_{N}^{\theta}u\|\\\nonumber &+\|x
w_{N}^{\theta}\partial_{x}\partial_{y}u\|\|x^2 w_{N}^{\theta}u\|\\
\leq & c\Big( M_{3}^{2}+\|x^{2}w_{N}^{\theta}u\|^{2}+\|xw_{N}^{\theta}\partial_{y}^{2}u\|^{2}+\|x w_{N}^{\theta}\partial_{x}\partial_{y}u\|^{2}\Big)\\
=&  c\Big( M_{3}^2+\|x^2 w_{N}^{\theta}u\|^{2}+ F_{1} +F_{2}\Big).
\end{split}
\end{equation*}
Putting of  the above estimates together, we obtain
\begin{equation}\label{109}
\frac{d}{dt}\|x^{2}w_{N}^{\theta}u\|^2\leq
c(M_{3}^2+\|x^{2}w_{N}^{\theta}u\|^2
+\|xw_{N}^{\theta}\partial_{y}^{2}u\|^2+\|x
w_{N}^{\theta}\partial_{x}\partial_{y}u\|^2).
\end{equation}

Note that, in view of the terms $F_1$ and $F_2$, \eqref{109} is not good
enough to our purpose. So, in what follows, we will control $F_1$ and $F_2$.
The idea is to obtain an estimate of the form
\begin{equation}\label{GestteZ}
\frac{d}{dt}G\leq cG,
\end{equation}
where $G$ is a sum of terms which include $F_1$ and $F_2$.

Taking the derivative  with respect to $y$ twice in \eqref{bozk} and
multiplying it by $x^2 w_{N}^{2\theta}\partial_{y}^{2}u$, we have
\begin{equation}\label{108}
\begin{split}
\frac{1}{2}\frac{d}{dt}\|xw_{N}^{\theta}\partial_{y}^{2}u\|^{2}+&(xw_{N}^{\theta}\partial_{y}^{2}u,xw_{N}^{\theta}\mathcal{H}\partial_{x}^{2}(\partial_{y}^{2}u))\\
&+(x^2 w_{N}^{2\theta}\partial_{y}^{2}u,
\partial_{x}\partial_{y}^{2}\partial_{y}^{2}u) +(x^2
w_{N}^{2\theta}\partial_{y}^{2}u, \partial_{y}^{2}(uu_{x}))=0.
\end{split}
\end{equation}
From the identity
$$
x\mathcal{H}\partial_{x}^{2}(\partial_{y}^{2}u)=\mathcal{H}(\partial_{x}^{2}(x\partial_{y}^{2}u))-2\mathcal{H}\partial_{x}\partial_{y}^2u,
$$
we deduce that
$$
w_{N}^{\theta}x\mathcal{H}\partial_{x}^{2}(\partial_{y}^{2}u)=
w_{N}^{\theta}\mathcal{H}(\partial_{x}^{2}(x\partial_{y}^{2}u))-2w_{N}^{\theta}\mathcal{H}\partial_{x}\partial_{y}^{2}u=E_{1}+E_{2}.
$$
Write
\begin{equation*}
\begin{split}
E_{2}=&\ -2w_{N}^{\theta}\mathcal{H}\partial_{x}\partial_{y}^{2}u\\
=&\ [w_{N}^{\theta};\mathcal{H}]\partial_{x}\partial_{y}^{2}u+\mathcal{H}(w_{N}^{\theta}\partial_{x}\partial_{y}^{2}u)\\
=&\ E_{2}^{1}+E_{2}^{2}.
\end{split}
\end{equation*}
Theorem \ref{Comu} implies
$$
\|E_{2}^{1}\|\leq
\|\partial_{x}w_{N}^{\theta}\|_{\infty}\|\partial_{y}^{2}u\|\leq cM_{3}.
$$
Lemma \ref{inter}, with $a=3+2\theta,$ $\alpha=\frac{3}{3+2\theta}$ and
$b=3/2+\theta$ gives
$$\|E_{2}^{2}\|\leq \|w_{N}^{\theta}\partial_{x}\partial_{y}^{2}u\|\leq \|\langle x, y \rangle^{\theta}\partial_{x}\partial_{y}^{2}u\|\leq M_3+2(\|\langle x,y \rangle^{3/2+\theta}u\|+\|J^{3+2\theta}u\|)\leq cM_{3}.$$
Next, we write
\begin{equation*}
\begin{split}
E_1=&\ [w_{N}^{\theta};\mathcal{H}]\partial_{x}^{2}(x\partial_{y}^{2}u)+\mathcal{H}(w_{N}^{\theta}\partial_{x}^{2}(x\partial_{y}^{2}u))\\
=&\ k_{1}+\mathcal{H}\partial_{x}^{2}(xw_{N}^{\theta}\partial_{y}^{2}u)-2\mathcal{H}\partial_{x}w_{N}^{\theta}\partial_{x}(x\partial_{y}^{2}u)+
\mathcal{H}(\partial_{x}^{2}w_{N}^{\theta}x\partial_{y}^{2}u)\\
=&\ k_1 +k_2+k_3+k_4.
\end{split}
\end{equation*}
It is easily seen that
$$
\|k_1\|\leq
\|\partial_{x}^{2}w_{N}^{\theta}\|_{\infty}\|x\partial_{y}^{2}u\|\leq cM_3,
\quad\|k_4\|\leq cM_3
$$
Moreover, inserting $k_2$ in \eqref{108} we can see that its contribution is
null. In addition,
\begin{equation*}
\begin{split}
k_3\leq & \ \|\mathcal{H}\partial_{x}w_{N}^{\theta}
\partial_{y}^{2}u\|+\|\mathcal{H}\partial_{x}w_{N}^{\theta}x\partial_{x}\partial_{y}^{2}u\|\\
\leq &\
\|w_{N}^{\theta-1}\partial_{y}^{2}u\|+\|w_{N}^{\theta}\partial_{x}\partial_{y}^{2}u\|\\
\leq & \ cM_3.
\end{split}
\end{equation*}
An application of Lemma \ref{inter}, with $a=4+2\theta,$
$\alpha=\frac{2}{2+\theta}$ and $b=2+\theta,$ yields
\begin{equation}\label{200}
\|w_{N}^{\theta}\partial_{y}^{2}\partial_{y}^{2}u\|\leq
M_{3}+\|w_{N}^{2+\theta}u\|+ \|J^{4+2\theta}u\|\leq M_3 +
\|x^{2}w_{N}^{\theta}u\|+\|y^{2}w_{N}^{\theta}u\|.
\end{equation}
Similarly,
\begin{equation}\label{202}
\|w_{N}^{\theta}\partial_{x}\partial_{y}\partial_{y}^{2}u\|\leq M_3 + \|x^{2}w_{N}^{\theta}u\|+\|y^{2}w_{N}^{\theta}u\|.
\end{equation}
Taking derivatives, it is easy to see that
\begin{equation}\label{201}
|\partial_{x}(x^2 w_{N}^{2\theta})|,|\partial_{y}(x^2 w_{N}^{2\theta})|,|\partial_{y}^{2}(x^2 w_{N}^{2\theta})|\leq c|xw_{N}^{2\theta}|.
\end{equation}
Using \eqref{200}, \eqref{202} and \eqref{201}, we get
\begin{equation*}
\begin{split}
\int x^2 w_{N}^{2\theta}\partial_{y}^{2}u
\partial_{x}\partial_{y}^{2}\partial_{y}^{2}u=& -\frac{1}{2}\int
(\partial_{y}^{2}(x^2
w_{N}^{2\theta})\partial_{y}^{2}u\partial_{x}\partial_{y}^{2}u -
\partial_{x}(x^2 w_{N}^{2\theta})\partial_{y}^{2}\partial_{y}^{2}u
\partial_{y}^{2}u\\
&-2\partial_{y}(x^2 w_{N}^{2\theta})\partial_{x}\partial_{y}\partial_{y}^{2}u\partial_{y}^{2}u)dxdy\\
\leq & c\Big( M_{3}^{2}+
\|w_{N}^{\theta}\partial_{y}^{2}\partial_{y}^{2}u\|\|xw_{N}^{\theta}\partial_{y}^{2}u\|+
\|w_{N}^{\theta}\partial_{x}\partial_{y}\partial_{y}^{2}u\|\|xw_{N}^{\theta}\partial_{y}^{2}u\|\Big)\\
\leq& c\Big(
M_{3}^{2}+\|x^{2}w_{N}^{\theta}u\|^{2}+\|y^{2}w_{N}^{\theta}u\|^2+\|xw_{N}^{\theta}\partial_{y}^{2}u\|^2\Big).
\end{split}
\end{equation*}
Finally,
\begin{equation*}
\begin{split}
(x^2 w_{N}^{2\theta}\partial_{y}^{2}u, \partial_{y}^{2}(uu_x))=&\ (x^2 w_{N}^{2\theta}\partial_{y}^{2}u, \partial_{y}^{2}u u_{x}+2u_{y}u_{xy}+uu_{xyy})\\
\leq& \ \|xw_{N}^{\theta}\partial_{y}^{2}u\|^{2}\|u_{x}\|_{\infty}+\|xw_{N}^{\theta}\partial_{y}^{2}u\|(\|xw_{N}^{\theta}\partial_{y}u\|\|u_{xy}\|_{\infty}\\
&+\|xw_{N}^{\theta}u\|\|u_{xyy}\|_{\infty})\\
\leq & \ (M_{3}+1)\|xw_{N}^{\theta}\partial_{y}^{2}u\|^{2}+M_3.
\end{split}
\end{equation*}
Collecting all the above estimates, we deduce the inequality
\begin{eqnarray}\label{estF1F2e}
\frac{d}{dt}\|xw_{N}^{\theta}\partial_{y}^{2}u\|^2\leq c(M_{3}^2+\|x^{2}w_{N}^{\theta}u\|^2
+\|y^{2}w_{N}^{\theta}u\|^2+\|xw_{N}^{\theta}\partial_{y}^{2}u\|^2).
\end{eqnarray}
Analogously we can obtain an inequality  involving $F_2$.

Multiplying the equation \eqref{bozk} by $y^4 w_{N}^{2\theta}u,$ we can
obtain an estimate similar to \eqref{109}, where another two terms as those
for $F_1$ and $F_2$ appear (but now with a multiplying  factor of $y$ instead
of $x$). Thus, we can proceed as above.

As a final step, by  writing
$$
g_{1}=\|x^2w_{N}^{\theta} u\|,\quad
g_{2}=\|xw_{N}^{\theta}\partial_{y}^{2}u\|, \quad g_{3}=\|x
w_{N}^{\theta}\partial_{x}\partial_{y}u\|,
$$
and
$$
g_{4}=\|y^2w_{N}^{\theta} u\|,\quad
g_{5}=\|yw_{N}^{\theta}\partial_{y}^{2}u\|, \quad g_{6}=\|y
w_{N}^{\theta}\partial_{x}\partial_{y}u\|,
$$
we deduce the following system of inequalities
$$
\frac{d}{dt}g_{j}^{2}\leq c\sum_{i=1}^{6}g_{i}^{2}, \qquad j=1,\ldots,6.
$$
By defining  $G=\sum_{i=1}^{6}g_{i}^{2},$  we then get the desired estimate
\eqref{GestteZ}. The rest of the proof runs as in Case a).\\

\noindent {\bf Case b).} $r\in [3,7/2)$ and $s\geq 2r$. Write  $r=2+\theta,$
where $\theta\in [1,3/2)$. Define
 $$
 M_{4}=\sup_{[0,T]}\{\|\langle x, y
\rangle^{3/2+\theta}u\|+\|u\|_{H^{s}}\}.
$$
Here, the estimates are similar to those ones in Case a), except for the
terms
$$
\tilde{Q}_{3}= \ 2w_{N}^{\theta}\mathcal{H}u,
$$
$$
\tilde{Q}_{2}=\ w_{N}^{\theta}\mathcal{H}\partial_{x}(xu),
$$
and
$$
\tilde{E}_{2}=\ -2w_{N}^{\theta}\mathcal{H}\partial_{x}\partial_{y}^{2}u,
$$
which can be estimated  using Theorems \ref{boundhilbert} and \ref{indc} and
Remark \ref{condap}. Indeed,
\begin{equation*}
\begin{split}
\|\tilde{Q}_{3}\|\leq &\ 2\|\langle x, y \rangle^{\theta} \mathcal{H}u\|\\
\leq& \ c(\|\langle x, y \rangle^{\theta-1} \mathcal{H}u\|+\|x\langle x, y \rangle^{\theta-1}\mathcal{H} u\|
+\|y\langle x, y \rangle^{\theta-1}\mathcal{H} u\|)\\
\leq& \ c(\|\mathcal{H}u\|+\|x\mathcal{H}u\|+\||x|^{\theta-1}\mathcal{H}u\|+\||y|^{\theta-1}\mathcal{H}u\|+
\||x|^{\theta-1}\mathcal{H}u\|+\\
&+\|x|y|^{\theta-1}\mathcal{H}u\|+\|y\mathcal{H}u\|+
\|y|x|^{\theta-1}\mathcal{H}u\|+\|y|y|^{\theta-1}\mathcal{H}u\|)\\
=& \ c(\|\mathcal{H}u\|+\|\mathcal{H}(xu)\|+\||x|^{\theta-1}\mathcal{H}u\|+\||y|^{\theta-1}\mathcal{H}u\|+
\||x|^{\theta-1}\mathcal{H}(xu)\|+\\
&+\||y|^{\theta-1}\mathcal{H}(xu)\|+\|\mathcal{H}(yu)\|+
\||x|^{\theta-1}\mathcal{H}(yu)\|+\||y|^{\theta-1}\mathcal{H}(yu)\|)\\
\leq& \ c(\|u\|+\|xu\|+c^{*}\||x|^{\theta-1}u\|+\|\mathcal{H}(|y|^{\theta-1}u)\|+
c^{*}\||x|^{\theta-1}xu\|+\\
&+\|\mathcal{H}(|y|^{\theta-1}xu)\|+\|yu\|+
c^{*}\||x|^{\theta-1}yu\|+\|\mathcal{H}(|y|^{\theta-1}yu)\|)\\
\leq& \ c\|\langle x, y \rangle^{\theta}u\|\\
\leq & \ cM_4.
\end{split}
\end{equation*}

\begin{equation*}
\begin{split}
\|\tilde{Q}_2\|=&\ 4\|w_{N}^{\theta}\mathcal{H}\partial_{x}(xu)\|\\\nonumber
       \leq & \ c(\|\langle x,y \rangle^{\theta-1}\mathcal{H}\partial_{x}(xu)\|+\|\langle x, y \rangle^{\theta-1}\mathcal{H}(x\partial_{x}(xu))\|+\|\langle x,y \rangle^{\theta-1}\mathcal{H}(y \partial_{x}(xu))\|)\\
\leq& \ c(\|\mathcal{H}\partial_{x}(xu)\|+\||x|^{\theta-1}\mathcal{H}\partial_{x}(xu)\|+\||y|^{\theta-1}\mathcal{H}\partial_{x}(xu)\|+\|\mathcal{H}(x\partial_{x}(xu))\|\\
&+\||x|^{\theta-1}\mathcal{H}(x\partial_{x}(xu))\|+\||y|^{\theta-1}\mathcal{H}(x\partial_{x}(xu))\|+\|\mathcal{H}(y\partial_{x}(xu))\|\\
&+\||x|^{\theta-1}\mathcal{H}(y\partial_{x}(xu))\|+\||y|^{\theta-1}\mathcal{H}(y\partial_{x}(xu))\|)\\
\leq & \ c(\|\partial_{x}(xu)\|+c^{*}\||x|^{\theta-1}\partial_{x}(xu)\|+\||y|^{\theta-1}\partial_{x}(xu)\|+\|x\partial_{x}(xu)\|\\
&+c^{*}\||x|^{\theta-1}x\partial_{x}(xu)\|+\||y|^{\theta-1}x\partial_{x}(xu)\|+\|y\partial_{x}(xu)\|+c^{*}\||x|^{\theta-1}y\partial_{x}(xu)\|\\
&+\||y|^{\theta-1}y\partial_{x}(xu)\|)\\
\leq& \ c(M_4 + \|\langle x, y \rangle^{\theta}u\|+\|\langle x, y \rangle^{\theta+1}\partial_{x}u\|)\\
\leq & \ c(M_4 + \|\langle x,y \rangle^{3/2 +\theta}u\|+\|J^{3+2\theta}u\|)\\
\leq & \ cM_4,
\end{split}
\end{equation*}
and
\begin{equation*}
\begin{split}
\|\tilde{E}_2\|\leq& \ \|\langle x,y \rangle^{\theta-1}\mathcal{H}\partial_{x}\partial_{y}^{2}u\|+\|\langle x,y \rangle^{\theta-1}x\mathcal{H}\partial_{x}\partial_{y}^{2}u\|+\|\langle x,y \rangle^{\theta-1}y\mathcal{H}\partial_{x}\partial_{y}^{2}u\|\\
\leq & \ \|\langle x,y \rangle^{\theta-1}\mathcal{H}\partial_{x}\partial_{y}^{2}u\|+\|\langle x,y \rangle^{\theta-1}\mathcal{H}(x\partial_{x}\partial_{y}^{2}u)\|\\
&+\|\langle x,y \rangle^{\theta-1}\mathcal{H}(y\partial_{x}\partial_{y}^{2}u)\|\\
\leq &\ c(\|\mathcal{H}\partial_{x}\partial_{y}^{2}u\|+\||x|^{\theta-1}\mathcal{H}(x\partial_{x}\partial_{y}^{2}u)\|+\||y|^{\theta-1}\mathcal{H}(y\partial_{x}\partial_{y}^{2}u)\|\\
&+\|\mathcal{H}(x\partial_{x}\partial_{y}^{2}u)\|+\||x|^{\theta-1}\mathcal{H}(x\partial_{x}\partial_{y}^{2}u)\|+\||y|^{\theta-1}\mathcal{H}(x\partial_{x}\partial_{y}^{2}u)\|\\
&+\|\mathcal{H}(y\partial_{x}\partial_{y}^{2}u)\|+\||x|^{\theta-1}\mathcal{H}(y\partial_{x}\partial_{y}^{2}u)\|+\||y|^{\theta-1}\mathcal{H}(y\partial_{x}\partial_{y}^{2}u)\|)\\
\leq & \ c\|\langle x,y \rangle^{\theta}\partial_{x}\partial_{y}^{2}u\|\\
\leq & \ M_4+2(\|\langle x,y \rangle^{3/2+\theta}u\|+\|J^{3+2\theta}u\|)\\
\leq& \ cM_4.
\end{split}
\end{equation*}
From this point on, one can proceed as in Case a) and conclude the proof of
Case b). The  proof of Theorem \ref{B1} is thus completed.

\section{Unique continuation principle} \label{uniquep}

This section is devoted to establish Theorems \ref{P1} and \ref{P2}. We
follow closely the arguments in \cite{GermanPonce}. Indeed, the main idea is
to explore the ``bad'' behavior of the BO-ZK in the $x$-direction, which, in
some sense, is similar to the one presented by the BO  equation
\eqref{boequation}. We pointed out that a similar approach was also
successfully applied to the Benjamin equation in \cite{jose}.

\begin{proof}[Proof of Theorem \ref{P1}]
Let us start by noting that the solution of \eqref{bozk} can be represented
by Duhamel's formula
\begin{equation}\label{121}
u(t)=U(t)\phi -\int_{0}^{t}U(t-t')u(t')\partial_{x}u(t')dt',
\end{equation}
where $U(t)\phi$  is  the solution of the IVP associated with the linear
BO-ZK equation. It is easy to check that, via its Fourier transform,
$$
\widehat{U(t)\phi}(\xi,\eta)=e^{it\xi(\eta^2-|\xi|)}\hat{\phi}(\xi,\eta).
$$
Without loss of generality we assume $t_1=0.$ Thus, since $\phi \in
\mathcal{Z}_{5,5/2},$ it follows from Theorem \ref{B1}  that
\begin{equation}\label{H5conc}
u\in C([0,T];H^{5}\cap L^{2}_r), \qquad 0<r<5/2.
\end{equation}

By multiplying \eqref{121} by $|x|^{5/2}$ and then taking the Fourier transform lead to
\begin{equation}\label{aftH5}
D^{1/2}_\xi\partial_\xi^2(\widehat{u(t)})=D^{1/2}_\xi\partial_\xi^2\big(e^{it\xi(\eta^2-|\xi|)}\hat{\phi}\big)
-\int_0^tD^{1/2}_\xi\partial_\xi^2\big(e^{i(t-t')\xi(\eta^2-|\xi|)}\hat{z}\big)dt',
\end{equation}
where $z=\frac{1}{2}\partial_{x}u^{2}.$ Fixed $t\in[0,T]$, remark that if $\langle x,y\rangle^{5/2}U(t)\phi\in
L^2(\R^2)$ then it must be the case that $|x|^{5/2}U(t)\phi\in L^2(\R^2)$,
which, by Plancherel's identity, implies that
$$
D^{1/2}_\xi\partial_\xi^2\big(e^{it\xi(\eta^2-|\xi|)}\hat{\phi}\big)\in
L^2(\R^2).
$$
We will prove that this is possible only if $\hat{\phi}(0,\eta)=0$, for all $\eta\in\R$. The idea goes as follows:  since
\begin{equation}\label{126}
\begin{split}
\partial_{\xi}^{2}(e^{it\xi(\eta^2-|\xi|)}\hat{\phi})=&\ e^{it\xi(\eta^2-|\xi|)}\Big((-2it \mathrm{sgn}(\xi)-4t^{2}\xi^{2}+4t^{2}\eta^{2}|\xi|-t^{2}\eta^{4})\hat{\phi}+(2it\eta^{2}\\
&-4it|\xi|)\partial_{\xi}\hat{\phi}+\partial_{\xi}^{2}\hat{\phi}\Big),
\end{split}
\end{equation}
we will show that all terms in \eqref{aftH5}, except the one involving
$\sgn(\xi)$, arising from the linear part (see \eqref{126}), have the
appropriate decay for all $t\in[0,T]$. This in turn, will imply the desired.

On one hand, in the $x$-direction, the BOZK equation has a similar behavior
as the BO equation, so, following the ideas in \cite{GermanPonce}, we need to
localize in the $\xi$-direction. On the other hand, to control all terms, we
need some strong decay in the $\eta$-direction but not localization. To do
so, define $\chi(\xi,\eta)=\tilde{\chi}(\xi)e^{-\eta^2}$, where $\tilde{\chi}
\in C_{0}^{\infty}(\R)$ is  such that $supp \ \tilde{\chi} \subset
(-\epsilon,\epsilon)$ and $\tilde{\chi} =1$ in $(-\epsilon/2,\epsilon/2).$
Note, in particular, that $\chi\in L^\infty_\eta H^2_\xi$.

With the function $\chi$ in hand, we  write the linear part of Duhamel's
formula as
\begin{equation*}
\begin{split}
\chi D_{\xi}^{1/2}\partial_{\xi}^{2}(e^{it\xi(\eta^2-|\xi|)}\hat{\phi})=&\ [\chi;D_{\xi}^{1/2}]\partial_{\xi}^{2}\Big(e^{it\xi(\eta^2-|\xi|)}\hat{\phi})+
D_{\xi}^{1/2}(\chi\partial_{\xi}^{2}(e^{it\xi(\eta^2-|\xi|)}\hat{\phi})\Big)\\
=&\ A+B.
\end{split}
\end{equation*}
In what follows, the constant $c$ will depend on $T$ and the norms of $\chi$.
From Proposition \ref{C}, Lemma \ref{inter}, Plancherel's identity, and
\eqref{126} it follows that
\begin{equation}\label{teoP1a}
\begin{split}
\|A\|=& \ \|\|[\chi;D_{\xi}^{1/2}]\partial_{\xi}^{2}(e^{it\xi(\eta^2-|\xi|)}\hat{\phi})\|_{L^{2}_{ \xi}}\|_{L^{2}_{\eta}}\\
\leq & \ c\|\|\chi\|_{H^{1}_{\xi}}\|\partial_{\xi}^{2}(e^{it\xi(\eta^2-|\xi|)}\hat{\phi})\|_{L^{2}_{\xi}}\|_{L^{2}_{\eta}}\\
\leq & \ c(\|\hat{\phi}\|+\|\xi^2\hat{\phi}\| +\|\eta^{2}\xi\hat{\phi}\|+\|\eta^{4}\hat{\phi}\|+\|\eta^{2}\partial_{\xi}\hat{\phi}\|+\|\xi\partial_{\xi}\hat{\phi}\| +\|\partial_{\xi}^{2}\hat{\phi}\|)\\
=&\ c(\|\phi\|+\|\partial_{x}^{2}\phi\|+\|\partial_{y}^2
\partial_{x}\phi\|+\|\partial_{y}^{4}\phi\|+\|\partial_{y}^{2}(x\phi)\|+\|\partial_{x}(x\phi)\|+\|x^{2}\phi\|).
\end{split}
\end{equation}
All terms in the right-hand side of \eqref{teoP1a} are finite because
$\phi\in \mathcal{Z}_{4,2}$.

Now write
\begin{equation*}
\begin{split}
B=&\ D^{1/2}_{\xi}\Big((-2it \mathrm{sgn}(\xi)-4t^{2}\xi^{2}+4t^{2}\eta^{2}|\xi|-t^{2}\eta^{4})\hat{\phi}+(2it\eta^{2}\\
&-4it|\xi|)\partial_{\xi}\hat{\phi}+\partial_{\xi}^{2}\hat{\phi}\Big)\\
=&\ B_1 +B_2 +B_3 +B_4+B_5+B_6+B_7.
\end{split}
\end{equation*}
Let us estimate the $L^2$ norm of $B_7$. Theorem \ref{stein}, Proposition
\ref{Pontual}, and Lemma \ref{P} imply
\begin{equation*}
\begin{split}
\|B_7\|=& \ \|D_{\xi}^{1/2}(\chi e^{it\xi(\eta^2-|\xi|)}\partial_{\xi}^{2}\hat{\phi})\|\\
=& \ \|\|D_{\xi}^{1/2}(\chi e^{it\xi(\eta^2-|\xi|)}\partial_{\xi}^{2}\hat{\phi})\|_{L^{2}_{\xi}}\|_{L^{2}_{\eta}}\\
\leq & \ c(\|\|\chi e^{it\xi(\eta^2-|\xi|)}\partial_{\xi}^{2}\hat{\phi}\|_{L^{2}_{\xi}}+\|\mathcal{D}_{\xi}^{1/2}(\chi e^{it\xi(\eta^2-|\xi|)}\partial_{\xi}^{2}\hat{\phi})\|_{L^{2}_{\xi}}\|_{L^{2}_{\eta}})\\
\leq& \ c(\|\chi e^{it\xi(\eta^2-|\xi|)}\partial_{\xi}^{2}\hat{\phi}\|+\|\mathcal{D}_{\xi}^{1/2}(\chi e^{it\xi(\eta^2-|\xi|)}\partial_{\xi}^{2}\hat{\phi})\|)\\
\leq & \ c(\|x^2 \phi\|+\|\mathcal{D}_{\xi}^{1/2}(e^{-it\xi |\xi|})\chi e^{it \xi \eta^{2}}\partial_{\xi}^{2}\hat{\phi}\|+ \|e^{-it\xi |\xi|}\mathcal{D}_{\xi}^{1/2}(\chi e^{it \xi \eta^{2}}\partial_{\xi}^{2}\hat{\phi})\|)\\
\leq & \ c( \|x^2 \phi\|+ \|(t^{1/4}+t^{1/2}|\xi|^{1/2})\chi \partial_{\xi}^{2}\hat{\phi}\|+\|\mathcal{D}_{\xi}^{1/2}(e^{it\xi \eta^{2}})\chi \partial_{\xi}^{2}\hat{\phi}\|\\
&+\|e^{-it\xi \eta^{2}}\mathcal{D}_{\xi}^{1/2}(\chi  \partial_{\xi}^{2}\hat{\phi})\|)\\
\leq& \ c( \|x^2 \phi\|+ \|(t^{1/4}+t^{1/2}|\xi|^{1/2})\chi \|_{\infty}\|\partial_{\xi}^{2}\hat{\phi}\|+\|(\eta^{2}t)^{1/2}\chi \partial_{\xi}^{2}\hat{\phi}\|+\|\mathcal{D}_{\xi}^{1/2}(\chi)\partial_{\xi}^{2}\hat{\phi}\|\\
&+\|\chi\mathcal{D}_{\xi}^{1/2}(\partial_{\xi}^{2}\hat{\phi})\|)\\
\leq &\ c(\|x^2 \phi\|+\|(\eta^{2}t)^{1/2}\chi\|_{\infty}\| \partial_{\xi}^{2}\hat{\phi}\|+\|\mathcal{D}_{\xi}^{1/2}(\chi)\|_{\infty}\|\partial_{\xi}^{2}\hat{\phi}\|+
\|\chi\|_{\infty}\|\mathcal{D}_{\xi}^{1/2}\partial_{\xi}^{2}\hat{\phi}\|)\\
\leq& \ c\|\langle x,y \rangle^{2+1/2}\phi\|.
\end{split}
\end{equation*}
Control on $B_2,B_3,B_4,B_5 \ \mbox{and} \ B_6$ in $L^{2}(\R^2)$ are obtained
in a similar fashion, so we omit the details. Note that we do not estimate
$B_1$. However, if we show that the integral  part of Duhamel's formula is in
$L^2(|x|^5dxdy)$ then we will conclude that $B_1\in L^2(\R^2)$ (for any fixed
$t\in[0,T]$).

To do so, we localize again using the function $\chi$. In fact, using
commutators, the integral part in \eqref{aftH5} reads as
\begin{equation}\label{teoP1b}
\begin{split}
\int_{0}^{t}[\chi;&D_{\xi}^{1/2}]\Big(e^{i(t-t')\xi(\eta^2-|\xi|)}\big(-2i(t-t')\mathrm{sgn}(\xi)\hat{z}
-4(t-t')^{2}\xi^2 \hat{z} +4(t-t')^{2}\eta^2 |\xi|\hat{z}\\
&-(t-t')^{2}\eta^{4}\hat{z}-4(t-t')|\xi|\partial_{\xi}\hat{z}+2i(t-t')\eta^{2}\partial_{\xi}\hat{z}+\partial_{\xi}^{2}\hat{z}\big)\Big)\\
&+D_{\xi}^{1/2}\Big(\chi\big(e^{i(t-t')\xi(\eta^2-|\xi|)}(-2i(t-t')\mathrm{sgn}(\xi)\hat{z}-4(t-t')^{2}\xi^{2}\hat{z}+4(t-t')^{2}\eta^{2}|\xi|\hat{z}\\
&-(t-t')^{2}\eta^{4}\hat{z}-4i(t-t')|\xi|\partial_{\xi}\hat{z}+2i(t-t')\eta^{2}\partial_{\xi}\hat{z}+\partial_{\xi}^{2}\hat{z})\big)\Big)dt'\\
=&\ C_1+...+C_{7}+D_1+...+D_7.
\end{split}
\end{equation}
We observe that the terms involving the highest regularity and decay are
$C_4$ and $D_{7},$ respectively. In the sequel, we show their $L^{2}$
estimates. From Proposition \ref{C}, we obtain
\begin{equation}\label{c4esiteo}
\begin{split}
\|C_4\|\leq & \ t^2\|\|\|\chi\|_{H^{1}_{\xi}}\|e^{i(t-t')\xi(\eta^2-|\xi|)}(t-t')^{2}\eta^4 \hat{z}\|_{L^{2}_{\xi}}\|_{L^{2}_{\eta}}\|_{L^{1}_{T}}\\
\leq & \ c\|\|\partial_{y}^{4}z\|\|_{L^{1}_{T}}\\
\leq & \ c \|\|\partial_{y}^{4}\partial_{x}u^2\|\|_{L^{1}_{T}}\\
\leq& \ c\|u\|_{L^{\infty}_{T} H^{5}}^{2}.
\end{split}
\end{equation}
The right-hand side of \eqref{c4esiteo} in finite thanks to \eqref{H5conc}.

Regarding the $L^2$ norm of $D_7$, first observe that using Lemma
\ref{inter}, we deduce that
\begin{equation}\label{teoP1c}
xu\in H^{2}(\R^2) \ \mbox{and} \ \||x|^{3/2}\partial_{x}u\|\leq
c(\|u\|_{\mathcal{Z}_{4,2}}+\|\langle x,y \rangle^{1/2}u\|).
\end{equation}
Let $\bar{D}_{7}=D^{1/2}_{\xi}(\chi
e^{i(t-t')\xi(\eta^{2}-|\xi|)})\partial_{\xi}^{2}\hat{z}$. Theorem
\ref{stein}, Proposition \ref{Pontual}, Lemma \ref{P}, and \eqref{teoP1c}
yield
\begin{equation*}
\begin{split}
\|\bar{D}_{7}\|=&\ \|D^{1/2}_{\xi}(\chi e^{i(t-t')\xi(\eta^{2}-|\xi|)}\partial_{\xi}^{2}\hat{z})\|\\
=&\ \|\|D^{1/2}_{\xi}(\chi e^{i(t-t')\xi(\eta^{2}-|\xi|)}\partial_{\xi}^{2}\hat{z})\|_{L^{2}_{\xi}}\|_{L^{2}_{\eta}}\\
 \leq & \ \|\|\chi e^{i(t-t')\xi(\eta^2-|\xi|)}\partial_{\xi}^{2}\hat{z}\|_{L^{2}_{\xi}}+\|\mathcal{D}_{\xi}^{1/2}(\chi e^{i(t-t')\xi(\eta^2-|\xi|)}\partial_{\xi}^{2}\hat{z})\|_{L^{2}_{\xi}}\|_{L^{2}_{\eta}}\\
\leq& \ c(\|\chi e^{i(t-t')\xi(\eta^2-|\xi|)}\partial_{\xi}^{2}\hat{z}\|+\|\mathcal{D}_{\xi}^{1/2}(\chi e^{i(t-t')\xi(\eta^2-|\xi|)}\partial_{\xi}^{2}\hat{z})\|)\\
\leq & \ c(\|x^{2}z\|+\|\mathcal{D}_{\xi}^{1/2}(e^{-i(t-t')\xi|\xi|})\chi e^{i(t-t')\xi \eta^{2}}\|\\
&+\|e^{-i(t-t')\xi|\xi|}\mathcal{D}_{\xi}^{1/2}(\chi e^{i(t-t')\xi \eta^{2}} \partial_{\xi}^{2}\hat{z})\|)\\
\leq& \ c(\|x^{2}z\|+\|\chi(T^{1/4}+T^{1/2}|\xi|^{1/2})\|_{\infty}\|\partial_{\xi}^{2}\hat{z}\|+\|\mathcal{D}_{\xi}^{1/2}(e^{i(t-t')\xi \eta^{2}})\chi \partial_{\xi}^{2}\hat{z}\|\\
&+\|e^{i(t-t')\xi \eta^{2}}\mathcal{D}_{\xi}^{1/2}(\chi \partial_{\xi}^{2}\hat{z})\|)\\
\leq& \ c(\|x^{2}z\|+\|(\eta^{2}t)^{1/2}\chi\|_{\infty}\|\partial_{\xi}^{2}\hat{z}\|+\|\mathcal{D}_{\xi}^{1/2}(\chi)\|_{\infty}\|\partial_{\xi}^{2}\hat{z}\|+
\|\chi\|_{\infty}\|\mathcal{D}_{\xi}^{1/2}\partial_{\xi}^{2}\hat{z}\|)\\
\leq& \ c(\|x^{2}z\|+\|D_{\xi}^{1/2}\partial_{\xi}^{2}\hat{z}\|)\\
=&\ c(\|x^{2}z\|+\||x|^{2+1/2}z\|)\\
\leq& \ c(\|x^{2}uu_{x}\|+\||x|^{2+1/2}uu_{x}\|)\\
\leq&\ c(\|u_{x}\|_{\infty}\|x^{2}u\|+\||x|^{3/2}\partial_{x}u\|\|xu\|_{L^{\infty}})\\
\leq&\ c(\|u_{x}\|_{\infty}\|x^{2}u\|+\|xu\|_{\infty}\|u\|_{\mathcal{Z}_{4,2}}+\|\langle x,y \rangle^{1/2}u\|).
\end{split}
\end{equation*}
 As a consequence,
 $$
 \|D_7\|\leq c\|\|\bar{D}_{7}\|\|_{L^{1}_T}<\infty.
 $$
Since $\hat{z}(0)=0,$ we can estimate $D_1$ as follows. First, we notice that
\begin{equation}\label{d1}
\begin{split}
\|D_{\xi}^{1/2}(\sgn(\xi)\hat{z})\|=& \ \|\widehat{|x|^{1/2}\mathcal{H}z}\|\\
                                \leq & \ \|(1+|x|)^{1/2}\mathcal{H}z\|\\
                                \leq & \ \|(1+|x|)\mathcal{H}z\|\\
                                \leq & \ \|z\|+\|x\mathcal{H}z\|\\
                                \leq & \ \|z\|+\|\mathcal{H}(xz)\|\\
                                =& \ \|z\|+\|xz\|\\
                                \leq & \ c\|u\|_{\mathcal{Z}_{4,2}}.
\end{split}
\end{equation}
Let $$\bar{D}_1=D_{\xi}^{1/2}\Big(\chi
\big(e^{i(t-t')\xi(\eta^2-|\xi|)}((t-t')\mathrm{sgn}(\xi)\hat{z})\big)\Big).$$
Then Theorem \ref{stein}, Proposition \ref{Pontual}, Lemma \ref{P}, and
\eqref{d1} yield
\begin{equation*}
\begin{split}
\|\bar{D}_{1}\|=&\ \|D^{1/2}_{\xi}(\chi e^{i(t-t')\xi(\eta^{2}-|\xi|)}\sgn(\xi)\hat{z})\|\\
=&\ \|\|D^{1/2}_{\xi}(\chi e^{i(t-t')\xi(\eta^{2}-|\xi|)}\sgn(\xi)\hat{z})\|_{L^{2}_{\xi}}\|_{L^{2}_{\eta}}\\
 \leq & \ \|\|\chi e^{i(t-t')\xi(\eta^2-|\xi|)}\sgn(\xi)\hat{z}\|_{L^{2}_{\xi}}+\|\mathcal{D}_{\xi}^{1/2}(\chi e^{i(t-t')\xi(\eta^2-|\xi|)}\sgn(\xi)\hat{z})\|_{L^{2}_{\xi}}\|_{L^{2}_{\eta}}\\
\leq& \ c(\|\chi e^{i(t-t')\xi(\eta^2-|\xi|)}\sgn(\xi)\hat{z}\|+\|\mathcal{D}_{\xi}^{1/2}(\chi e^{i(t-t')\xi(\eta^2-|\xi|)}\sgn(\xi)\hat{z})\|)\\
\leq & \ c(\|z\|+\|\mathcal{D}_{\xi}^{1/2}(e^{-i(t-t')\xi|\xi|})\chi e^{i(t-t')\xi \eta^{2}}\sgn(\xi)\hat{z}\|\\
&+\|e^{-i(t-t')\xi|\xi|}\mathcal{D}_{\xi}^{1/2}(\chi e^{i(t-t')\xi \eta^{2}} \sgn(\xi)\hat{z})\|)\\
\leq& \ c(\|z\|+\|\chi(T^{1/4}+T^{1/2}|\xi|^{1/2})\|_{\infty}\|\sgn(\xi)\hat{z}\|+\|\mathcal{D}_{\xi}^{1/2}(e^{i(t-t')\xi \eta^{2}})\chi \sgn(\xi)\hat{z}\|\\
&+\|e^{i(t-t')\xi \eta^{2}}\mathcal{D}_{\xi}^{1/2}(\chi \sgn(\xi)\hat{z})\|)\\
\leq& \
c(\|z\|+\|(\eta^{2}t)^{1/2}\chi\|_{\infty}\|\sgn(\xi)\hat{z}\|+\|\mathcal{D}_{\xi}^{1/2}(\chi)\|_{\infty}\|\sgn(\xi)\hat{z}\|\\
&+
\|\chi\|_{\infty}\|\mathcal{D}_{\xi}^{1/2}(\sgn(\xi)\hat{z})\|)\\
\leq& \ c(\|z\|+\|D_{\xi}^{1/2}(\sgn(\xi)\hat{z})\|)\\
\leq & c\|u\|_{\mathcal{Z}_{4,2}}.
\end{split}
\end{equation*}
Therefore,
$$\|D_1\|\leq \|\bar{D}_1\|_{L^{1}_{T}}<\infty.$$

The other terms appearing in \eqref{teoP1b} are estimated in a very similar
manner.  Here, we also omit the details. Hence, the above estimates on the
linear and integral parts of \eqref{aftH5}, together with the  fact that
$u(t_2)\in \mathcal{Z}_{5,5/2}$, lead to concluding that
$$
B_1=-2it_2D^{1/2}_{\xi}(\chi
e^{it_2\xi(\eta^2-|\xi|)}\mathrm{sgn}(\xi)\hat{\phi})\in L^{2}(\R^2).
$$
Fubini's theorem then gives that $B_1\in L^{2}_{\xi}(\R), \mathrm{a.e.} \ \eta \in \R.$
So, in view of Theorem \ref{stein}, we deduce
\begin{equation}\label{teoP1e}
\mathcal{D}^{1/2}_{\xi}(\chi
e^{it_2\xi(\eta^2-|\xi|)}\mathrm{sgn}(\xi)\hat{\phi})\in
L_{\xi}^{2}(\R),\quad \mathrm{a.e.} \ \eta \in \R.
\end{equation}
An application of Proposition \ref{localint} gives
$$
\hat{\phi}(0,\eta)=0, \ \mathrm{a.e.} \  \eta \in \R.
$$
Since $\hat{\phi}$ is continuous we obtain $\hat{\phi}(0,\eta)=0$, for all
$\eta \in \R$. The conclusion of the theorem follows just taking a look at
\eqref{fourieru}.
\end{proof}

\begin{proof}[Proof of Theorem \ref{P2}]
Without loss of generality  we assume $t_1=0<t_2<t_3,$ and explore the
arguments in \cite{GermanPonce}. Indeed, by multiplying \eqref{121} by
$|x|^{7/2}$ and taking the Fourier transform, we obtain
\begin{equation}\label{teop2a}
D_{\xi}^{1/2}\partial_{\xi}^3\widehat{u(t)}=D_{\xi}^{1/2}F(t,\xi,\eta,\hat{\phi})-\int_{0}^{t}D_{\xi}^{1/2}F(t-t',\xi,\eta,\hat{z}(t'))dt',
\end{equation}
where
$F(t,\xi,\eta,\hat{\phi})=\partial_{\xi}^{3}(e^{it\xi(\eta^2-|\xi|)}\hat{\phi})$.
Thus, Plancherel's theorem evince that if we assume  that the right-hand side
of \eqref{teop2a} belongs to $L^{2}(\R^2)$, for times $t_1=0<t_2<t_3,$  then
we will obtain a contradiction. Here, as before
$z=\frac{1}{2}\partial_{x}u^2$.

First of all, we note that our assumptions together with Theorems \ref{B1}
and \ref{P1} implies that
$$
u\in C([0,T];\mathcal{\dot{Z}}_{s,r}), \qquad \frac{5}{2}\leq r<\frac{7}{2}.
$$
Moreover, a straightforward  computation reveals that
\begin{equation}\label{comput}
\begin{split}
\partial_{\xi}^{3}(e^{it\xi(\eta^2-|\xi|)}\hat{\phi})=&\ \Big((-4it\delta_{\xi}-it^{3}\eta^{6}-24t^{2} \xi +6t^{2} \eta^{2} \mathrm{sgn}(\xi)+8i t^{3} |\xi|^3  \\
& +6it^3 \eta^4 |\xi|-12it^3 \eta^2 \xi^2)\hat{\phi}+(-6it \mathrm{sgn}(\xi)-12t^2 \xi^2 +12 t^2 \eta^2 |\xi|\\
&-3t^2 \eta^4 )\partial_{\xi}\hat{\phi} + 3it(\eta^2 -
2|\xi|)\partial_{\xi}^{2}\hat{\phi}+\partial_{\xi}^{3}\hat{\phi}\Big)e^{it\xi(\eta^2-|\xi|)}.
\end{split}
\end{equation}
Here, $\delta_\xi$ stands for the Dirac delta function with respect to $\xi$,
that is, $\langle\delta_\xi,\varphi\rangle=\varphi(0,\eta)$, for all
$\varphi\in\mathcal{S}(\R^2)$.

The proof follows closely the arguments in Theorem \ref{P1}. Recall that
$\chi(\xi,\eta)=\tilde{\chi}(\xi)e^{-\eta^2}$, where $\tilde{\chi} \in
C_{0}^{\infty}(\R),$  $supp \ \tilde{\chi} \subset (-\epsilon,\epsilon)$ and
$\tilde{\chi} =1$ in $(-\epsilon/2,\epsilon/2).$ Hence, we may write
\begin{equation*}
\begin{split}
\chi D_{\xi}^{1/2}\partial_{\xi}^{3}(e^{it\xi(\eta^2-|\xi|)}\hat{\phi})=&\ [\chi;D_{\xi}^{1/2}]\partial_{\xi}^{3}(e^{it\xi(\eta^2-|\xi|)}\hat{\phi})+D_{\xi}^{1/2}(\chi\partial_{\xi}^{3}(e^{it\xi(\eta^2-|\xi|)}\hat{\phi}))\\
=& \ \tilde{A}+\tilde{B}.
\end{split}
\end{equation*}
To estimate the $L^2$ norm of $\tilde{A}$, we can proceed in a very similar
way to its counterpart $A$ in Theorem \ref{P1}. So, we omit the details.

Next, we observe that
\begin{equation}\label{compu}
\begin{split}
\tilde{B}=&D_{\xi}^{1/2}(\chi \partial_{\xi}^{3}(e^{it\xi(\eta^2-|\xi|)}\hat{\phi}))\\
=&\ \chi e^{it\xi(\eta^2-|\xi|)}\Big((-4it\delta_{\xi}-it^{3}\eta^{6}-24t^{2} \xi +6t^{2} \eta^{2} \mathrm{sgn}(\xi)+8i t^{3} |\xi|^3  \\
& +6it^3 \eta^4 |\xi|-12it^3 \eta^2 \xi^2)\hat{\phi}+(-6it \mathrm{sgn}(\xi)-12t^2 \xi^2 +12 t^2 \eta^2 |\xi|\\
&-3t^2 \eta^4 )\partial_{\xi}\hat{\phi} + 3it(\eta^2 -
2|\xi|)\partial_{\xi}^{2}\hat{\phi}+\partial_{\xi}^{3}\hat{\phi}\Big)\\
=&\tilde{B}_2+...+\tilde{B}_{14}.
\end{split}
\end{equation}
From our assumptions, Theorem \ref{P1} implies that the initial data  $\phi$
also belongs to $\dot{\mathcal{Z}}_{5,5/2}$. Thus, the first term involving
the Dirac function in \eqref{compu} must vanishes, that is, the term
$\tilde{B}_1$ does not appear in \eqref{compu}. To estimate $\tilde{B}_4$ we
use that $\hat{\phi}(0,\eta)=0.$ For shortness, we will estimate in details
only the most difficult terms, that is,  the terms $\tilde{B}_{2}$ and
$\tilde{B}_{14}$ which are the ones involving the highest regularity and
decay of the initial data. The other terms, except $\tilde{B}_{8}$, can be
estimated in a similar way.

From Theorem \ref{stein}, \eqref{Leib}, Proposition \ref{Pontual}, Lemma \ref{P},
and H\"older's inequality it follows that
\begin{equation*}
\begin{split}
\|\tilde{B}_2\|\leq& \ c(\|\chi e^{it\xi(\eta^2-|\xi|)} \eta^6 \hat{\phi}\|+\|\mathcal{D}_{\xi}^{1/2}(\chi e^{it\xi(\eta^2-|\xi|)}\eta^6 \hat{\phi})\|)\\
\leq& \ c(\|\phi\|+\|\mathcal{D}_{\xi}^{1/2}(e^{-it|\xi|\xi})\chi e^{it\xi \eta^2}\eta^6 \hat{\phi}\|+\|e^{-it\xi |\xi|}\mathcal{D}_{\xi}^{1/2}(\chi \eta^6 \hat{\phi})\|)\\
\leq& \ c(\|\phi\|+\|\mathcal{D}_{\xi}^{1/2}(e^{it\xi \eta^2})\chi \eta^6 \hat{\phi}\|+\|e^{it\xi \eta^2}\mathcal{D}_{\xi}^{1/2}(\chi e^{it\xi \eta^2} \eta^6 \hat{\phi})\|)\\
\leq& \ c(\|\phi\|+\|\mathcal{D}_{\xi}^{1/2}(\chi \eta^6)\hat{\phi}\|+\|\chi \eta^6 \mathcal{D}_{\xi}^{1/2}\hat{\phi}\|)\\
\leq & \ c(\|\phi\|+\|\mathcal{D}_{\xi}^{1/2}(\chi \eta^6)\|_{\infty}\|\hat{\phi}\|+\|\chi \eta^6\|_{\infty}\|\mathcal{D}_{\xi}^{1/2} \hat{\phi}\|)\\
\leq& \ c(\|\phi\|+\|D_{\xi}^{1/2}\hat{\phi}\|)\\
=&\ c(\|\phi\|+\||x|^{1/2}\phi\|).
\end{split}
\end{equation*}
Similarly,
\begin{equation*}
\begin{split}
\|\tilde{B}_{14}\|\leq& \ c(\|\chi e^{it\xi(\eta^2-|\xi|)} \partial_{\xi}^{3}\hat{\phi}\|+\|\mathcal{D}_{\xi}^{1/2}(\chi e^{it\xi(\eta^2-|\xi|)}\partial_{\xi}^{3}\hat{\phi})\|)\\
\leq& \ c(\|x^3 \phi\|+\|\mathcal{D}_{\xi}^{1/2}(e^{-it\xi |\xi|})\chi e^{it\xi \eta^2}\partial_{\xi}^3\hat{\phi}\|+\|e^{-it\xi |\xi|}\mathcal{D}_{\xi}^{1/2}(\chi e^{it\xi \eta^2}\partial_{\xi}^{3}\hat{\phi})\|)\\
\leq& \ c(\|x^3 \phi\|+\|\mathcal{D}_{\xi}^{1/2}(e^{it\xi \eta^2})\chi \partial_{\xi}^3 \hat{\phi}\|+\|e^{i\xi \eta^2}\mathcal{D}_{\xi}^{1/2}(\chi \partial_{\xi}^{3}\hat{\phi})\|)\\
\leq& \ c(\|x^3 \phi\|+\|\mathcal{D}_{\xi}^{1/2} \chi\|_{\infty}\|\partial_{\xi}^{3}\hat{\phi}\|+\|\chi\|_{\infty}\|D_{\xi}^{1/2}\partial_{\xi}^{3}\hat{\phi}\|)\\
\leq& \ c\|\langle x, y \rangle^{3+1/2}\phi\|.
\end{split}
\end{equation*}

Now, looking at the integral part we localize again near the origin in Fourier space and use a commutator to get
\begin{equation}\label{2005}
\begin{split}
\int_{0}^{t}[\chi;&D_{\xi}^{1/2}]\Big(\Big\{e^{i(t-t')\xi(\eta^2-|\xi|)}\Big[\big(-4i(t-t')\delta_{\xi}-i(t-t')^{3}\eta^{6}
 -24(t-t')^2 \xi \\
&+6(t-t')^{2}\eta^{2}\mathrm{sgn}(\xi)
+8i (t-t')^{3} |\xi|^3 +6i(t-t')^3 \eta^4 |\xi|
\\
&-12i(t-t')^3 \eta^2 \xi^2\big)\hat{z}+(-6i(t-t') \mathrm{sgn}(\xi)-12(t-t')^2 \xi^2 +12 (t-t')^2 \eta^2 |\xi| \\
&-3(t-t')^2 \eta^4 )\partial_{\xi}\hat{z} +3i(t-t')(\eta^2 - 2|\xi|)\partial_{\xi}^{2}\hat{z}+\partial_{\xi}^{3}\hat{z}\Big]\Big\}\\
&+D_{\xi}^{1/2}\Big\{\chi
e^{i(t-t')\xi(\eta^2-|\xi|)}\Big[\big(-4i(t-t')\delta_{\xi}-i(t-t')^{3}\eta^{6}
 -24(t-t')^2 \xi \\
&+6(t-t')^{2}\eta^{2}\mathrm{sgn}(\xi)
+8i (t-t')^{3} |\xi|^3 +6i(t-t')^3 \eta^4 |\xi|
\\
&-12i(t-t')^3 \eta^2 \xi^2\big)\hat{z}+(-6i(t-t') \mathrm{sgn}(\xi)-12(t-t')^2 \xi^2 +12 (t-t')^2 \eta^2 |\xi| \\
&-3(t-t')^2 \eta^4 )\partial_{\xi}\hat{z} +3i(t-t')(\eta^2 - 2|\xi|)\partial_{\xi}^{2}\hat{z}+\partial_{\xi}^{3}\hat{z}\Big]\Big\}\Big)dt'\\
=& \ \tilde{C}_1+...\tilde{C}_{14}+\tilde{D}_1 +...+\tilde{D}_{13}+\tilde{E},
\end{split}
\end{equation}
where
$$
\tilde{E}=-6i\int_{0}^{t}D_{\xi}^{1/2}(e^{i(t-t')\xi(\eta^2-|\xi|)}\chi (t-t')\mathrm{sgn}(\xi)\partial_{\xi}\hat{z})dt'.
$$
From $\hat{z}(0,\eta,t')=0$ we deduce that $\tilde{C}_1=0$ and
$\tilde{D}_1=0.$ The estimates for the terms
$\tilde{C}_2,...,\tilde{C}_{14}$, $\tilde{D}_2,...,\tilde{D}_{13}$ are
essentially the same ones as those for $C_1,...,C_7$, $D_1,...,D_7$ in
Theorem \ref{P1}. For example, to estimate the term $\tilde{C}_2$ we can use
Proposition \ref{C} to obtain
\begin{equation*}
\begin{split}
\|\tilde{C}_2\|\leq & \ t^3\|\|\|\chi\|_{H^{1}_{\xi}}\|e^{it\xi(\eta^2-|\xi|)}(t-t')^{2}\eta^6 \hat{z}\|_{L^{2}_{\xi}}\|_{L^{2}_{\eta}}\|_{L^{1}_{T}}\\
\leq & \ c\|\|\partial_{y}^{6}z\|\|_{L^{1}_{T}}\\
\leq & \ c \|\|\partial_{y}^{6}\partial_{x}u^2\|\|_{L^{1}_{T}}\\
\leq& \ c\|u\|_{L^{\infty}_{T} H^{7}}^{2}.
\end{split}
\end{equation*}
Let $t=t_2$, since $\phi, u(t_2)\in Z_{7,7/2}$, from  \eqref{2005} and the
previous estimates we conclude that
\begin{align*}
R(t)=&\ \tilde{B}_8 - \tilde{E}\\
=&\ 6i\int_{0}^{t}D_{\xi}^{1/2}(e^{i(t-t')\xi(\eta^2-|\xi|)}\chi (t-t')\mathrm{sgn}(\xi)\partial_{\xi}(\frac{i\xi}{2}\hat{u}\ast \hat{u}))dt'\\
&-6iD_{\xi}^{1/2}(e^{it\xi(\eta^2-|\xi|)}\chi t \mathrm{sgn}(\xi)\partial_{\xi}\hat{\phi})\\
=&\ 6i\int_{0}^{t}D_{\xi}^{1/2}(e^{i(t-t')\xi(\eta^2-|\xi|)}\chi (t-t')\mathrm{sgn}(\xi)(\partial_{\xi}\hat{z}(\xi,\eta,t')
-\partial_{\xi}\hat{z}(0,\eta,t')))dt'\\
&-6iD_{\xi}^{1/2}(e^{it\xi(\eta^2-|\xi|)} \chi t \mathrm{sgn}(\xi)(\partial_{\xi}\hat{\phi}(\xi,\eta)-\partial_{\xi}\hat{\phi}(0,\eta)))\\
&-6iD_{\xi}^{1/2}(e^{it\xi(\eta^2-|\xi|)} \chi t \mathrm{sgn}(\xi)\partial_{\xi}\hat{\phi}(0,\eta))\\
&+6i\int_{0}^{t}D_{\xi}^{1/2}(e^{i(t-t')\xi(\eta^2-|\xi|)}\chi (t-t')\mathrm{sgn}(\xi)\partial_{\xi}\hat{z}(0,\eta,t'))dt'\\
=&\ R_{1}(t)+R_{2}(t)+R_{3}(t)+R_{4}(t)\in L^{2}(\R^2).
\end{align*}
Let us check that $R_1 (t)\in L^{2}(\R^2)$. Indeed, let
$$ f(\xi,\eta,t')=e^{i(t-t')\xi(\eta^2-|\xi|)}\chi
(t-t')\mathrm{sgn}(\xi)g(\xi,\eta,t'),$$ where
$g(\xi,\eta,t')=\partial_{\xi}\hat{z}(\xi,\eta,t')-\partial_{\xi}\hat{z}(0,\eta,t').$
A simple computation gives us
$$
g=\frac{i}{2}(\widehat{u^2}+\xi \partial_{\xi}\widehat{u^2}-\widehat{u^2}(0,\eta,t')),\qquad
\partial_{\xi}g= i\partial_{\xi}\widehat{u^2}+\frac{i}{2}\xi \partial_{\xi}^2\widehat{u^2},
$$
and
$$\partial_{\eta}g=\frac{i}{2}(\partial_{\eta}\widehat{u^2}+\xi \partial_{\eta}\partial_{\xi}\widehat{u^2}-\partial_{\eta}\widehat{u^2}(0,\eta,t')).$$
Since $u^2 \in C([0,T];Z_{7,\frac{7}{2}-\epsilon})$, for any $0<\epsilon
<1/2$, it follows that $\widehat{u^2}\in
C([0,T];Z_{\frac{7}{2}-\epsilon,7}).$ Then, the identity $g\delta_\xi=0$ and
Sobolev embedding yield
$$\|f\|\leq c(\|\chi\|\|\widehat{u^2}\|_{\infty}+\|\xi \chi\|\|\partial_{\xi}\widehat{u^2}\|_{\infty})<\infty,$$
\begin{equation*}
\begin{split}
\|\partial_{\xi}f\|\leq& \ c(\|(\eta^2 +2|\xi|)\chi\|\|\widehat{u^2}\|_{\infty}+\|(\eta^2 +2|\xi|)\xi \chi\|\|\partial_{\xi}\widehat{u^2}\|_{\infty}+\|\partial_{\xi}\chi\|\|\widehat{u^2}\|_{\infty}\\
&+\|\xi
\partial_{\xi}\chi\|\|\partial_{\xi}\widehat{u^2}\|_{\infty}+\|\chi\|\|\partial_{\xi}\widehat{u^2}\|_{\infty}+\|\xi
\chi\|\|\partial_{\xi}^{2}\widehat{u^2}\|_{\infty})<\infty,
\end{split}
\end{equation*}
and
\begin{equation*}
\begin{split}
\|\partial_{\eta}f\|\leq& \ c(\|\xi\eta
\chi\|\|\widehat{u^2}\|_{\infty}+\|\xi^2 \eta
\chi\|\|\partial_{\xi}\widehat{u^2}\|_{\infty}+\|\partial_{\eta}\chi\|\|\widehat{u^2}\|_{\infty}
+\|\xi\partial_{\eta}\chi\|\|\partial_{\xi}\widehat{u^2}\|_{\infty}\\
&+\|\xi\partial_{\eta}\chi\|\|\partial_{\xi}\widehat{u^2}\|_{\infty}+
\|\chi\|\|\partial_{\eta}\widehat{u^2}\|_{\infty}+\|\xi
\chi\|\|\partial_{\eta}\partial_{\xi}\widehat{u^2}\|_{\infty})<\infty.
\end{split}
\end{equation*}
Therefore $f(\cdot,\cdot,t')\in H^{1}(\R^2),$ for all $t'\in [0,t].$ It is
now an easy consequence to show that $D_{\xi}^{1/2}f\in
C([0,T];L^{2}(\R^2)).$

A similarly analysis leads to $R_{2}(t)\in L^{2}(\R^2)$. Therefore $R_3 +R_4
\in L^{2}(\R^2).$

Note that
$$
\partial_{\xi}(\frac{i\xi}{2}\hat{u}\ast \hat{u})(0,\eta,t')=\frac{i}{2}\int e^{-i\eta
y}u^{2}(x,y,t')dxdy.
$$
Also, from \eqref{bozk}, we get
\begin{equation}\label{124}
\frac{d}{dt'}\int xe^{-i\eta y}u(x,y,t')dxdy=\frac{1}{2}\int e^{-i\eta
y}u^{2}(x,y,t')dxdy, \forall \eta \in \R,
\end{equation}
which implies the identity
\begin{equation}\label{131}
\partial_{\xi}(\frac{i\xi}{2}\hat{u}\ast \hat{u})(0,\eta,t')=i\frac{d}{dt'}\int xe^{-i\eta y}u(x,y,t')dxdy.
\end{equation}
Substituting \eqref{131} into $R_4$ and integrating by parts, we obtain
\begin{equation*}
\begin{split}
R_4(t)=&\ 6i\int_{0}^{t}D_{\xi}^{1/2}(e^{i(t-t')\xi(\eta^2-|\xi|)}\chi (t-t')\mathrm{sgn}(\xi))(i\frac{d}{dt'}\int xe^{-i\eta y}u(x,y,t')dxdy)dt'\\
=&\ -6D_{\xi}^{1/2}(e^{i(t-t')\xi(\eta^2-|\xi|)}\chi (t-t')\mathrm{sgn}(\xi))\int xe^{-i\eta y}u(x,y,t')|_{t'=0}^{t'=t}+\\
&+6\int_{0}^{t}D_{\xi}^{1/2}((i\xi|\xi|-i\xi\eta^2)e^{i(t-t')\xi(\eta^2-|\xi|)}\chi (t-t')\mathrm{sgn}(\xi))\int xe^{-i\eta y}udxdydt'\\
&-6\int_{0}^{t}D_{\xi}^{1/2}(e^{i(t-t')\xi(\eta^2-|\xi|)}\chi \mathrm{sgn}(\xi)\int xe^{-i\eta y}u(x,y,t')dxdy)dt'\\
=&\ 6D_{\xi}^{1/2}(e^{it\xi(\eta^2-|\xi|)}\chi t \mathrm{sgn}(\xi)\int xe^{-i\eta y}\phi(x,y)dxdy)\\
&+6\int_{0}^{t}D_{\xi}^{1/2}((i\xi |\xi|-i\xi \eta^2)e^{i(t-t')\xi(\eta^2-|\xi|)}\chi \mathrm{sgn}(\xi))\int x e^{-i\eta y}u(x,y,t')dxdydt'\\
&-6\int_{0}^{t}D_{\xi}^{1/2}(e^{i(t-t')\xi(\eta^2-|\xi|)}\chi \mathrm{sgn}(\xi))\int xe^{-i\eta y} u(x,y,t')dxdydt'\\
=&\ -R_3 +R_5+R_6,
\end{split}
\end{equation*}
where above we used the identity
$$
\partial_{\xi}\hat{\phi}(0,\eta)=-i\int xe^{-i\eta y}\phi(x,y)dxdy.
$$
Thus,
$$
R=R_1 +R_2 +R_5+R_6.
$$
Similarly to $R_{1}(t)$ we can show that $R_5\in L^{2}(\R^2)$. Hence,
\begin{equation*}
\begin{split}
R_6(t)=& \ 6\int_{0}^{t}D_{\xi}^{1/2}\Big(e^{i(t-t')\xi(\eta^2-|\xi|)}\chi \mathrm{sgn}(\xi)\int xe^{-i\eta y} u(x,y,t')dxdy\Big)dt'\\
=&\ 6D_{\xi}^{1/2}\int_{0}^{t}\Big(e^{i(t-t')\xi(\eta^2-|\xi|)}\chi \mathrm{sgn}(\xi)\int xe^{-i\eta y} u(x,y,t')dxdy\Big)dt'\in L^{2}(\R^2).
\end{split}
\end{equation*}
Fubini's theorem gives $R_6(t)\in L^{2}_{\xi}(\R),$ $\mathrm{a.e.}$
$\eta \in \R.$
Theorem \ref{stein} then yields
\begin{equation*}
\mathcal{D}_{\xi}^{1/2}\left(\chi \mathrm{sgn}(\xi)\int_{0}^{t}
\big(e^{i(t-t')\xi(\eta^2-|\xi|)}\int xe^{-i\eta y}
u(x,y,t')dxdy\big)dt'\right)\in L^{2}_{\xi}(\R), \mathrm{a.e.} \eta\in \R,
\end{equation*}
 which from Proposition \ref{localint} implies that
\begin{equation*}
0=\int_{0}^{t_2}\left(\int xe^{-i\eta y}u(x,y,t')dxdy\right)dt'=:g(\eta) \
\mathrm{a.e.} \ \eta\in \R.
\end{equation*}
Since $g$ is a continuous function
\begin{equation*}
g(0)=\int_{0}^{t_2}\int xu(x,y,t')dxdydt'=0.
\end{equation*}
 By  Rolle's lemma, there exists $\tau_1\in (0,t_2)$ such that
\begin{equation}\label{123}
\int xu(x,y,\tau_1)dxdy=0.
\end{equation}
Analogously, using that \ $u(t_2),u(t_3)\in Z_{7,7/2}$ \ we can show the
existence of $\tau_2 \in (t_2,t_3)$ such that
\begin{equation}\label{122}
\int xu(x,y,\tau_2)dxdy=0.
\end{equation}
Finally, from \eqref{123}, \eqref{122}, \eqref{124} (with $\eta=0)$, and the
fact that the $L^2$ norm of $u$ is conserved, we conclude $\|\phi\|=0$. The
uniqueness os solutions  then implies the desired.

Thus, we complete the proof of the theorem.
\end{proof}

\section*{Acknowledgement}

A.P. is partially supported by CNPq-Brazil under grant 301535/2010-8.


\bibliographystyle{mrl}

\end{document}